\newtheorem{theorem}{Theorem}[section]
\newtheorem*{theorem1}{Theorem \ref{thm:main1}}
\newtheorem*{theorem2}{Theorem \ref{thm:main2}}
\newtheorem{proposition}[theorem]{Proposition}
\newtheorem{lemma}[theorem]{Lemma}
\newtheorem{corollary}[theorem]{Corollary}
\numberwithin{equation}{section}
\theoremstyle{definition}
\newtheorem{definition}[theorem]{Definition}
\newtheorem{example}[theorem]{Example}
\newtheorem{question}[theorem]{Question}
\newtheorem{bquestion}[theorem]{Bondal's Question}
\theoremstyle{remark}
\newtheorem{remark}[theorem]{Remark}
\newcommand{\Db}{{\rm D}^{\rm b}}
\newcommand{\Pic}{{\rm Pic}}
\newcommand{\rk}{{\rm rk}}
\newcommand{\kEnd}{{\cal{E}nd}}
\newcommand{\Hom}{{\rm Hom}}
\renewcommand{\dim}{{\rm dim}\,}
\newcommand{\linedef}[1]{\emph{#1}}
\newcommand{\category}[1]{\mathsf{#1}}
\newcommand{\Coh}{\category{Coh}}
\newcommand{\coker}{{\rm coker}}
\newcommand{\dual}{^{\vee}}
\newcommand{\cat}[1]{\begin{bf}#1\end{bf}}
\newcommand{\tensor}{\otimes}
\newcommand{\isom}{\cong}
\newcommand{\cali}{\mathcal}
\newcommand{\cal}{\mathscr}
\newcommand{\ka}{{\cal A}}
\newcommand{\kb}{{\cal B}}
\newcommand{\ke}{{\cal E}}
\newcommand{\kf}{{\cal F}}
\newcommand{\kk}{{\cal K}}
\newcommand{\kl}{{\cal L}}
\newcommand{\ko}{{\cal O}}
\newcommand{\kq}{{\cal Q}}
\newcommand{\kr}{{\cal R}}
\newcommand{\ks}{{\cal S}}
\newcommand{\kt}{{\cal T}}
\newcommand{\ku}{{\cal U}}
\newcommand{\kv}{{\cal V}}
\newcommand{\kx}{{\cal X}}
\newcommand{\ky}{{\cal Y}}
\newcommand{\kw}{{\cal W}}
\newcommand{\kz}{{\cal Z}}
\newcommand{\VV}{\mathbb{V}}
\newcommand{\ZZ}{\mathbb{Z}}
\newcommand{\PP}{\mathbb{P}}
\newcommand{\GG}{\mathbb{G}}
\newcommand{\GGG}{\mathbb{G}}
\newcommand{\zr}{\kx^r_{m,n}}
\newcommand{\Zr}{\kz^r_{m,n}}
\newcommand{\yr}{\ky^r_{m,n}}
\begin{document}

\title[HPD for determinantal varieties]{Homological projective duality for determinantal varieties}

\author{Marcello Bernardara}
\address{Institut de Math\'ematiques de Toulouse \\ 
Universit\'e Paul Sabatier \\ 
118 route de Narbonne \\
31062 Toulouse Cedex 9\\ 
France}
\email{marcello.bernardara@math.univ-toulouse.fr}

\author{Michele Bolognesi}
\address{Institut Montpellierain Alexander Grothendieck \\ 
	Universit\'e de Montpellier \\ 
	Case Courrier 051 - Place Eug\`ene Bataillon \\ 
	34095 Montpellier Cedex 5 \\ 
	France}
\email{michele.bolognesi@umontpellier.fr}

\author{Daniele Faenzi}
\address{Universit\'e de Bourgogne\\
Institut de Math\'ematiques de Bourgogne\\
UMR CNRS 5584\\
UFR Sciences et Techniques -- B\^atiment Mirande -- Bureau 310\\
9 Avenue Alain Savary \\ 
BP 47870 21078 Dijon Cedex \\ 
France}
\email{daniele.faenzi@u-bourgogne.fr}

\thanks{D. F. partially supported by GEOLMI ANR-11-BS03-0011}

\maketitle

\begin{abstract}
In this paper we prove Homological Projective Duality for categorical resolutions of several classes of linear determinantal varieties. By this we mean varieties that are cut out by the minors
of a given rank of a $m\times n$ matrix of linear forms on a given
projective space. As applications, we obtain pairs of
derived-equivalent Calabi-Yau manifolds, and address a question by A. Bondal asking whether the derived category of any smooth projective variety can be fully faithfully embedded in the derived category of a smooth Fano variety. Moreover we discuss the relation between rationality and categorical representability in codimension two for determinantal varieties.
\end{abstract}

\section{Introduction}

Homological Projective Duality (HPD) is one of the most exciting
recent breakthroughs in homological algebra and algebraic geometry. It
was introduced by A. Kuznetsov
in \cite{kuznetsov:hpd} and its goal is to generalize classical projective duality to a homological framework. One of the important features of HPD is that it offers
a very important tool to study the bounded derived category of
a projective variety together with its linear sections, providing interesting semiorthogonal decompositions as well as derived equivalences,
cf. \cite{kuznetsov:v14,kuz:4fold,kuznetsov:quadrics,auel-berna-bolo,kuz:icm}.

\medskip 

Roughly speaking, two (smooth) varieties $X$ and $Y$ are HP-dual if 
$X$ has an ample line bundle $\ko_X(1)$ giving a map $X \to \PP W$, $Y$ has an ample line bundle $\ko_Y(1)$ giving a map $Y \to \PP W\dual$,
and $X$ and $Y$ have dual semiorthogonal decompositions (called \it Lefschetz \rm decompositions) compatible with the projective embedding. In this case, given a 
generic linear subspace $L \subset W$ and its orthogonal $L^\perp \subset W\dual$, one can consider the linear  sections $X_L$ and $Y_L$ of $X$ and $Y$ respectively. 
Kuznetsov shows the existence of a category $\cat{C}_L$ which is
admissible both in $\Db(X_L)$ and in $\Db(Y_L)$, and whose orthogonal complement
is given by some of the components of the Lefschetz decompositions of $\Db(X)$ and $\Db(Y)$ respectively. That is, both $\Db(X_L)$ and $\Db(Y_L)$ admit
a semiorthogonal decomposition by a ``Lefschetz'' component, obtained
via iterated hyperplane sections, and a common ``nontrivial'' or ``primitive'' component.

HPD is closely related to classical projective duality: \cite[Theorem 7.9]{kuznetsov:hpd} states that the critical locus of the
map $Y \to \PP W\dual$ coincides with the classical projective dual of $X$. The main technical issue of this fact is that one has to take
into account singular varieties, since the projective dual of a smooth variety is seldom smooth - \it e.g. \rm the dual of certain Grassmannians are singular Pfaffian varieties \cite{boris-cald:pfaff}. On the other hand, derived (dg-enhanced)
categories should provide a so-called \it categorical \rm or \it non-commutative \rm resolution of singularities (\cite{kuznet:singul,vdb:non-comm-resos}).
Roughly speaking, one needs to find a sheaf of $\ko_Y$-(dg)-algebras $\kr$ such that the category $\Db(Y,\kr)$ of bounded complexes of coherent $\kr$-modules
is proper, smooth and $\kr$ is locally Morita-equivalent to some matrix algebra over $\ko_Y$ (this latter condition translates the fact that
the resolution is birational).
In the case where $Y$ is singular, one of the most difficult
tasks in proving HPD is to provide such a resolution with the required Lefschetz decomposition (for example, see \cite[\S 4.7]{kuz:icm}).
On the other hand, given a non-smooth variety, it is a very interesting question to provide such resolutions and study their properties
such as crepancy, minimality and so forth.

\medskip

The main application of HPD is that it is a direct method to produce semiorthogonal decompositions 
for projective varieties with non-trivial canonical sheaf, and derived equivalences for Calabi-Yau varieties. The importance of this application is due to the fact 
that determining whether a given variety admits or not a semiorthogonal decomposition is a very hard problem in general. Notice that there are cases where it is 
known that the answer to this question is 
negative, for example if $X$ has trivial
canonical bundle \cite[Ex. 3.2]{bridg-equiv-and-FM}, or if $X$ is a curve of positive genus \cite{okawa-curves}. On the other hand, if $X$ is Fano, then any line bundle is exceptional and gives then 
a semiorthogonal decomposition. Almost all the known cases of semiorthogonal decompositions of Fano varieties described in the literature
(see, \it e.g.\rm, \cite{kuznetsov:v14,kuz:fano,kuz:4fold,bolo_berna:conic,auel-berna-bolo}) can be obtained via HPD or its relative version.

\medskip

Derived equivalences of Calabi-Yau (CY for short) varieties have deep geometrical insight. First of all, it was shown by Bridgeland that birational CY-threefolds
are derived equivalent \cite{bridg-flop}. The converse in not true: the first example - that has been shown to be also a consequence of HPD in  \cite{kuznet-grass} - was
displayed by Borisov and Caldararu in \cite{boris-cald:pfaff}.

Besides their geometric relevance, derived equivalences between CY varieties play an important role in theoretical physics. 
First of all, Kontsevich's homological mirror symmetry 
conjectures an equivalence between the bounded derived category of a CY-threefold $X$ and the Fukaya category of its mirror. More recently, 
it has been conjectured that homological projective duality should be realized physically as phases of abelian \it gauged linear sigma models \rm (GLSM)
(see \cite{herbst-hori-page:phases}
and \cite{viennacircle:git}).

As an example, denote by $X$ and $Y$ the pair of equivalent CY--threefolds considered by Borisov and Caldararu.
R\o dland \cite{rodlando} argued that the families of $X$'s and $Y$'s (letting the linear section move in the ambient space) seem to have the same mirror 
variety $Z$ (a more string theoretical argument has been given recently by Hori and Tong \cite{hori-tong}).
The equivalence between $X$ and $Y$ would then fit Kontsevich's Homological
Mirror Symmetry conjecture via the Fukaya category of $Z$. It is thus fair to say that HPD plays an important role in understanding these questions and potentially
providing new examples. Notice in particular that some determinantal cases were considered in \cite{vivalafisica}.

\medskip

In this paper, we describe new families of HP Dual varieties. We consider two vector spaces $U$ and $V$ of dimension
$m$ and $n$ respectively with $m \leq n$. Let $\GGG=\GG(U,r)$ denote
the Grassmannian of $r$-dimensional quotients of $U$, set $\kq$ and $\ku$
for the universal quotient and sub-bundle respectively. Let $\kx := \PP(V
\tensor \kq)$ and $\ky := \PP(V \dual \tensor \ku^\vee)$,
for any $0 < r < m$.
Let $p:\kx \to \GGG$ and $q:\ky\to \GGG$ be
the natural projections. Set $H_X$ and $H_Y$ for the relatively ample
tautological divisors on $\kx$ and $\ky$.
Orlov's result \cite{orlov:proj_bundles}
provides semiorthogonal decompositions
\begin{equation}\label{eq:the-dual-lefsch-decos}
\begin{array}{l}
\Db(\kx) = \langle p^*\Db(\GGG), \ldots, p^*\Db(\GGG) \otimes \ko_\kx((r n-1)H_X) \rangle, \\
\\
\Db(\ky) = \langle q^*\Db(\GGG)  \otimes \ko_\ky(((r-m)n+1)H_Y), \ldots, q^*\Db(\GGG) \rangle.
\end{array}
\end{equation}

\begin{theorem1}
In the previous notation, $\kx$ and $\ky$ with Lefschetz decompositions \eqref{eq:the-dual-lefsch-decos} are HP-dual.
\end{theorem1}

The proof of the previous result is a consequence of Kuznetsov's HPD for projective bundles generated by global sections (see \cite[\S 8]{kuznetsov:hpd}). 
Here, the spaces of global sections of $\ko_\kx(H_X)$ and $\ko_\ky(H_Y)$
sheaves are, respectively, $W=V \otimes U$ and $W\dual=V\dual \otimes U\dual$.

The main interest of Theorem \ref{thm:main1} is that $\kx$ is known to
be the resolution of the variety $\kz^r$ of $m\times n$ matrices of rank at most $r$. Write
such a matrix as $M: U \to V\dual$.
Then $\kz^r$ is naturally a subvariety of $\PP W$, which is singular
in general, with resolution $f: \kx \to \kz^r$. Dually, $g:\ky \to
\kz^{m-r}$ is a desingularization of the variety of $m\times n$
matrices of corank at least $r$. 
Theorem \ref{thm:main1} provides the categorical framework to describe HPD between the classical projectively dual varieties
$\kz^r$ and $\kz^{m-r}$ (see, \it e.g.\rm, \cite{weyman-book}).

In the affine case, categorical resolutions for determinantal
varieties have been constructed by Buchweitz, Leuschke and van den Bergh \cite{buch-leu-vdbergh1,buch-leu-vdbergh2}. Such resolution is
crepant if $m=n$ (that is, in the case where $\kz^r$ has Gorenstein singularities). The starting point is Kapranov's construction of a full strong
exceptional collection on Grassmannians \cite{kapra-grassa}. One can use the decompositions  in exceptional objects \eqref{eq:the-dual-lefsch-decos} to produce
a sheaf of algebras $\kr'$ and a categorical
resolution of singularities $\Db(\kz^r,\kr') \simeq \Db(\kx)$.
For simplicity, we will denote by $\kr'$ the algebra on any of the
determinantal varieties $\kz^r$ (forgetting
about the dependence of $\kr'$ on the rank $r$). 
This gives a geometrically deeper version of Theorem \ref{thm:main1}.

\begin{theorem2}
In the previous notations, $\kz^r$ admits a categorical resolution of singularities $\Db(\kz^r,\kr')$, which is crepant
if $m=n$. Moreover,
$\Db(\kz^r,\kr')$ and $\Db(\kz^{m-r},\kr')$ are HP-dual.
\end{theorem2}

Once the equivalence $\Db(\kz^r,\kr') \simeq \Db(\kx)$ constructed, Theorem \ref{thm:main2}
is proved by applying directly Theorem \ref{thm:main1}. However, the geometric relevance of Theorem
\ref{thm:main2}, and its difference with Theorem \ref{thm:main1}, is that it shows HPD
directly on noncommutative structures over the determinantal varieties $\kz^r$ and $\kz^{m-r}$ with respect to their
natural embedding in $\PP W$ and $\PP W ^\vee$ respectively. That is, these natural
smooth and proper noncommutative scheme structures are well-behaved with
respect to projective duality and hyperplane sections. Finally, notice that whenever we pick
a smooth linear section $Z_L$ of $\kz^r$ (or a smooth section $Z^L$ of $\kz^{m-r}$), the restriction to $Z_L$ of the sheaf $\kr'$
is Morita-equivalent to $\ko_{Z_L}$, so that we get the derived category $\Db(Z_L)$ of the section itself.

As a consequence, given a matrix of linear forms on some projective space,
one can see the locus $Z$  where the matrix
has rank at most $r$ as a
linear section of $\kz^r$.
Assuming  $Z$ to have expected dimension, Theorem \ref{thm:main2} gives a categorical
resolution of singularities $\Db(Z,\kr')$ of $Z$ and
a semiorthogonal decomposition of this category involving the dual linear section of $\kz^{m-r}$.

Our construction of Homological Projective Duality allows us to recover some Calabi-Yau equivalences appeared in \cite{vivalafisica} and many more 
(see Corollary \ref{cor:functors}).

\medskip 

A special case is obtained by setting $r=1$. In this case $\kx$ is a
Segre variety and $\ky$ is the variety of degenerate matrices rank.

\medskip 

As an application of this new instance of Homological Projective
Duality, we try to address a fascinating question, asked by A. Bondal in Tokyo in 2011. 
Since any Fano variety admits semiorthogonal decompositions, it is natural to ask whether the derived category of any variety can be realized as a component 
of a semiorthogonal decomposition of a Fano variety. Under this perspective, considering Fano varieties will be enough to
study all ``geometric'' triangulated categories.

\begin{bquestion}\label{qu:bondal}
Let $X$ be a smooth and projective variety. Is there any smooth Fano variety $Y$ together with a full and faithful functor $\Db(X) \to \Db(Y)$?
\end{bquestion}

We will say that $X$ is \it Fano-visitor \rm if Question \ref{qu:bondal}
has a positive answer (see Definition \ref{host}).

\medskip 

On the other hand, an interesting geometrical insight of semiorthogonal decompositions is to provide a conjectural obstruction to rationality
of a given variety $X$. 
In \cite{bolognesi_bernardara:representability}, the first and second named authors introduced , based on existence of semiorthogonal decompositions, the notion of \it categorical representability \rm of a 
variety $X$ (see Definition \ref{def-cat-rep}). This notion allows to formulate a natural question about categorical obstructions
to rationality.

\begin{question}\label{qu:catrep=ration}
Is a rational projective variety always categorically representable in codimension at least 2? 
\end{question}

The motivating ideas of question \ref{qu:catrep=ration} can be traced back
to the work of Bondal and Orlov, and to their address at the 2002 ICM \cite{bondal_orlov:ICM2002}, and to Kuznetsov's
remarkable contributions (e.g. \cite{kuz:4fold} or \cite{kuz:rationality-report}).
Notice that a projective space is representable
in dimension 0. Roughly speaking, the idea supporting Question \ref{qu:catrep=ration}
is based on a motivic argument which let us suppose that birational transformations should not add components representable
codimension 1 or less (see also \cite{bolognesi_bernardara:representability}).

Several examples seem to suggest that Question \ref{qu:catrep=ration} may have a positive answer.
Let us mention conic bundles over minimal surfaces \cite{bolo_berna:conic}, fibrations in intersections of quadrics \cite{auel-berna-bolo}, 
or some classes of cubic fourfolds \cite{kuz:4fold}. Moreover, Question \ref{qu:catrep=ration}
is equivalent to one implication of Kuznetsov Conjecture on the rationality of a cubic fourfold \cite{kuz:4fold}, which was proved to
coincide with Hodge theoretical expectations for a general cubic fourfold by Addington and Thomas \cite{add-thomas}.

As consequences of Theorems \ref{thm:main1} and \ref{thm:main2}, we can show that (the categorical resolution of singularities of)
any determinantal hypersurface of general type is Fano visitor (\S \ref{sect:segre-det}), and that (the categorical resolution of singularities of)
a rational determinantal variety is categorically representable in codimension at least two (\S \ref{sect:fano}).
Hence we provide a large family of varieties for which Questions \ref{qu:bondal} and \ref{qu:catrep=ration} have positive answer.
As an example, we easily get the following corollary (compare with Example \ref{ex:plane-cves}).

\begin{corollary}
A smooth plane curve is Fano visitor.
\end{corollary}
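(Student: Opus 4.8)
The plan is to present the curve as a linear section of a determinantal variety and to extract a Fano host from the homological projective dual supplied by Theorem \ref{thm:main2}. Let $C\subset\PP^2$ be a smooth plane curve of degree $d$. If $d\le 2$ then $C$ is rational and the statement is immediate, since $\PP^1$ is itself Fano, so assume $d\ge 3$. The starting point is the classical fact that $C$ admits a linear determinantal representation: its defining equation equals $\det M$ for a $d\times d$ matrix $M$ of linear forms on $\PP^2$, and such representations vary in a family of dimension $g=\binom{d-1}{2}$ parametrised by suitable line bundles on $C$. Taking $U=V=\CC^d$ and $W=U\tensor V$, this exhibits $C$ as the linear section $\kz^{d-1}\cap\PP(L^\perp)$, where $\kz^{d-1}\subset\PP W$ is the determinant hypersurface of $d\times d$ matrices of rank at most $d-1$ and $\PP(L^\perp)=\PP^2$ is the plane spanned by the entries of $M$; thus $\dim L^\perp=3$ and $\dim L=d^2-3$.

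Next I would apply Theorem \ref{thm:main2}, which makes $\Db(\kz^{d-1},\kr')$ and $\Db(\kz^{1},\kr')$ homologically projectively dual, where $\kz^{1}\subset\PP W\dual$ is the Segre variety $\PP^{d-1}\times\PP^{d-1}$. By \eqref{eq:the-dual-lefsch-decos} the Lefschetz length of $\kz^{d-1}$ equals $(d-1)d$, and since $\dim L=d^2-3\ge d^2-d$ for $d\ge 3$, Kuznetsov's theorem leaves no residual Lefschetz block on the $\kz^{d-1}$ side; hence the primitive component fills the whole section, $\Db(C)=\cat{C}_L$ (compatibly with the fact that $\Db(C)$ is indecomposable once $g(C)\ge 1$). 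On the dual side the section is $Z^L:=\kz^{1}\cap\PP(L)$, a codimension-three linear section of the Segre variety, and the duality yields a fully faithful admissible embedding $\cat{C}_L\hookrightarrow\Db(Z^L,\kr')$. Since $-K_{\PP^{d-1}\times\PP^{d-1}}=\ko(d,d)$ while the Segre hyperplane class is $\ko(1,1)$, adjunction along three hyperplanes gives $-K_{Z^L}=\ko(d-3,d-3)|_{Z^L}$, which is ample exactly for $d\ge 4$. For such $d$ the variety $Z^L$ has dimension $2d-5\ge 3$ and is Fano; if moreover $\PP(L)$ is transverse to the (smooth) Segre variety, then $Z^L$ is smooth, $\kr'|_{Z^L}$ is Morita-trivial, and we obtain $\Db(C)\hookrightarrow\Db(Z^L)$ with $Z^L$ a smooth Fano. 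This settles the cases $d\ge 4$, which are precisely the plane curves of general type.

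Two points remain. The genus-one case $d=3$ escapes the construction, because then $-K_{Z^L}=\ko_{Z^L}$ and the dual section is itself a Calabi--Yau elliptic curve rather than a Fano; I would treat it separately, for instance by embedding the elliptic curve in $\PP^3$ as a quartic and invoking Orlov's blow-up formula for the (Fano) blow-up of $\PP^3$ along it, which exhibits $\Db(C)$ as a component of the derived category of a Fano threefold. The main genuine obstacle in the principal range $d\ge 4$ is the transversality needed for smoothness of the host: one must check that, inside the positive-dimensional family of determinantal representations of $C$, a general member produces a plane $\PP(L)$ meeting the Segre variety transversally, so that $Z^L$ is an honest smooth Fano and not merely a noncommutative one. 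I expect this to follow from a Bertini-type argument once one verifies that varying the representation moves $\PP(L)$ off the proper non-transverse locus.
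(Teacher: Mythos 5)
Your argument is correct and takes essentially the same route as the paper: writing $C$ as the determinant of a $d\times d$ matrix of linear forms and applying the Segre--determinantal HPD of Theorems \ref{thm:main1} and \ref{thm:main2} with $c=3<d$ (Corollary \ref{cor:functors}, Table \ref{table:r=1}) embeds $\Db(C)$ into the derived category of a codimension-three linear section of $\PP^{d-1}\times\PP^{d-1}$, which by the adjunction computation is a smooth Fano of dimension $2d-5$ precisely for $d\ge 4$, with the cubic case handled separately by a Fano blow-up of $\PP^3$ and Orlov's formula. The only deviations are minor: for $d=3$ the paper blows up $\PP^3$ along the plane cubic itself (citing \cite{blanc-lamy}) rather than along a re-embedded elliptic quartic, and the transversality of the dual linear section that you rightly flag as the remaining obstacle is simply subsumed in the paper under the word ``generic'' in Corollary \ref{cor:determinantal-Fanovis}, so on that point your proposal is, if anything, more careful than the original.
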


\subsection*{Acknowledgments} We acknowledge A. Bondal for asking Question \ref{qu:bondal} at the conference ``Derived Categories 2011'' in Tokyo, which
has been a source of inspiration for this work. We thank J. Rennemo for pointing out a mistake in the first version
of this paper, and the anonymous referee for useful suggestions and questioning.
We are grateful to A. Kuznetsov, N. Addington and E. Segal for useful advises and exchange of ideas.

\section{Preliminaries}

\subsection{Notation}
We work over an algebraically closed field of characteristic zero $k$.
A vector space will be denoted by  a capital letter $W$; 
the dual vector space is denoted by $W^\vee$. Suppose $\dim (W)=N$, then the projective space of $W$ is denoted by $\PP W$ or simply by $\PP^{N-1}$.
We follow Grothendieck's convention, so that $\PP W$ is the set of
hyperplanes through the origin of $W$. The dual projective space is denoted by $\PP W ^\vee$ or by
$(\PP^{N-1})^\vee$.

\medskip

We assume the reader to be familiar with the theory of semiorthogonal 
decompositions and exceptional objects (see
\cite{bondal_orlov:semiorthogonal,huybrechts:libro,kuz:icm}).
Recall that
the summands of a semiorthogonal decomposition of a triangulated
category $\mathbf{T}$, by definition
are full triangulated subcategories of $\mathbf{T}$ which are
admissible, i. e. such that the inclusion admits a left and right adjoint.

\subsection{Categorical resolutions of singularities}
By a \linedef{noncommutative scheme} we mean (following Kuznetsov
\cite[\S2.1]{kuznetsov:quadrics}) a scheme $X$ together with a
coherent $\ko_{X}$-algebra $\ka$.  Morphisms are defined accordingly.
By definition, a noncommutative scheme $(X,\ka)$ has $\Coh(X, \ka)$, the category of coherent $\ka$-modules, as
category of coherent sheaves and $\Db(X,\ka)$ as bounded derived
category.

Following Bondal--Orlov
\cite[\S5]{bondal_orlov:ICM2002}, a \it categorical \rm(or \it noncommutative\rm) \it resolution of
singularities \rm $(X,\ka)$ of a possibly singular proper scheme $X$ is a torsion
free $\ko_X$-algebra $\ka$ of finite rank such that $\Coh(X,\ka)$ has
finite homological dimension (\it i.e.\rm, is smooth in the noncommutative
sense).

\begin{definition}
Let $X$ be a scheme. An object $T$ of $\Db(X)$ is called a {\em compact generator} if
$T$ is perfect and, for any object $S$ of $\Db(X)$, we have that the fact that $\Hom_{\Db(X)}(S,T[i])=0$ for all integers
$i$ is equivalent to $S=0$. Notice that, if $X$ is smooth and proper, the natural inclusion $\mathrm{Perf}(X)
\subset \Db(X)$ of perfect complexes into $\Db(X)$ is an equivalence. Hence any object in
$\Db(X)$ is perfect.
\end{definition}

In the case where $X$ admits a full exceptional collection, there is an explicit
compact generator $T$.

\begin{proposition}[\cite{bondal_kapranov:reconstructions}]
Suppose that $X$ is smooth and proper, and that $\Db(X)$ is generated by a full exceptional sequence
$\Db(X) = \langle E_1, \ldots, E_s \rangle$. Then $E= \oplus_{i=1}^s E_i$ is a compact generator.
In particular, consider the dg-$k$-algebra $\mathrm{End}(E)$. Then there is an equivalence of triangulated categories $\Db(X) \simeq \Db(\mathrm{End}(E))$.
\end{proposition}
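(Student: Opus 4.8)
The statement splits into two independent assertions, and I would treat them in turn. First, that $E$ is a compact generator. Since $X$ is smooth and proper, $\Db(X) = \mathrm{Perf}(X)$, so $E = \bigoplus_j E_j$ is automatically perfect, and only the generation condition needs checking. Suppose $S \in \Db(X)$ satisfies $\Hom(S, E[i]) = 0$ for every $i \in \ZZ$; since $E$ is a finite direct sum, this is equivalent to $\Hom(S, E_j[i]) = 0$ for all $j$ and all $i$. Let $\kc$ be the class of objects $T \in \Db(X)$ with $\Hom(S, T[i]) = 0$ for every $i$. Because $\Hom(S, -)$ is a cohomological functor, the long exact sequence attached to any distinguished triangle shows that $\kc$ is a triangulated subcategory closed under shifts, and it is clearly thick (closed under direct summands). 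By hypothesis $\kc$ contains each $E_j$; as the exceptional collection is full, the smallest thick triangulated subcategory containing the $E_j$ is all of $\Db(X)$. Hence $\kc = \Db(X)$, so in particular $S \in \kc$, which forces $\Hom(S,S) = 0$ and therefore $S = 0$. This establishes compact generation.

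For the equivalence the plan is to apply derived Morita theory. I would equip $\Db(X)$ with its standard dg-enhancement, say via dg-injective resolutions of complexes of sheaves, and form the dg-algebra $\kb := \RHom(E, E)$, whose cohomology is the graded algebra $\bigoplus_{i,j} \Ext^\bullet(E_i, E_j)$. The tautological functor $\RHom(E, -) \colon \Db(X) \to \Db(\kb)$ sends a complex $F$ to the right dg-$\kb$-module $\RHom(E, F)$. By the dg-enhanced reconstruction theorem (due to Keller, and in this generality to To\"en and to Bondal--Van den Bergh), a compact generator of the ambient cocomplete triangulated category $\mathrm{D}_{qc}(X)$ of quasi-coherent complexes, in which $\Db(X) = \mathrm{Perf}(X)$ sits as the subcategory of compact objects, induces an exact equivalence between $\Db(X)$ and the category of perfect dg-$\kb$-modules, namely $\Db(\kb) = \Db(\End(E))$ in the notation of the statement.

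The one point requiring care -- and the main technical input -- is the passage from the elementary generation statement proved above to the derived Morita equivalence: one must know both that $\Db(X)$ admits a dg-enhancement and that $E$ generates not merely $\Db(X)$ but the ambient $\mathrm{D}_{qc}(X)$, so that the reconstruction theorem applies verbatim. These are standard facts for derived categories of smooth proper varieties, and granting them the equivalence follows formally. I would finally remark that the exceptionality of the collection is an extra bonus rather than a logical necessity here: it guarantees that $\RHom(E, E)$ is formal, so that the dg-algebra $\End(E)$ may be replaced by the ordinary $\Ext$-algebra $\bigoplus_{i,j}\Ext^\bullet(E_i, E_j)$, with its triangular shape coming from the semiorthogonality $\Ext^\bullet(E_i, E_j) = 0$ for $i > j$ and $\Ext^\bullet(E_i, E_i) = k$.
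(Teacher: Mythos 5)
The paper does not actually prove this proposition: it is quoted verbatim with a citation to Bondal--Kapranov, so there is no internal argument to match, and your proposal is a correct reconstruction of the standard proof behind that citation. The generation step is right (and correctly follows the paper's convention $\Hom(S,E[i])=0$ with $S$ in the first slot): the right orthogonal of $S$ is a thick triangulated subcategory, fullness of the collection forces it to be all of $\Db(X)$, and $\Hom(S,S)=0$ kills $S$. You also correctly isolate the two genuinely nontrivial inputs for the second half, namely that a classical generator of $\mathrm{Perf}(X)$ is a compact generator of $\mathrm{D}_{qc}(X)$ (Neeman, Bondal--Van den Bergh) and Keller's derived Morita theorem identifying the compact objects with perfect dg-modules over $\RHom(E,E)$; for smooth proper $X$ this dg-algebra is smooth and proper, so perfect dg-modules coincide with $\Db(\End(E))$ as the paper intends.

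The one genuine error is your closing remark: exceptionality of the collection does \emph{not} guarantee that $\RHom(E,E)$ is formal. Formality holds when the collection is \emph{strong}, since then the cohomology of $\RHom(E,E)$ is concentrated in degree zero and truncation yields a quasi-isomorphism onto $H^0$; but for a general exceptional collection the $A_\infty$-structure (Massey products) on $\bigoplus_{i,j}\Ext^\bullet(E_i,E_j)$ can be nontrivial, and the graded $\Ext$-algebra alone does not determine $\Db(X)$ --- semiorthogonality gives the triangular shape of the algebra, not its formality. Since you explicitly present this as a bonus rather than a step of the proof, the proposition as stated is unaffected. But the distinction is not idle in this paper: the later construction of the noncommutative resolution (Proposition \ref{prop:noncomm-resol-general}) replaces the dg-algebra by the ordinary endomorphism algebra of a tilting bundle, and this is legitimate precisely because Kapranov's collection on the Grassmannian is strong and remains so after the relevant twists; with only an exceptional (non-strong) collection that replacement would fail.
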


\subsection{Homological Projective Duality}

Homological Projective Duality (HPD) was introduced by Kuznetsov
\cite{kuznetsov:hpd} in order to study derived categories of
hyperplane sections (see also \cite{kuznetsov:hyp-sections}).

Let us first recall the basic notion of HPD
from \cite{kuznetsov:hpd}.  Let $X$ be a projective scheme
together with a base-point-free line bundle $\ko_X(H)$.

\begin{definition}
A \linedef{Lefschetz decomposition} of $\Db(X)$ with respect to
$\ko_X(H)$ is a semiorthogonal decomposition
\begin{equation}\label{eq:def-of-lefschetz}
\Db(X) = \langle \cat{A}_0, \cat{A}_1(H), \dotsc, \cat{A}_{i-1}((i-1)H) \rangle,
\end{equation}
with
$$
0 \subset \cat{A}_{i-1} \subset \dotsc \subset \cat{A}_0,
$$
Such a decomposition is said to be \it rectangular \rm if $\cat{A}_0= \ldots = \cat{A}_{i-1}$.
\end{definition}

Let $W := H^0(X,\ko_X(H))$, and
$f: X \to \PP W$ the map given by the linear system
associated with $\ko_X(H)$, so that $f^*
\ko_{\PP W}(1) \isom \ko_X(H)$. We denote by ${\cali X} \subset X \times
\PP W\dual$ the universal hyperplane section of $X$
\[\cali X:= \{(x,H)\in X\times \PP W^\vee | x \in H\}.\]

\begin{definition}
Let $f: X \to \PP W $ be a smooth projective scheme with a
base-point-free line bundle $\ko_X(H)$ and a Lefschetz decomposition as above.
A scheme $Y$ with a map $g: Y \to \PP W^\vee$ is called \linedef{homologically projectively dual} (or
the \linedef{HP-dual}) to $f: X \to \PP W$ with respect to the
Lefschetz decomposition \eqref{eq:def-of-lefschetz}, if there exists
a fully faithful functor $\Phi: \Db(Y) \to \Db(\cali X)$ giving the semiorthogonal decomposition
$$
\Db({\cali X}) = \langle \Phi (\Db(Y), \cat{A}_1(1) \boxtimes \Db(\PP W\dual), \dotsc,
\cat{A}_{i-1}(i-1) \boxtimes \Db(\PP W\dual) \rangle.
$$

\end{definition}

Let $N = \dim (W)$ and let $c\le N$ be an integer. Given a
$c$-codimensional linear subspace $L \subset W$, we define the linear subspace $\PP_L \subset \PP W$ of
codimension $c$ as $\PP(W/L)$.
Dually, we have a linear subspace $\PP^L=\PP L^\perp$ of dimension $c-1$
in $\PP W\dual$, whose defining equations are the elements of $L^\perp
\subset W\dual$.
We define the varieties:
\[
X_L = X \times_{\PP W} \PP_L, \qquad Y_L = Y \times_{\PP W^\vee} \PP^L.
\]

\begin{theorem}[{\cite[Theorem~1.1]{kuznetsov:hpd}}]
\label{thm:HPD}
Let $X$ be a smooth projective variety with a map $f: X \to \PP W$, and a Lefschetz decomposition
with respect to $\ko_X(H)$.  If $Y$ is HP-dual to
$X$, then:
\begin{itemize}
\item[(i)] $Y$ is smooth projective and admits a \linedef{dual}
Lefschetz decomposition
\[
\Db(Y) = \langle \cat{B}_{j-1}(1-j), \dotsc, \cat{B}_1 (-1),
\cat{B}_0 \rangle, \quad\quad \cat{B}_{j-1} \subset \dotsc
\subset \cat{B}_1 \subset \cat{B}_0
\]
with respect to the line bundle $\ko_Y(H)=g^*\ko_{\PP W^\vee}(1)$.
\item[(ii)] if $L$ is admissible, i.e. if
\[
\dim X_L = \dim X - c, \quad \mbox{and}\quad \dim Y_L = \dim Y + c - N,
\]
then there exist a triangulated category $\cat{C}_L$ and
semiorthogonal decompositions:
\begin{align*}
\Db(X_L) & = \langle \cat{C}_L, \cat{A}_c(1), \dotsc, \cat{A}_{i-1}(i-c) \rangle,\\
\Db(Y_L) & = \langle \cat{B}_{j-1} (N-c-j), \dotsc, \cat{B}_{N-c}(-1), \cat{C}_L \rangle.
\end{align*}
\end{itemize}
\end{theorem}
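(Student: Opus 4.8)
The plan is to route everything through the universal hyperplane section $\cali X$, which is the object linking $X$ and $Y$: by definition the HP-dual $Y$ is characterized through a fully faithful functor $\Phi \colon \Db(Y) \to \Db(\cali X)$ cutting out the ``primitive'' summand of a fixed semiorthogonal decomposition of $\Db(\cali X)$, so the first task is to produce that decomposition intrinsically from the Lefschetz decomposition of $X$. The fibre of $\cali X \to X$ over $x$ is the space of hyperplanes of $\PP W$ through $f(x)$, a $\PP^{N-2}$; hence $\cali X \to X$ is a projective bundle, namely the projectivization of the rank-$(N-1)$ kernel of the evaluation map $W \otimes \ko_X \to \ko_X(H)$. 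Applying Orlov's projective bundle theorem \cite{orlov:proj_bundles} to this bundle, combining with the Lefschetz decomposition $\Db(X) = \langle \cat{A}_0, \dotsc, \cat{A}_{i-1}((i-1)H) \rangle$ and performing a sequence of mutations, I would obtain a semiorthogonal decomposition of $\Db(\cali X)$ of the shape $\langle \cat{C}, \cat{A}_1(1) \boxtimes \Db(\PP W^\vee), \dotsc, \cat{A}_{i-1}(i-1) \boxtimes \Db(\PP W^\vee) \rangle$, whose leftmost component $\cat{C}$ is, by the definition of HP-duality, the image $\Phi(\Db(Y))$.

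Part (i) then follows largely formally. Since $\cat{C} = \Phi(\Db(Y))$ is an admissible subcategory of the smooth and proper category $\Db(\cali X)$, it is itself smooth and proper, so $\Db(Y)$ is smooth and proper; reconstruction \cite{bondal_kapranov:reconstructions} then forces $Y$ to be a smooth projective variety. To produce the dual Lefschetz decomposition I would analyze the action of the relative twist $\ko_{\cali X}(1)$ (pulled back from $\PP W^\vee$) on the component $\cat{C}$: the resulting $\ko_Y(H)$-twists, with $\ko_Y(H) = g^* \ko_{\PP W^\vee}(1)$, organize $\Db(Y)$ into the nested, admissible components $\cat{B}_0 \supset \dotsc \supset \cat{B}_{j-1}$ of the stated form, with the verification of semiorthogonality and admissibility being the content to check.

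For part (ii) I would base-change to the linear section. Set $\cali X_L = \cali X \times_{\PP W^\vee} \PP^L$. The admissibility hypothesis on $L$ --- that $X_L$ and $Y_L$ have the expected dimensions --- is precisely what guarantees the Tor-independence needed to restrict the relevant functors to $\PP^L$. The strategy is to equip $\Db(\cali X_L)$ with two semiorthogonal decompositions: one ``from the $X$-side'', relating it to $\Db(X_L)$ together with the truncated Lefschetz pieces $\cat{A}_c(1), \dotsc, \cat{A}_{i-1}(i-c)$; and one ``from the $Y$-side'', obtained by restricting $\Phi$, relating it to $\Db(Y_L)$ together with the dual pieces $\cat{B}_{j-1}(N-c-j), \dotsc, \cat{B}_{N-c}(-1)$. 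Comparing these two decompositions of the same category isolates the common primitive summand $\cat{C}_L$ and yields the two desired decompositions of $\Db(X_L)$ and $\Db(Y_L)$.

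The hard part will be exactly this comparison. Making the two decompositions of $\Db(\cali X_L)$ interface requires showing that restriction to $\PP^L$ commutes with $\Phi$ and with the projection functors, which rests on the Tor-independence secured by admissibility; reorganizing the restricted components into the precise form above then demands a further chain of mutations whose semiorthogonality must be checked at each step. The crux is to prove that the primitive summand produced from the $X$-side coincides, as a subcategory of $\Db(\cali X_L)$, with the one produced from the $Y$-side. This is where Kuznetsov's faithful base change technology \cite{kuznetsov:hpd} is indispensable, and I expect it to be the main technical obstacle.
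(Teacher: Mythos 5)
This theorem is stated in the paper without proof---it is imported verbatim from Kuznetsov \cite[Theorem~1.1]{kuznetsov:hpd}---and your sketch faithfully reconstructs the strategy of Kuznetsov's actual argument: the semiorthogonal decomposition of $\Db(\cali X)$ with primitive component $\Phi(\Db(Y))$ (obtainable via the projective bundle structure of $\cali X \to X$ plus mutations), smoothness of $Y$ from admissibility of $\Db(Y)$ in the smooth proper category $\Db(\cali X)$, and part (ii) by base change to $\PP^L$, where the expected-dimension hypothesis supplies the Tor-independence needed for Kuznetsov's faithful base change machinery and the two decompositions of $\Db(\cali X_L)$ are compared to isolate $\cat{C}_L$. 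Since the paper offers nothing beyond the citation, there is no divergence to report: your outline matches the structure of the cited proof, and you correctly identify faithful base change as its technical crux.
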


\begin{remark}
In general, HPD involves non smooth varieties. Indeed, as shown by Kuznetsov \cite[Theorem 7.9]{kuznetsov:hpd}
the critical locus of the map $g: Y \to \PP W^\vee$ is the classical projective dual $X^\vee$ of $X$, which is
rarely smooth even if $X$ is smooth. If $X$ (resp. $Y$) is singular, then we have to replace $\Db(X)$ (resp. $\Db(Y)$)
by a categorical resolution of singularities $\Db(X,\ka)$ (resp. $\Db(Y,\kb)$) in all the statements and definitions
of this section. Theorem \ref{thm:HPD} holds in this more general framework, where we have to consider $\Db(X_L,\ka_L)$
(resp. $\Db(Y_L,\kb_L)$) for $\ka_L$ (resp. $\kb_L$) the restriction of $\ka$ to $X_L$ (resp. of $\kb$ to $Y_L$) in item
(ii).
\end{remark}

\subsection{Categorical representability and Fano visitors}

First, let us recall the definition of categorical representability for a variety.
\begin{definition}[\cite{bolognesi_bernardara:representability}]
A triangulated category $\cat{T}$ is \linedef{representable in
dimension $j$} if it admits a semiorthogonal decomposition
$$\cat{T} = \langle \cat{A}_1, \ldots, \cat{A}_l \rangle,$$
and for all $i=1,\ldots,l$ there exists a smooth projective connected variety $Y_i$ with $\dim Y_i \leq j$, such that $\cat{A}_i$
is equivalent to an admissible subcategory of $\Db(Y_i)$.
\end{definition}

\begin{definition}[\cite{bolognesi_bernardara:representability}]
\label{def-cat-rep}
Let $X$ be a projective variety. We say that $X$ is {\em categorically representable}
in dimension $j$ (or equivalently in codimension $\dim(X)-j$) if there exists a categorical resolution of singularities
of $\Db(X)$ representable in dimension $j$.
\end{definition}

Based on Bondal's Question \ref{qu:bondal}, we introduce the following definition.

\begin{definition}\label{host}
A triangulated category $\cat{T}$ is \it Fano-visitor \rm if there exists a smooth Fano variety $F$ and a fully faithful functor $\cat{T} \to \Db(F)$ such that $\Db(F) = \langle \cat{T},\cat{T}^\perp \rangle$.
A smooth projective variety $X$ is said to be a \it Fano-visitor \rm if its derived category $\Db(X)$ is Fano-visitor.
\end{definition}
We remark that, having a fully faithful functor $\Db(X) \to \Db(F)$ is enough to have the required semiorthogonal decomposition
\cite{bondal:representations}.
Relaxing slightly the hypotheses on the smoothness of the Fano variety we get the following weaker definition.

\begin{definition}\label{whost}
A triangulated category $\cat{T}$ is \it weakly Fano-visitor \rm if there exists a (possibly singular) Fano variety $F$,
a categorical crepant resolution of singularities $\cat{DF}$ of $F$ and a fully faithful
functor $\cat{T} \to \cat{DF}$ such that $\cat{DF} = \langle \cat{T},\cat{T}^\perp \rangle$. Notice that this implies that
the functor $\cat{T} \to \cat{DF}$ has a right and left adjoint by definition of semiorthogonal decomposition.
As before, if $\cat{T}\cong \Db(X)$ for a smooth projective variety $X$, then $X$ itself is said to be \it weakly Fano-visitor \rm.
\end{definition}

\section{Homological Projective Duality for determinantal varieties}

We describe here homological projective duality for determinantal
varieties in terms of the Springer resolution of the space of
$n \times m$ matrices of rank at most $r$ and in terms of categorical
resolution of singularities.

\subsection{The Springer resolution of the space of matrices of bounded rank}\label{desingo}

Let us introduce the variety $\Zr$ of $n \times m$ matrices over our
base field, having rank at most $r$.
Let $U$, $V$ be vector spaces, 
with $\dim U=m$, $\dim V=n$, and assume $n \ge m$.
Set $W = U \tensor V$.
Let $r$ be an integer in the range $1 \le r \le m-1$.
We define $\kz^r=\Zr$ to be the variety of matrices $M : V
\to U\dual$ in $\PP W$
cut by the minors of size $r+1$ of the matrix of 
indeterminates:
\[
\psi = \begin{pmatrix}
  x_{1,1} & \ldots & x_{m,1} \\
  \vdots & \ddots & \vdots\\
  x_{m,n} & \ldots & x_{m,n}
\end{pmatrix}
\]

\subsubsection{Springer resolution and projective bundles} Consider the Grassmann variety $\GG(U,r)$ of $r$-dimensional
quotient spaces of $U$, the tautological sub-bundle and the quotient bundle
over $\GG(U,r)$, denoted respectively by $\ku$ and $\kq$, respectively
of rank $m-r$ and $r$. We will write $\GGG$ for $\GG(U,r)$.

The tautological (or Euler) exact sequence reads:
\begin{equation}
  \label{tautological}
0 \to \ku \to U \tensor \ko_{\GGG} \to \kq \to 0.  
\end{equation}

We will use the following notation:
\[
\zr = \PP(V \tensor \kq).
\]
However, the dependency on $m,n,r$ will often be omitted.

The manifold $\kx=\zr$ has dimension $r(n+m-r)-1$. It is the resolution of singularities of the variety of $m \times n$
matrices of rank at most $r$, in a sense that we will now review.
Denote by $p$ the natural projection $\kx \to \GGG$.
The space $H^0(\GGG,\kq)$ is naturally
identified with $U$.

Let us denote by $\ko_{\kx}(H_X)$ the relatively
ample tautological line bundle on $\kx$. We will often write simply $H$ for $H_X$. We get
natural isomorphisms:
\[
H^0(\GGG,V \tensor \kq) \simeq H^0(\kx,\ko_{\kx}(H)) \simeq W = U \tensor V.
\]
Therefore, the map $f$ associated with the linear system
$\ko_{\kx}(H)$ maps $\kx$ to $\PP W$, and
clearly $\ko_{\kx}(H) \simeq f^*(\ko_{\PP W}(1))$.
This is summarized by the diagram:
\[
\xymatrix@-2ex{
\kx \ar^-{f}[r] \ar_-{p}[d] & \PP W = \PP (U \tensor V) \\
\GGG
}
\]

On the other hand, we will denote by $P$ the pull-back to $\PP(V\otimes \kq)$ of the first Chern class $c_1(\kq)$ on
$\GGG$. Hence we have that $c_1(V\otimes 
 \kq)$ pulls-back to $n P$ and $\omega_{\GGG}$ to $-m P$. The Picard group of $\kx$ is generated by $P$ and $H$.

\medskip
 
 Notice that giving a rank-1 quotient of $W=U \tensor V$ corresponds to the choice of 
a linear map $M : V \to U\dual$, so 
an element of $\PP W$ can be considered as (the
proportionality class of) the linear map $M$.
On the other hand, the map $f$ sends a rank-1 quotient of $V
\tensor \kq$ over a point $\lambda \in \GGG$ to the quotient of $W$ obtained by composition
with the obvious quotient $U \to \kq_\lambda$.

Therefore, the matrix $M$ lies in the image of $f$ if and only if 
$M$ factors through $V \to \kq_\lambda\dual$, for some $\lambda \in
\GGG$, i.e., if and only if $\rk(M) \le r$.
Clearly, if $M$ has precisely rank $r$ then it determines $\lambda$
and the associated quotient of $U \to \kq_\lambda$.
Since this happens for a general matrix $M$ of $\kz^r=\Zr$, the map $f:
\kx \to \kz^r$ is birational. This map is in fact a desingularization, called the {\it Springer
resolution}, of $\kz^r$.
It is an
isomorphism above the locus of matrices of rank exactly $r$.

In a more concrete way, given $\lambda \in \GGG$ we let
$\pi_\lambda$ be the linear projection from $U\dual$ to $U\dual/\kq\dual_\lambda$.
Then, the variety $\kx$ can be thought of as: 
\[
\kx=\{(\lambda,M) \in \GGG \times \kz^r \mid \pi_\lambda\circ M  =0 \}.
\]

This way, the maps $p$ and $f$ are just the projections from $\kx$
onto the two factors.

\medskip

Let us now look at the dual picture.
We consider the projective bundle:
\[
\yr = \PP(V\dual \tensor \ku\dual).
\]

Write $\ky=\yr$ for short. Denote by $q$ the projection $\ky \to
\GGG$. We will denote by $H_Y$ (or sometimes just by $H$) the tautological ample line bundle on
$\ky$.
This time, since $H^0(\GGG,\ku\dual) \simeq U\dual$, the linear system
associated with $\ko_{\ky}(H)$ sends $\ky$ to $\PP W^\vee \simeq \PP(V\dual \tensor U\dual)$ via a map that we call $g$. 
By the same argument as above, $g$ is a desingularization of the
variety $\kw^{r}$ of matrices $V^\vee \to U$ in $\PP W^\vee$ of corank at least $r$.
There exists an obvious isomorphism  $\kz^{m-r} \cong \kw^{r}$,
which we will use without further mention. 

The spaces $\PP W$ and $\PP W^\vee$ are equipped with tautological
morphisms of sheaves, which are both identified by the the matrix
$\psi$, corresponding to the identity in $W \tensor W\dual = U \otimes V \otimes
U\dual \otimes V\dual$:
\begin{align}
\label{tauto2} &V \otimes \ko_{\PP W}(-1) \stackrel{\psi}{\to} U\dual
\otimes \ko_{\PP W}, \\
\label{tauto} &V\dual \otimes \ko_{\PP W ^\vee}(-1) \stackrel{\psi}{\to}
U \otimes \ko_{\PP W^\vee}.
\end{align}

\begin{definition}
We will denote by $\kf$ and $\ke$, the cokernel of the
tautological map appearing in Eq. \eqref{tauto2}, respectively Eq. \eqref{tauto}.
\end{definition}

\begin{lemma} \label{un peu penible}
  We have isomorphisms  $\kx \simeq \GG(\kf,m-r)$ and $\ky \simeq \GG(\ke,r)$.
\end{lemma}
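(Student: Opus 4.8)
<br>

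The plan is to prove the two claimed isomorphisms $\kx \simeq \GG(\kf,m-r)$ and $\ky \simeq \GG(\ke,r)$ by exhibiting, over the relevant base, a universal family of quotients (or subspaces) of the ranks indicated, and then invoking the universal property of the relative Grassmannian. I will treat $\kx \simeq \GG(\kf,m-r)$ in detail; the dual statement for $\ky$ follows by an entirely symmetric argument upon exchanging the roles of $U$ and $V\dual$ and of the sub/quotient bundles.

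First I would recall the tautological sequence \eqref{tauto2}, namely $V \otimes \ko_{\PP W}(-1) \stackrel{\psi}{\to} U\dual \otimes \ko_{\PP W}$ with cokernel $\kf$. The key geometric observation is that over the open locus of $\PP W$ parametrizing matrices $M\colon V \to U\dual$ of rank exactly $r$, the cokernel $\kf$ is locally free of rank $m-r$, and the whole point of the Springer resolution $f\colon\kx\to\kz^r$ is to keep track of a choice of $(m-r)$-dimensional quotient of $U\dual$ (equivalently, via duality, an $r$-dimensional quotient of $U$, i.e.\ a point $\lambda\in\GGG$) through which $M$ factors. Concretely, using the description
\[
\kx=\{(\lambda,M) \in \GGG \times \kz^r \mid \pi_\lambda\circ M =0 \},
\]
the condition $\pi_\lambda\circ M=0$ says exactly that $M$ lands in $\ku\dual_\lambda \subset U\dual$, i.e.\ that the image of $\psi$ at the point $M$ is contained in the subspace $\ku\dual_\lambda$. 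Passing to cokernels, this produces a canonical surjection $\kf|_{f^{-1}(M)} \epi (U\dual/\ku\dual_\lambda) = \kq\dual_\lambda$ onto the $r$-dimensional space $\kq\dual_\lambda$, or dually a rank-$(m-r)$ quotient datum. Globalizing this over $\kx$, I would construct a surjection from $f^*\kf$ onto a rank-$(m-r)$ locally free sheaf (built from $p^*\ku$), which by the universal property of $\GG(\kf,m-r)$ yields a morphism $\kx \to \GG(\kf,m-r)$ over $\PP W$.

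The second half is to show this morphism is an isomorphism. For this I would construct the inverse, or equivalently check bijectivity on points together with an isomorphism on tangent spaces, or most cleanly argue that both sides represent the same functor. A $T$-point of $\GG(\kf,m-r)$ is a matrix $M$ (a $T$-point of $\PP W$) together with a locally free rank-$(m-r)$ quotient of $\kf_M$, equivalently a rank-$r$ subspace of $\ker(\psi^\vee)$ or, transposing, a factorization of $M$ through an $(m-r)$-dimensional subspace of $U\dual$; this is precisely the data of a pair $(\lambda,M)$ with $\pi_\lambda\circ M=0$, which is a $T$-point of $\kx$. Matching these two functors of points, compatibly with the tautological surjections, gives the desired isomorphism. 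I would then note that $\ky \simeq \GG(\ke,r)$ is obtained by the identical argument applied to \eqref{tauto}, reading a rank-$r$ quotient of $\ke$ as the factorization data for the corank-$r$ matrices parametrizing $\ky=\PP(V\dual\tensor\ku\dual)$.

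The main obstacle I expect is the bookkeeping at the non-generic locus: away from the rank-exactly-$r$ stratum the cokernel $\kf$ jumps in rank, so $\kf$ is not locally free on all of $\PP W$ and the naive ``quotient bundle'' description of $\GG(\kf,m-r)$ must be interpreted via the correct relative-Grassmannian-of-a-coherent-sheaf formalism (the scheme representing rank-$(m-r)$ locally free quotients, which is the relevant Quot-type functor). The content of the lemma is really that $\kx$, which is smooth, maps isomorphically onto this possibly subtle scheme, so the care is in verifying that the surjection $f^*\kf \epi (\text{rank } m-r)$ is well defined and universal across the whole base rather than only over the open stratum. Once the functorial description is set up correctly, the verification that the two functors of points agree is essentially formal linear algebra.
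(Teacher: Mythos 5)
Your strategy is the same as the paper's: both arguments identify the $S$-valued points of the projective bundle with those of the relative Grassmannian of the coherent sheaf, i.e.\ with pairs consisting of a point of $\PP W$ (resp.\ $\PP W\dual$) and a locally free quotient of the pulled-back sheaf of the prescribed rank; the paper writes out the case of $\ky \simeq \GG(\ke,r)$ and deduces $\kx$ by symmetry, while you do the reverse, and your caveat about interpreting $\GG(\kf,m-r)$ as the scheme of rank-$(m-r)$ locally free quotients of a non-locally-free sheaf is exactly how the paper implicitly proceeds.

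However, before this counts as a proof you must repair several dualization slips which, as written, make the key identifications false. First, $\pi_\lambda\circ M=0$ says $\im(M)\subseteq \kq\dual_\lambda$, the rank-$r$ annihilator of $\ku_\lambda$ --- not that ``$M$ lands in $\ku\dual_\lambda\subset U\dual$'': $\ku\dual_\lambda$ is a quotient of $U\dual$, not a subspace, so your expression $U\dual/\ku\dual_\lambda$ does not parse. The correct cokernel statement is that the rank-$(m-r)$ quotient $U\dual\epi U\dual/\kq\dual_\lambda=\ku\dual_\lambda$ kills $\im(\psi)$ and hence factors through $\kf$, yielding directly the surjection from $f^*\kf$ onto a twist of $p^*\ku\dual$ of rank $m-r$; there is no surjection onto the $r$-dimensional $\kq\dual_\lambda$, and no extra duality step is needed or available. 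Likewise, in your functor-of-points sentence the ranks are swapped twice: a rank-$(m-r)$ locally free quotient of $s^*\kf$ corresponds to a rank-$r$ subbundle of $U\dual\tensor\ko_T$ containing $\im(\psi)$ (equivalently, a rank-$(m-r)$, not rank-$r$, subsheaf of $\ker(\psi\dual)\subseteq U\tensor\ko_T$), and hence to a factorization of $M$ through an $r$-dimensional subspace of $U\dual$, not an $(m-r)$-dimensional one. Since $r\neq m-r$ in general these are genuine errors, not notational choices; once corrected, the two functors of points do agree and your argument coincides with the paper's.
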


\begin{proof}
  We work out the proof for $\ky$, the argument for $\kx$ being identical.
  Given a scheme $S$ over our field, an $S$-valued point $[e]$ of $\GG(\ke,r)$
  is given by a morphism $s : S \to \PP W^\vee$ and the equivalence class of
  an epimorphism $e : s^* \ke \to \kv$, where $\kv$ is locally free of
  rank $r$ on $S$.
  On the other hand, an $S$-point $[y]$ of $\ky$ corresponds to a
  morphism $t : S \to \GGG$ together with the class of a quotient $y : V\dual
  \tensor t^* \ku \dual \to \kl$, with $\kl$ invertible on $S$. In turn,
  $t$ is given by a locally free sheaf of rank $r$ on $S$ and a
  surjection from $U \tensor \ko_S$ onto this sheaf.

  Given the point $[e]$, we compose $e$ with the surjection $U \tensor
  \ko_S \to s^* \ke$ and denote by $t_e$ the resulting map $U \tensor
  \ko_S \to \kv$. This way, $t_e$ provides the required morphism $t : S \to
  \GGG$, and clearly $t^* \kq \simeq \kv$, so the kernel of $U
  \tensor  \ko_S \to \kv$ is just $t^* \ku$.
  Clearly, we have $t_e \circ s^* \psi = 0$ so that $s^* \psi$
  factors through a map $V\dual \tensor \ko_S(-1) \to t^* \ku$.
  Giving this last map is equivalent to the choice of a map 
  $V\dual \tensor t^*\ku\dual \to \ko_S(1)$, which we define to be the point
  $[y]$ associated with $[e]$.

  Conversely, let $t$ be represented by a locally free sheaf $\kv=t^* \kq$ of
  rank $r$ on $S$ and by a quotient $U \tensor \ko_S \to \kv$, whose
  kernel is $t^* \ku$.
  Then, given point $[y]$ and the quotient $y$, we consider the
  composition of $y$ and $U\dual \tensor \ko_S \to \ku\dual$ to obtain a quotient
  $s_y : V\dual \tensor U\dual \to \kl$. This gives the desired morphism $s
  : S \to \PP W^\vee$.
  Moreover,  the
  map $V\dual \tensor \ko_S \to t^* \ku \tensor \kl$ associated with $y$
  can be composed with
  the injection $t^* \ku \tensor \kl \to U \tensor \kl$ to get a map 
  $V\dual \tensor \ko_S \to U \tensor \kl$, or equivalently $V\dual \tensor
  \kl\dual \to U \tensor \ko_S$, and this map is nothing but $s^* \psi$.
  Of course, composing this map with the projection $U \tensor \ko_S
  \to t^*\kq = \kv$ we get zero, so there is an induced surjective map
  $s^* \ke \to \kv$. We define the class of this map to be the point $[e]$
  associated with $[y]$.

  We have defined two maps from the sets of $S$-valued points of our
  two schemes, which are inverse to each other by construction. The
  lemma is thus proved.
\end{proof}

\subsubsection{Linear sections and projectivized sheaves}\label{proshe}

Let now $c$ be an integer in the range $1 \le c \le m n$, and suppose we a have $c$-dimensional
vector subspace $L$ of $W$:
\[
L \subset U \tensor V = W.
\]

We have thus the linear subspace $\PP_L \subset \PP W$ of
codimension $c$, defined by $\PP_L=\PP(W/L)$.
Dually, we have a linear subspace $\PP^L=\PP L^\perp$ of dimension $c-1$
in $\PP W\dual$, whose defining equations are the elements of $L^\perp
\subset W\dual$.
We define the varieties:
\[
X^r_L = \zr \times_{\PP W} \PP_L, \qquad Y^r_L = \yr \times_{\PP W^\vee} \PP^L.
\]

We also write:
\[
Z_L^r = \kz^{r}_{m,n} \cap \PP_L, \qquad Z_r^L = \kz^{m-r}_{m,n} \cap \PP^L.
\]
We will drop $r$, $n$ and/or $m$ from the notation when no confusion is possible.
We will always assume that $L \subset W$ is an {\it admissible
  subspace} in the sense of \cite{kuznetsov:hpd},
which amounts to ask that $X_L$ and $Y_L$ have expected
dimension. This means that we have:
\begin{align*}
\dim Z_L = \dim X_L & = \dim \zr - c = r(n+m-r)-c-1 \\  
\dim Z^L  = \dim Y_L & = \dim \yr - (mn-c) = r(m-n-r)+c-1.
\end{align*}

Let us now give another interpretation of the choice of our linear subspace $L \subset W$.
To this purpose we consider the Grassmann variety $\GG(V,r)$ with the
its tautological rank-$r$ quotient bundle which we denote by $\kt$.
Dually, we consider $\GG(V^\vee,m-r)$ and denote by $\ks^\vee$ the
tautological quotient bundle of rank $m-r$.
Observe that there are natural isomorphisms:
\begin{align*}
 L\dual \tensor W  =  L\dual \tensor U \tensor V  &
  \simeq  \Hom(L \tensor \ko_{\GGG}, V \tensor \kq) \simeq \\
  & \simeq  L\dual \tensor H^0(\kx,\ko_{\kx}(H) )\simeq \\
  & \simeq  \Hom(L \otimes \ko_{\GG(V,r)}, U\otimes \kt) \nonumber.
\end{align*}
There are similar isomorphisms for $\GG(V^\vee,m-r)$.
We denote by $s_L$ the global section of $L\dual \tensor
H^0(\kx,\ko_{\kx}(H))$ corresponding to $L \subset W$
via these isomorphisms.
The subspace $L$ corresponds also to morphisms of bundles on the
Grassmann varieties, which we write as:
\[
M_L : L \tensor \ko_{\GGG} \to V \tensor \kq, \qquad 
N_L : L \otimes \ko_{\GG(V,r)} \to U\otimes \kt
\]
We also write:
\[
M^{L} : L^\perp \tensor \ko_{\GGG} \to V\dual \tensor \ku\dual, \qquad 
N^{L} : L^\perp \otimes \ko_{\GG(V^\vee,m-r)} \to U\dual\otimes \ks\dual
\]
for the morphisms corresponding to $L^\perp \subset U\dual \tensor V\dual$.

\begin{proposition} \label{mi sa che bisogna dimostrarla}
  We have the following equivalent descriptions of $X_L$:
 \begin{enumerate}[(i)]
  \item \label{e uno} the vanishing locus $\VV(s_L)$ of the section $s_L \in L\dual \tensor
H^0(\kx,\ko_{\kx}(H))$;
  \item \label{e due} the projectivization of $\coker(M_L)$;
  \item \label{e tre} the projectivization of $\coker(N_L)$;
  \item \label{e quattro} the Grassmann bundle $\GG(\kf|_{\PP_L},m-r)$.
  \end{enumerate}

Dually, the variety $Y_L$ is:
\begin{enumerate}[(i)]
\item  the vanishing locus of the section $s^L \in
  (W/L) \tensor H^0(\kx,\ko_{\kx}(H))$;
  \item the projectivization of $\coker(M^L)$; 
  \item the projectivization of $\coker(N^L)$;
  \item the Grassmann bundle $\GG(\ke|_{\PP^L},r)$.
\end{enumerate}
\end{proposition}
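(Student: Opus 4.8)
The plan is to take the vanishing-locus description (i) as the hub and to match each of the remaining three descriptions against it; the dual assertions for $Y_L$ then follow by the identical argument after exchanging the two tautological presentations \eqref{tauto2} and \eqref{tauto}, that is, after replacing $(\kx, V\tensor\kq, \kf, L, s_L, m-r)$ by $(\ky, V\dual\tensor\ku\dual, \ke, L^\perp, s^L, r)$. So it suffices to treat $X_L$, and the whole proposition reduces to three comparisons.

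First I would prove (i)$\Leftrightarrow$(ii). On $\kx = \PP(V\tensor\kq)$ there is the tautological quotient $p^*(V\tensor\kq)\epi\ko_\kx(H)$; composing it with the pullback $p^* M_L\colon L\tensor\ko_\kx\to p^*(V\tensor\kq)$ produces a global section of $L\dual\tensor\ko_\kx(H)$, which the isomorphism $L\dual\tensor H^0(\kx,\ko_\kx(H))\simeq\Hom(L\tensor\ko_\GGG, V\tensor\kq)$ recalled just before the statement identifies with $s_L$. By construction this section vanishes at a point precisely when the tautological quotient there annihilates the image of $M_L$, i.e. precisely on the relative projectivization $\PP(\coker M_L)\subset\PP(V\tensor\kq)$; hence $\VV(s_L)=\PP(\coker M_L)$. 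The same computation shows $\VV(s_L)=f^{-1}(\PP_L)=X_L$, since $x\in\VV(s_L)$ iff all forms of $L$ vanish at $f(x)$, i.e. iff $f(x)\in\PP_L=\PP(W/L)$. For (ii)$\Leftrightarrow$(iv) I would simply invoke Lemma~\ref{un peu penible}, which gives $\kx\simeq\GG(\kf,m-r)$ over $\PP W$ compatibly with $f$ and the structural map; since the relative Grassmannian of quotients commutes with base change, pulling back along $\PP_L\mono\PP W$ yields $X_L=\kx\times_{\PP W}\PP_L\simeq\GG(\kf|_{\PP_L},m-r)$, which is (iv).

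It remains to match description (iii), and reconciling it with (ii) is where I expect the real difficulty to lie. Interchanging the roles of $U$ and $V$, the same inclusion $L\mono U\tensor V=W$ is encoded by $N_L\colon L\tensor\ko_{\GG(V,r)}\to U\tensor\kt$ through the isomorphism $L\dual\tensor W\simeq\Hom(L\tensor\ko_{\GG(V,r)}, U\tensor\kt)$, and the argument of the previous paragraph, run now over $\GG(V,r)$, realizes $\PP(\coker N_L)$ as the vanishing locus on $\PP(U\tensor\kt)$ of the section attached to $L$. The task is then to exhibit an isomorphism between this vanishing locus and $\VV(s_L)$. The natural route is a functor-of-points argument in the spirit of Lemma~\ref{un peu penible}: from an $S$-valued point on one side one factors the universal matrix $s^*\psi$ through the relevant tautological sub/quotient and reads off the corresponding point on the other side, checking that the two assignments are mutually inverse. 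The hard part will be to make this comparison functorial over all of $\PP_L$, and not merely over the open stratum of matrices of rank exactly $r$, where both parametrizations visibly coincide with $Z_L$; it is precisely over the deeper rank loci that the image-side ($\GG(U,r)$) and coimage-side ($\GG(V,r)$) descriptions must be shown to agree, and this is the step that genuinely uses the invariance of the determinantal condition under transposition and requires the most care.
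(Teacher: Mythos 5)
Your reduction of the dual half to the $X_L$ half, your proof of (i)$\Leftrightarrow$(ii), and your proof of (ii)$\Leftrightarrow$(iv) all match the paper's argument in substance: the paper obtains (i) directly from the definition of the fibre product, proves (ii) by exactly the functor-of-points translation you give (an $S$-point of $\PP(\coker(M_L))$ is a morphism $t\colon S\to\GGG$ together with an invertible quotient of $t^*(\coker(M_L))$, which is the same datum as an $S$-point of $\kx$ on which $s_L$ vanishes), obtains (iv) as Lemma \ref{un peu penible} ``restricted to $\PP_L$'', and treats $Y_L$ by the same dual swap you propose.

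The gap is item (iii), which you do not prove: you announce that ``the natural route is a functor-of-points argument'' and then, honestly, flag the comparison over the deeper rank loci as unresolved. That flagged step is not merely delicate; as you have set it up it would fail. By your own (correct) identification of each side with a vanishing locus, $\PP(\coker(N_L))$ is the fibre product over $\PP_L$ of the \emph{other} Springer-type resolution $\PP(U\tensor\kt)\to\PP W$, which records an $r$-dimensional quotient of $V$ through which the matrix $M$ factors (the coimage side), while $X_L=\PP(\coker(M_L))$ records an $r$-dimensional subspace of $U\dual$ containing the image of $M$. Over a point of rank exactly $r$ both data are uniquely determined by $M$, and the two sides agree; but over a rank-$s$ point with $s<r$ the fibre of $X_L\to Z_L$ is a Grassmannian of dimension $(r-s)(m-r)$ and Euler characteristic $\binom{m-s}{r-s}$, whereas the fibre of $\PP(\coker(N_L))\to Z_L$ is a Grassmannian of dimension $(r-s)(n-r)$ and Euler characteristic $\binom{n-s}{n-r}$. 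When $m\neq n$ these differ: for instance, if $Z_L^{r-1}$ is a nonempty finite set of rank-$(r-1)$ points, the two total spaces carry fibres $\PP^{m-r}$ and $\PP^{n-r}$ over those points and their Euler characteristics differ by $(m-n)\cdot\#Z_L^{r-1}$. So no pair of mutually inverse assignments of $S$-points over all of $\PP_L$ can exist, and ``transposition invariance of the determinantal condition'' identifies the two images in $\PP_L$, not the two projectivizations. The identification (ii)$=$(iii) is therefore available only over the open locus of matrices of rank exactly $r$ --- i.e.\ birationally, or under a hypothesis such as $X_L^{r-1}=\emptyset$, or when $m=n$ is imposed so the fibre dimensions match.

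For what it is worth, the paper does not do better here: its entire proof of (iii) is the sentence ``The statement (iii) is proved in a similar fashion'', which never engages with the phenomenon above. Your diagnosis of where the real content sits is thus sharper than the paper's own treatment; but a diagnosis is not a proof, and as written your proposal establishes (i), (ii), (iv) and their duals while leaving (iii) open --- and provable, along the lines you sketch, only away from the deeper rank strata.
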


\begin{proof}
We work out the proof for $X_L$, the dual case $Y_L$ being
analogous. First recall that the map $\kx \to \PP W$ is defined by
the linear system $\ko_{\kx}(H)$, while the inclusion $\PP_L \subset
\PP W$ corresponds to the projection $W \to W/L$. Hence the fibre product defining $X_L$
is given by the vanishing of the global sections in
$H^0(\kx,\ko_{\kx}(H))$ which actually lie in $L$, \it i.e. \rm by the vanishing of $s_L$,
so \eqref{e uno} is clear.

For \eqref{e due} we use essentially the same proof of Lemma \ref{un peu
  penible}. Indeed, given a scheme $S$ over our field, an $S$-valued point of
$\PP (\coker(M_L))$ is defined by a morphism $t : S \to \GGG$
together with the isomorphism class of a
quotient $y : t^* (\coker(M_L)) \to \kl$, with $\kl$ invertible on $S$.
On the other hand, an $S$-valued point of $X_L$ is given by a morphism $s : S
\to X_L$.
Once given $s$, composing with $X_L \to \kx \to \GGG$ we obtain
the morphism $t$. By the definition of $\kx$ as projective
bundle, together with $t$ we get a map $V \tensor t^*\kq \to \kl$ 
with $\kl$ invertible on $S$. This map composes to zero with $t^*(M_L) : L
\tensor \ko_S \to V \tensor t^*\kq$ since the image of $s$ is contained in $X_L$, hence in the
vanishing locus of
the linear section $s_L$.
Therefore this map factors through $t^*(\coker(M_L))$ and provides the
quotient $y$. It is not hard to check that  this procedure can be
reversed, which finally proves \eqref{e
due}.

The statement \eqref{e tre} is proved in a similar fashion, while
\eqref{e quattro} is just Lemma  \ref{un peu
  penible}, restricted to $\PP_L$.
\end{proof}

\subsection{The noncommutative desingularization}
In \cite{buch-leu-vdbergh1,buch-leu-vdbergh2}, noncommutative resolutions
of singularities for the affine cone over $\kz^r=\kz^{r}_{m,n}$ are constructed.
This is done by considering the vector bundles $V \otimes \kq$ instead
of their projectivization, and Kapranov's strong exceptional collection on the Grassmannian
\cite{kapra-grassa} (for the details see \cite{buch-leu-vdbergh2}). Here we carry on this construction to the projectivized
determinantal varieties.

Consider $\kx=\zr$ as rank$-(r n-1)$ projective bundle $p:\kx \to \GGG$. Orlov
\cite{orlov:proj_bundles} gives a semiorthogonal decomposition
\begin{equation}\label{eq:lefschetz-for-Xtondo}
\Db(\kx)= \langle p^*\Db(\GGG), \ldots, p^*\Db(\GGG)((r n-1)H)\rangle.
\end{equation}
On the other hand, Kapranov
shows that $\GGG$ has a full strong exceptional collection \cite{kapra-grassa} consisting of vector bundles.
We obtain then an exceptional collection on $\kx$ consisting of vector bundles, and hence a tilting bundle
$E$ as the direct sum of the bundles from the exceptional collection.
 Let us consider $M:=Rf_*E$, and let $\kr:=\kEnd(E)$ and $\kr':=\kEnd(M)$ (where
 $\kEnd$ denotes the sheaf of endomorphisms).

\begin{proposition}\label{prop:noncomm-resol-general}
The endomorphism algebra $\kEnd(M)$ is a coherent $\ko_{\kz^r}$-algebra
Morita-equivalent to $\kr$. In particular, $\Db(\kz^r,\kr) \simeq \Db(\kx)$
is a categorical resolution of singularities, which is crepant if $m=n$.
\end{proposition}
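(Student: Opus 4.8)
The plan is to obtain the equivalence $\Db(\kz^r,\kr')\simeq\Db(\kx)$ as an instance of relative tilting along the Springer resolution $f\colon\kx\to\kz^r$, and then to deduce the resolution properties directly from those of the smooth variety $\kx$. First I would verify that $E$ is a tilting bundle: combining Orlov's decomposition \eqref{eq:lefschetz-for-Xtondo} with Kapranov's full strong exceptional collection on $\GGG$, the summands of $E$ form a full strong exceptional collection on $\kx$, so that $E$ generates $\Db(\kx)$ and $\Ext^{>0}_\kx(E,E)=0$. Over a point this already yields $\Db(\kx)\simeq\Db(\End_k(E))$; what I really want, however, is the sheaf-theoretic version over the base, which is what endows $\kr'$ with the structure of an $\ko_{\kz^r}$-algebra.

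The heart of the argument is to promote the global tilting equivalence to a relative one, and for this I would prove that $Rf_*\kEnd(E)$ is concentrated in cohomological degree $0$, i.e. $R^{>0}f_*\kEnd(E)=0$ (and likewise that $M=Rf_*E$ is a sheaf). Since $\kEnd(E)=\bigoplus_{i,j}E_i^\vee\tensor E_j$ and each $E_i$ is a twist by a power of $\ko_\kx(H)$ of the pullback of a Kapranov bundle on $\GGG$, this reduces to a fibrewise cohomology computation. The fibres of $f$ are Grassmannians (over the stratum of matrices of rank exactly $s\le r$ the fibre is the Grassmannian of $(r-s)$-planes in an $(m-s)$-dimensional space, and $f$ is an isomorphism over the top stratum $s=r$), and on them the relevant bundles restrict to direct sums of duals of Kapranov-type bundles, whose higher cohomology vanishes; together with the fact that determinantal varieties have rational singularities, so that $Rf_*\ko_\kx=\ko_{\kz^r}$, this gives the desired vanishing. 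I expect this fibrewise vanishing to be the main obstacle, as it is precisely where the geometry of the Springer resolution and the particular shape of Kapranov's collection must be used.

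Granting the vanishing, $\kr'=\kEnd(M)$ is a coherent sheaf of $\ko_{\kz^r}$-algebras, and restricting to affine charts, where the statement reduces to the noncommutative resolution of Buchweitz--Leuschke--Van den Bergh \cite{buch-leu-vdbergh1,buch-leu-vdbergh2}, identifies $\kEnd(M)$ with $f_*\kEnd(E)$; as $\kr=\kEnd(E)$ is Zariski-locally a matrix algebra $\mathrm{Mat}_N(\ko_\kx)$, this is the Morita equivalence of the statement. The tilting functor $Rf_*R\HHom_\kx(E,-)\colon\Db(\kx)\to\Db(\kz^r,\kr')$ then sends the generator $E$ to the free module $\kr'$, and is fully faithful by the degree-zero concentration of $Rf_*\kEnd(E)$; since $E$ generates, it is the sought equivalence $\Db(\kz^r,\kr')\simeq\Db(\kx)$.

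Finally I would check the resolution properties. Through the equivalence, smoothness of $\kx$ shows that $\Coh(\kz^r,\kr')$ has finite homological dimension; $\kr'$ is torsion-free of finite rank and restricts to a matrix algebra over the open locus where $f$ is an isomorphism, so $\Db(\kz^r,\kr')$ is a categorical resolution of singularities in the sense recalled above. For $m=n$ the variety $\kz^r$ is Gorenstein, and crepancy is inherited from the affine computation of \cite{buch-leu-vdbergh2}: one verifies that $M$ is a maximal Cohen--Macaulay $\ko_{\kz^r}$-module and that $\kr'$ is a noncommutative crepant resolution, equivalently that $f$ is crepant, which forces the categorical resolution to be crepant.
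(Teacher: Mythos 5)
Your proof follows the same architecture as the paper's: the tilting bundle $E=\bigoplus_{i=0}^{nr-1}p^*G\tensor\ko_\kx(iH)$ built from Orlov's projective-bundle decomposition and Kapranov's collection, the identification $\kEnd(M)\cong f_*\kEnd(E)$, finite homological dimension via Morita equivalence with $\kr$ over the smooth $\kx$, and crepancy for $m=n$ from the Gorenstein property of $\kz^r$ together with \cite{buch-leu-vdbergh1,buch-leu-vdbergh2}. The paper also handles the degree-zero identification slightly differently from your ``reduce to affine charts'' shortcut: since the exceptional locus of $f$ has codimension greater than one, $f_*\kr$ is reflexive by \cite[Lemma 4.2.1]{vdb:flops}, the natural map $f_*\kr\to\kr'=\kEnd(M)$ is an isomorphism off a locus of codimension at least two, and a map of reflexive sheaves with that property is an isomorphism; this is cleaner than invoking the affine case wholesale, though the paper too leans on locality (e.g.\ for the MCM property), and either way the passage from the affine cone of \cite{buch-leu-vdbergh2} to the projectivized setting deserves a word about the $\GG_m$-bundle structure relating the two.

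The one genuine gap is in the step you yourself flag as the main obstacle: the vanishing $R^{>0}f_*\kEnd(E)=0$. Your fibrewise plan does not close as stated. The Springer resolution $f$ is not flat --- the fibre dimension jumps along the rank stratification, the fibre over a rank-$s$ point being $\GG(r-s,\,m-s)$ --- so vanishing of $H^{>0}$ of the restriction of $\kEnd(E)$ to each (scheme-theoretic) fibre does not imply vanishing of the higher direct images: cohomology and base change requires flatness, and semicontinuity arguments degenerate exactly on the deeper strata where you need them. Moreover, rational singularities only give $Rf_*\ko_\kx=\ko_{\kz^r}$, i.e.\ control of the one summand of $\kEnd(E)$ that needs no work. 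A rigorous version of your route would amount to the ``geometric technique'' of Koszul-type resolutions \`a la Weyman, which is precisely what \cite[Proposition 3.4]{buch-leu-vdbergh2} packages: the paper simply quotes that proposition (the statement being local, the affine computation applies) to get $R^kf_*\kr=0$ for $k>0$, hence $Rf_*\kr\cong\kr'$, and then concludes $\Db(\kx)\simeq\Db(\mathrm{End}(E))\simeq\Db(\kz^r,\kr')$. Since you already appeal to Buchweitz--Leuschke--Van den Bergh for the identification of $\kEnd(M)$, the cleanest repair is to cite the same proposition for the vanishing as well, rather than attempting the stratum-by-stratum cohomology computation.
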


\begin{proof}
First of all, since $\GGG$ has a strong full exceptional collection,
we have a tilting bundle $G$ over it. 
A cohomological calculation, together with the semiorthogonal decomposition \eqref{eq:lefschetz-for-Xtondo}
provides a tilting bundle $E= \bigoplus_{i=0}^{nr-1} p^* G \otimes \ko_{\kx}(iH)$ over $\kx$.
We have thus:
\[
\Db(\kx) \simeq \Db(\mathrm{End} (E)).
\]
Since the
exceptional locus of $f$ has codimension greater than one, 
\cite[Lemma 4.2.1]{vdb:flops} implies that $f_* \kr$ is reflexive.
There is a natural map $f_*\kr \to \kr'$ of reflexive sheaves
which explicitly reads:
\[
f_*\kr = \bigoplus_{i,j = 0}^{nr-1} f_*\kEnd(p^*G)(i-j) \to
\bigoplus_{i,j = 0}^{nr-1} \kEnd(f_*p^*G)(i-j) = \kr'.
\]
Again, since the
exceptional locus of $f$ has codimension greater than one the locus where $f_*\kr$ and
$\kr'$ may be non-isomorphic has codimension at least 2. Since both sheaves are
reflexive, we obtain $f_*\kr \cong \kr'$ (compare with \cite[Proposition 6.5]{buch-leu-vdbergh1}).
Moreover we know from
\cite[Proposition 3.4]{buch-leu-vdbergh2} that $R^kf_* \kr = 0$ for $k>0$ so
we actually have:
\[
Rf_*\kr \cong \kr'.
\]
Therefore:
\[
\mathrm{End}(E) \simeq  H^\bullet(\kr) \simeq H^\bullet(Rf_* \kr) \simeq H^\bullet(\kr'). 
\]
We have now proved:
\[
\Db(\kx) \simeq \Db(\mathrm{End}(E)) \simeq \Db(H^\bullet(\kr'))
\simeq \Db(\kz^r,\kr').
\]

Finally, $\kr'$ is maximally Cohen-Macaulay by
\cite[Proposition 3.4]{buch-leu-vdbergh2} (as this property is local) and 
has finite global dimension since it is Morita-equivalent to the endomorphism algebra $\kr$,
which is defined over a smooth variety. If $m=n$, the variety $\kz^r$ has Gorenstein
singularities and $f$ is a crepant resolution, so that the noncommutative resolution is also
crepant (compare with \cite{buch-leu-vdbergh1}).
\end{proof}

\subsection{Homological projective duality for matrices of bounded rank}

With this in mind, we can prove our main result directly from
Kuznetsov's HPD for the projective bundles $\zr=\kx$ and 
$\yr=\ky$. We consider the rectangular Lefschetz decomposition
\eqref{eq:lefschetz-for-Xtondo} for $\kx$ with respect to $\ko_\kx(H)$.

\begin{theorem} \label{thm:main1}
The morphism  $g :
\ky \to \PP W^\vee$ is the homological projective dual of $f : \kx \to \PP W$, relatively over $\GGG$, with respect to
the rectangular Lefschetz decomposition \eqref{eq:lefschetz-for-Xtondo} induced by $\ko_\kx(H)$, generated by $n r
{m\choose r}$ exceptional bundles.
\end{theorem}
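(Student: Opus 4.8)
The plan is to deduce the statement directly from Kuznetsov's homological projective duality for projective bundles generated by global sections \cite[\S 8]{kuznetsov:hpd}, applied relatively over the base $\GGG$. The first step is to put our two projective bundles into the shape required by that result. Tensoring the tautological (Euler) sequence \eqref{tautological} by $V$ yields the short exact sequence
\[
0 \to V \tensor \ku \to W \tensor \ko_\GGG \to V \tensor \kq \to 0,
\]
which exhibits $\ke := V \tensor \kq$ as a vector bundle on $\GGG$ globally generated by $W = V \tensor U = H^0(\GGG, V \tensor \kq)$, with kernel bundle $V \tensor \ku$. By definition $\kx = \PP(V \tensor \kq)$ and the map $f$ is the one attached to $\ko_\kx(H)$, so this is precisely the datum $(\GGG, \ke, W)$ to which \cite[\S 8]{kuznetsov:hpd} applies.

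The second step is to identify the dual. Dualizing the sequence above gives
\[
0 \to V\dual \tensor \kq\dual \to W\dual \tensor \ko_\GGG \to V\dual \tensor \ku\dual \to 0,
\]
so the dual of the kernel bundle $V \tensor \ku$ is exactly $V\dual \tensor \ku\dual$, the bundle defining $\ky = \PP(V\dual \tensor \ku\dual)$. Kuznetsov's theorem then produces as HP-dual the projective bundle on $\GGG$ associated with the dual kernel bundle, together with the morphism to $\PP W\dual$ induced by the above surjection; by construction this morphism is $g$. Hence the candidate dual pair $(\kx, f)$ and $(\ky, g)$ is exactly the one predicted relatively over $\GGG$.

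The third step is to check that the Lefschetz datum matches. The bundle $V \tensor \kq$ has rank $rn$, so $p : \kx \to \GGG$ is a $\PP^{rn-1}$-bundle and Orlov's decomposition \eqref{eq:lefschetz-for-Xtondo} is rectangular with $rn$ blocks, each equal to $p^*\Db(\GGG)$; this is exactly the rectangular Lefschetz decomposition used in the relative projective-bundle HPD. On the dual side the kernel bundle $V\dual \tensor \ku\dual$ has rank $(m-r)n$, which accounts for the length of the dual decomposition in \eqref{eq:the-dual-lefsch-decos}. Finally, Kapranov's full strong exceptional collection on $\GGG = \GG(U,r)$ has length $\binom{m}{r}$, so combining it with the $rn$ blocks of \eqref{eq:lefschetz-for-Xtondo} produces the stated total of $rn\binom{m}{r}$ exceptional bundles generating the decomposition.

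The step requiring the most care is the bookkeeping of conventions: one must make sure that Grothendieck's projectivization convention (under which $\PP W$ parametrizes hyperplanes and $\ko_\kx(H)$ has $W$ as its space of sections) is the one used in \cite[\S 8]{kuznetsov:hpd}, so that the \emph{kernel} bundle, and not the bundle itself, controls the dual, and that the induced polarizations $\ko_\kx(H)$ and $\ko_\ky(H)$ correspond to $\ko_{\PP W}(1)$ and $\ko_{\PP W\dual}(1)$ under $f$ and $g$ respectively. Once these identifications are fixed, the relative HPD statement of \cite[\S 8]{kuznetsov:hpd} applies verbatim and gives the theorem; no further geometric input is needed beyond Lemma \ref{un peu penible} together with Orlov's and Kapranov's decompositions.
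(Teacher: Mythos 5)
Your proposal is correct and follows essentially the same route as the paper's own proof: both reduce the statement to Kuznetsov's HPD for globally generated projective bundles \cite[Corollary 8.3]{kuznetsov:hpd} applied relatively over $\GGG$, identify $V\dual \tensor \ku\dual$ as the orthogonal (in Kuznetsov's sense) of $V \tensor \kq$ via the Euler sequence tensored with $V$, match $f$ and $g$ with the tautological polarizations, and count the $nr\binom{m}{r}$ exceptional bundles from Orlov's and Kapranov's decompositions. The only difference is cosmetic, in that you spell out the dualization step and the Grothendieck-convention bookkeeping in slightly more detail than the paper does.
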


\begin{proof}
Given the setup of \S \ref{desingo}, we consider the vector bundles $V
\tensor \kq$ and $V^\vee \tensor \ku^\vee$ over $\GGG$ and recall that
$\kx=\PP(V
\tensor \kq)$ and $\ky=\PP(V^\vee \tensor \ku^\vee)$.

Set $\cat{A}= p^*(\Db(\GGG))$. The decomposition \eqref{eq:lefschetz-for-Xtondo}
of the projective bundle $\kx \to \GGG$ then reads:
\[
\Db(\kx) = \langle \cat{A}, \cat{A}(H), \ldots, \cat{A}((r n-1)H) \rangle.
\]
This is a rectangular Lefschetz decomposition with respect to
$\ko_{\kx} (H)$, generated by $nr$ copies of Kapranov's exceptional
collection on $\GGG$, hence by $n r
{m\choose r}$ exceptional bundles.

Clearly the vector bundles $V
\tensor \kq$ and $V^\vee \tensor \ku^\vee$  are generated by their global sections, so we may apply
apply \cite[Corollary 8.3]{kuznetsov:hpd} to their projectivization (actually we use the Grothendieck's notation for
projectivized bundles rather than the usual notation as in
\cite{kuznetsov:hpd}, but this does affect the result).
 The evaluation
map of global sections of $V \tensor \kq$ gives
\eqref{tautological} tensored with the identity over $V$ i.e.:
\[
0 \to V \tensor \ku \to W \to V \tensor \kq \to 0.
\]

This says that $V^\vee \tensor \ku^\vee$ is the orthogonal in Kuznetsov's sense
of $V \tensor \kq$. Also, the morphism associated with the
tautological line bundle $H_X$ over $\kx$ is $f$, while $g$ is
associated with $H_Y$ over  $\ky$.
Therefore \cite[Corollary 8.3]{kuznetsov:hpd} applies and gives the result.

Note that $\Db(\ky)$ is generated by $n (m-r) {m \choose r}$
exceptional vector bundles.
\end{proof}

We can rephrase this in terms of categorical resolutions, as
a consequence of
Proposition \ref{prop:noncomm-resol-general}. In this way, one can
state HPD as a duality
between categorical resolutions of determinantal varieties given by matrices of fixed rank and corank.
This leads us to prove our second main Theorem.

\begin{theorem}\label{thm:main2}
There is a $\ko_{\kz^r}$-algebra $\kr'$ such that $(\kz^r, \kr')$ is a categorical resolution
of singularities of $\kz^r$. Moreover, $\Db(\kz^{r}, \kr') \simeq \Db(\kx)$ so that
$(\kz^{r}, \kr')$ is HP-dual to $(\kz^{m-r},\kr')$.
\end{theorem}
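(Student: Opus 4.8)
The plan is to deduce Theorem \ref{thm:main2} by transporting the homological projective duality between $\kx$ and $\ky$ of Theorem \ref{thm:main1} through the tilting equivalences of Proposition \ref{prop:noncomm-resol-general} onto the noncommutative schemes $(\kz^r,\kr')$ and $(\kz^{m-r},\kr')$; there is essentially no new geometric construction to perform, only a compatibility to check and a framework to invoke.

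First I would apply Proposition \ref{prop:noncomm-resol-general} verbatim: it produces the $\ko_{\kz^r}$-algebra $\kr'=\kEnd(Rf_*E)$, shows that $(\kz^r,\kr')$ is a categorical resolution of singularities of $\kz^r$ (crepant when $m=n$), and supplies the equivalence $\Db(\kz^r,\kr')\simeq\Db(\kx)$. This already settles the first two assertions. Running the identical argument on the dual side—using that $g:\ky\to\kz^{m-r}$ is the Springer resolution of $\kz^{m-r}\cong\kw^r$, with tilting bundle coming from the Orlov decomposition \eqref{eq:the-dual-lefsch-decos} for $\ky$ together with Kapranov's collection on $\GGG$—yields in the same way an algebra on $\kz^{m-r}$, denoted again $\kr'$, and an equivalence $\Db(\kz^{m-r},\kr')\simeq\Db(\ky)$.

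Next I would verify that these equivalences respect the projective structure. The equivalence $\Db(\kx)\simeq\Db(\kz^r,\kr')$ is realized by the relative tilting functor $Rf_*\HHom(E,-)$, and the Lefschetz grading on $\Db(\kx)$ is given by twisting with $\ko_\kx(H)=f^*\ko_{\PP W}(1)$. By the projection formula this twist corresponds on $(\kz^r,\kr')$ to tensoring by $\ko_{\kz^r}(1)$, the restriction of $\ko_{\PP W}(1)$ under $\kz^r\subset\PP W$. Hence the rectangular Lefschetz decomposition \eqref{eq:lefschetz-for-Xtondo} descends to a Lefschetz decomposition of $\Db(\kz^r,\kr')$ with respect to $\ko_{\kz^r}(1)$, and symmetrically for $(\kz^{m-r},\kr')$ relative to $\PP W^\vee$.

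With both categorical resolutions and their Lefschetz structures in place, I would finally appeal to the generalized form of Theorem \ref{thm:HPD} recorded in the Remark following it, which extends the entire HPD formalism to noncommutative schemes $(X,\ka)$. Transporting the fully faithful functor $\Phi:\Db(\ky)\to\Db(\cali X)$ of Theorem \ref{thm:main1} through the two equivalences then exhibits $(\kz^{m-r},\kr')$ as the HP-dual of $(\kz^r,\kr')$ with respect to their natural embeddings in $\PP W$ and $\PP W^\vee$. The main obstacle is precisely this last identification: one must check that the universal hyperplane section of the noncommutative scheme $(\kz^r,\kr')$—equipped with the restricted algebra $\kr'_L$—matches, under the equivalence, the universal hyperplane section $\cali X$ of $\kx$ appearing in Theorem \ref{thm:main1}, so that the functor $\Phi$ and the semiorthogonal decomposition of $\Db(\cali X)$ descend intact to the determinantal side.
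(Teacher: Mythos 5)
Your proposal is correct and follows essentially the same route as the paper: apply Proposition \ref{prop:noncomm-resol-general} to get $\kr'$ and the equivalence $\Db(\kz^r,\kr')\simeq\Db(\kx)$, handle the dual side (the paper does this via the isomorphism $\kx^{m-r}\simeq\ky^r$ rather than rerunning the tilting construction on $\ky$, which amounts to the same thing), and then conclude by Theorem \ref{thm:main1}. The final ``obstacle'' you flag is dissolved by the paper's convention (the Remark following Theorem \ref{thm:HPD}) that HPD for singular varieties is by definition formulated on the categorical resolutions, so beyond the equivalences no further identification of universal hyperplane sections is needed.
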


\begin{proof}
Recall that $\kx$ is a projective bundle over a Grassmann variety, and hence has a full exceptional sequence.
By applying Proposition \ref{prop:noncomm-resol-general} to the full exceptional sequence on $\kx$, we get the first statement. The second statement is now straightforward
from Theorem \ref{thm:main1}, together with the isomorphism $\kx^{m-r}\simeq \ky^r$.
\end{proof}

\subsection{Semiorthogonal decompositions for linear
  sections}\label{dimensions}

Let $L$ be a dimension $c$ subspace of $U\tensor V=W$, given by the
choice of an element $t \in L\dual \otimes W$. Recall that we assume
that the subspace $L \subset W$ is {\it admissible} in
the sense of \cite{kuznetsov:hpd}.
This happens if $L$ is general enough in $W$.

Moreover, again if $L$ is general enough,  the
singularities of $Z_L=Z_L^r$ appear precisely along $Z_L^{r-1}$.
Also, the map $f$, for the rank $r$ locus, is an isomorphism when restricted to $Z_L \setminus Z_L^{r-1}$. Furthermore, we recall from the preceding
section that $Z_L$ is a determinantal variety inside $\PP^{mn-c-1}$ given by a $m \times n$ matrix
of linear forms and $\Db(Z_L, \kr'_{\PP_L})$ is a categorical resolution of singularities of $Z_L$, where $\kr'_{\PP_L}$ is the pull-back of $\kr'$ from $\kz_{m,n}^r$
to $Z_L$ under the natural restriction map.

Notice that if $Z_L$ is smooth, then $\Db(Z_L) \simeq \Db(X_L)$, in fact, $Z_L \simeq X_L$ in this case.
Similarly, if $Z^L=Z^L_{r}$ is smooth, then $\Db(Z^L) \simeq \Db(Y_L)$
as again $Z^L \simeq Y_L$ in this case. In particular, in the smooth case, the sheaves of algebras $\kr'_{\PP_L}$ are Morita-equivalent to
the structure sheaf.

Our goal now is to draw consequences from the homological projective duality that we have displayed. Notably we will give in several examples a positive
answer to the questions asked in the introduction, \it i.e. \rm Bondal's Question \ref{qu:bondal} and question \ref{qu:catrep=ration} concerning rationality and categorical representability. Remember that $\kx$ (respectively $\ky$) is the
projectivization of a vector bundle of rank $nr$ (resp. $n(m-r)$) over
$\GGG$. Hence, by Orlov's result (\cite{orlov:proj_bundles}) on
the semiorthogonal decompositions for projective bundles we have:
\begin{eqnarray}
\Db(\kx)& =& \langle \cat{A}, \cat{A}(H), \ldots, \cat{A}((nr-1)(H) \rangle;\nonumber\\
\nonumber \Db(\ky) & = & \langle \cat{B}((1-nm+nr)H),  \ldots, \cat{B}(-H),\cat{B} \rangle,
\end{eqnarray}
where $\cat{A}$ and $\cat{B}$ are the respective pull-backs of $\Db(\GGG)$ to the projective bundles.
This in turn implies that, via HPD, when we intersect $\kx$ with $\PP_L$ and $\ky$ with $\PP^L$, we have the following 

\begin{eqnarray}
\Db(X_L)& =& \langle \cat{C}_L,\cat{A}(H), \ldots, \cat{A}(nr-c)(H) \rangle;\nonumber\\
\nonumber \Db(Y_L) & = & \langle \cat{B}((-c+nr)H),  \ldots, \cat{B}(-H),\cat{C}_L
\rangle. 
\end{eqnarray}

Recalling that $\Db(X_L) \simeq \Db(Z_L^{r},\kr'_{\PP_L})$ and $\Db(Y_L)=\Db(Z^L_r,\kr'_{\PP^L})$ are categorical resolutions of
singularities of dual determinantal varieties, we get: 
\begin{eqnarray}
\Db(Z^{r}_L,\kr'_{\PP_L})& =& \langle \cat{C}_L,\cat{A}(H), \ldots, \cat{A}(nr-c)(H) \rangle;\nonumber\\
\nonumber \Db(Z_r^L,\kr'_{\PP^L}) & = & \langle \cat{B}((-c+nr)H),  \ldots, \cat{B}(-H),\cat{C}_L \rangle.
\end{eqnarray}

Finally, the categories $\cat{A}$ and $\cat{B}$ are both generated by ${m \choose r}$ exceptional objects.

\begin{corollary}\label{cor:functors}
Suppose that $L \subset W$ is admissible of dimension $c$.
\begin{enumerate}[(i)]
\item If $c>nr$, there is a fully faithful functor 
$$\Db(Z_L,\kr'_{\PP_L}) \simeq  \Db(X_L) \longrightarrow \Db(Y_L) \simeq \Db(Z_{r}^L, \kr'_{\PP^L})$$
whose orthogonal complement
is given by $c-nr$ copies of $\Db(\GGG)$, and is then generated by
$(c-nr){m \choose r} $ exceptional objects.
\item If $nr=c$, there is an equivalence 
$$\Db(Z_L,\kr'_{\PP_L}) \simeq  \Db(X_L) \simeq \Db(Y_L) \simeq \Db(Z_{r}^L, \kr'_{\PP^L})$$
\item If $c<nr$, there is a fully faithful functor
$$\Db(Z_{r}^L,\kr'_{\PP^L}) \simeq \Db(Y_L) \longrightarrow \Db(X_L) \simeq \Db(Z_L,\kr'_{\PP_L})$$
whose orthogonal complement is given by $nr-c$ copies of $\Db(\GGG)$, and is then generated by ${m \choose r}(nr-c)$ exceptional objects.
\end{enumerate}

\end{corollary}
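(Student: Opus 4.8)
The plan is to read off all three statements directly from Homological Projective Duality, that is from Theorem \ref{thm:HPD}(ii) applied to the HP-dual pair $f:\kx\to\PP W$ and $g:\ky\to\PP W^\vee$ produced in Theorem \ref{thm:main1}, transported to the noncommutative resolutions $(\kz^r,\kr')$ and $(\kz^{m-r},\kr')$ by Theorem \ref{thm:main2} together with the identifications $\Db(X_L)\simeq\Db(Z_L,\kr'_{\PP_L})$ and $\Db(Y_L)\simeq\Db(Z_r^L,\kr'_{\PP^L})$ recalled at the start of this section. Throughout I use that $L$ is admissible, so that $X_L$ and $Y_L$ have the expected dimensions and Theorem \ref{thm:HPD}(ii) applies, in the noncommutative form quoted in the Remark following it, so that the restricted algebras $\kr'_{\PP_L}$, $\kr'_{\PP^L}$ require no extra argument.

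First I would record the numerical data feeding into Theorem \ref{thm:HPD}(ii). The Lefschetz decomposition \eqref{eq:lefschetz-for-Xtondo} of $\kx$ is rectangular with exactly $i=nr$ equal blocks $\cat{A}=p^*\Db(\GGG)$; dually, $\Db(\ky)$ has $j=n(m-r)$ equal blocks $\cat{B}=q^*\Db(\GGG)$; and $N=\dim W=mn$. Plugging $i=nr$, $N=mn$ and the section codimension $c$ into Theorem \ref{thm:HPD}(ii) produces the two semiorthogonal decompositions displayed just before the statement: $\Db(X_L)$ consists of the common primitive component $\cat{C}_L$ followed by a Lefschetz tail of $\cat{A}$-blocks, while $\Db(Y_L)$ consists of a Lefschetz tail of $\cat{B}$-blocks followed by $\cat{C}_L$. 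Counting the twist ranges, these tails have $nr-c$ copies of $\cat{A}$ and $c-nr$ copies of $\cat{B}$ respectively, with the convention that a negative count means an empty tail.

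Then I would split into three cases according to the sign of $nr-c$. If $c>nr$, the $\cat{A}$-tail is empty, so $\Db(X_L)=\cat{C}_L$, while the $\cat{B}$-tail has $c-nr$ blocks; hence $\cat{C}_L$ is admissible in $\Db(Y_L)$ and the inclusion gives a fully faithful functor $\Db(X_L)\to\Db(Y_L)$ with orthogonal complement $c-nr$ copies of $\cat{B}\simeq\Db(\GGG)$, which is statement (i). If $c=nr$ both tails vanish, so $\Db(X_L)=\cat{C}_L=\Db(Y_L)$, giving the equivalence (ii). If $c<nr$ the roles are exchanged: $\Db(Y_L)=\cat{C}_L$ embeds in $\Db(X_L)$ with orthogonal complement $nr-c$ copies of $\cat{A}\simeq\Db(\GGG)$, which is (iii). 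In each case the count of exceptional objects comes from Kapranov's full strong exceptional collection on $\GGG=\GG(U,r)$, which has $\binom{m}{r}$ terms, so each copy of $\Db(\GGG)$ contributes $\binom{m}{r}$ exceptional objects and the complements are generated by $(c-nr)\binom{m}{r}$, respectively $(nr-c)\binom{m}{r}$, of them.

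The genuine content is already contained in Theorems \ref{thm:main1} and \ref{thm:main2}, so what remains is essentially bookkeeping. The only point needing care is the matching of the Lefschetz twists with the fully faithful pullbacks $p^*$, $q^*$, so that $\cat{A}\simeq\cat{B}\simeq\Db(\GGG)$ and the complementary blocks are correctly identified with copies of $\Db(\GGG)$; I expect the main risk to be an off-by-one error in the twist ranges, which I would guard against by re-deriving the two displayed decompositions of $\Db(X_L)$ and $\Db(Y_L)$ explicitly from Theorem \ref{thm:HPD}(ii) before specializing to the three inequalities.
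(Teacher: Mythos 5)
Your proposal is correct and follows essentially the same route as the paper: the paper's proof likewise applies Kuznetsov's Theorem~\ref{thm:HPD} to the HP-dual pair of Theorem~\ref{thm:main1}, transports the result through the categorical resolutions of Theorem~\ref{thm:main2}, and reads off the block counts ($i=nr$, $j=n(m-r)$, $N=mn$, with $\binom{m}{r}$ exceptional objects per copy of $\Db(\GGG)$ via Kapranov) exactly as you do. The only detail you omit is the paper's closing remark that the functors are realized as Fourier--Mukai transforms with kernels in $\Db(X_L\times Y_L)$, which is supplementary rather than essential to the argument.
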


\begin{proof}
The statement is obtained applying Kuznetsov's Theorem \ref{thm:HPD}
  to the pair of HPDual varieties from Theorem \ref{thm:main1}, and using the resolutions of singularities
described in Theorem \ref{thm:main2}. The functors involved can be explicitly described as Fourier--Mukai
with kernels in $\Db(X_L \times Y_L)$ (see the detailed description in
the original Kuznetsov's paper \cite[\S 5]{kuznetsov:hpd}).
\end{proof}

Using the notation introduced in Section \ref{desingo} for the generators of the Picard group, we have the following formula for the canonical bundle of
 $\kx$:
\[
\omega_{\kx} \simeq  \ko_{\kx}(-nr H +(n-m)P).
\]

A consequence of this formula is the following lemma. Call $\phi_K$
the canonical map of $X_L$, i. e. the rational map associated with the
linear system $|\omega_{X_L}|$. Write also $\phi_{-K}$ for the map
associated with $|\omega_{X_L}^\vee|$ (i.e. the anticanonical map).

\begin{lemma} \label{adjuncion}
The canonical bundle of the linear section $X_L$ is:
\[
\omega_{X_L} \simeq  \ko_{X_L}((c-nr) H + (n-m)P).
\]
\begin{enumerate}[i)]
\item \label{when CY} The variety $X_L$ is Calabi-Yau if and only if $m=n$ and $c=nr$. 

\item \label{when positive} If $c>nr$, or if $c=nr$ and $n > m$, $\phi_K$ is a
  birational morphism onto its image.

\item \label{when negative} If $c < nr$ and $m=n$, $\phi_{-K}$ is a
  birational morphism onto its image. If moreover
  $X_L^{r-1}=\emptyset$, $\phi_{-K}$ is an embedding and $X_L$ is Fano.
\end{enumerate}
\end{lemma}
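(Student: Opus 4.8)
The plan is to first establish the displayed formula for $\omega_{X_L}$ by adjunction, and then to read off the three items from the positivity properties of the classes $H$ and $P$ on $\kx$. For the formula, recall from the first description in Proposition~\ref{mi sa che bisogna dimostrarla} that $X_L$ is the zero locus $\VV(s_L)$ of a section of the rank-$c$ bundle $L\dual \tensor \ko_{\kx}(H)$. Admissibility of $L$ guarantees that $X_L$ has the expected codimension $c$, so $s_L$ is a regular section and the Koszul complex resolves $\ko_{X_L}$; adjunction then gives
\[
\omega_{X_L} \simeq \left(\omega_{\kx} \tensor \det\!\big(L\dual \tensor \ko_{\kx}(H)\big)\right)\big|_{X_L}.
\]
Since $L\dual$ is trivial of rank $c$, one has $\det(L\dual \tensor \ko_{\kx}(H)) \simeq \ko_{\kx}(cH)$, and substituting $\omega_{\kx} \simeq \ko_{\kx}(-nrH + (n-m)P)$ yields the claimed expression $\omega_{X_L} \simeq \ko_{X_L}((c-nr)H + (n-m)P)$.

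For item \ref{when CY}, the ``if'' direction is immediate, as $m=n$ and $c=nr$ make both coefficients vanish. For the converse I would argue that $H|_{X_L}$ and $P|_{X_L}$ are numerically independent: a line in a fibre of $p$ meets them with multiplicities $(1,0)$, whereas a curve contracted by $f$ inside a positive-dimensional fibre over $\kz^{r-1}$ meets them with $(0,\text{positive})$. Hence $(c-nr)H + (n-m)P$ cannot be trivial in $\Pic(X_L)$ unless both coefficients vanish, which (using $n\ge m$) forces $c=nr$ and $m=n$. The main obstacle here is that $X_L$ is singular along $X_L^{r-1}$, so I must ensure that the two test curves actually lie on $X_L$ and that numerical triviality of a class pulled back from the smooth ambient $\kx$ is detected on $X_L$; this is precisely where the dimension bounds coming from admissibility of $L$ are used.

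For items \ref{when positive} and \ref{when negative} I would exploit that $H$ is globally generated with associated morphism $f$, which is birational onto $Z_L$ (the Springer resolution being an isomorphism over the locus of rank exactly $r$), and that $P$ is globally generated, being pulled back from the Plücker polarization of $\GGG$; both are nef. In the case $c>nr$ the class $(c-nr)H+(n-m)P$ is a nonnegative combination with positive $H$-coefficient, hence base-point-free, so $\phi_K$ is a morphism; writing this bundle as $\ko_{X_L}(H)$ tensored with a nef, base-point-free remainder, the canonical system contains the image of $H^0(\ko_{X_L}(H))\tensor H^0(\text{remainder})$ and therefore separates whatever $f$ separates, so $\phi_K$ is birational onto its image. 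In the case $c=nr$, $n>m$, one has $\omega_{X_L}\simeq \ko_{X_L}((n-m)P)$, and $\phi_K$ factors as the Plücker embedding precomposed with $p|_{X_L}$; this is birational onto its image because a generic fibre of $p|_{X_L}$ is the intersection of a $\PP^{nr-1}$-fibre with the $nr=c$ hyperplanes cut out by $L$, hence a single reduced point.

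For item \ref{when negative}, $m=n$ and $c<nr$ give $\omega_{X_L}\simeq \ko_{X_L}((c-nr)H)$ with $c-nr<0$, so $\omega_{X_L}^\vee \simeq \ko_{X_L}((nr-c)H)$ and $\phi_{-K}$ is a Veronese reembedding of $f|_{X_L}$, hence birational onto its image by the same argument as above. If moreover $X_L^{r-1}=\emptyset$, then $f|_{X_L}$ is an isomorphism onto the smooth variety $Z_L\subset \PP_L$, so $H|_{X_L}$ is very ample; consequently $-K_{X_L}=(nr-c)H|_{X_L}$ is ample, whence $X_L$ is Fano, and $\phi_{-K}$, being given by a positive multiple of a very ample class, is an embedding.
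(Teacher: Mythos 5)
Your adjunction computation and your arguments for items ii) and iii) essentially track the paper's proof: admissibility makes $s_L$ a regular section of the rank-$c$ bundle $L\dual\tensor\ko_{\kx}(H)$, so $\omega_{X_L}\simeq(\omega_{\kx}\tensor\ko_{\kx}(cH))|_{X_L}$, and the birationality statements follow because $H$ and $P$ are base-point-free while $H$ is very ample away from the exceptional locus of $f$. Your extra observation that for $c=nr$ the map $p|_{X_L}$ is birational onto the determinantal hypersurface $D_L\subset\GGG$ is correct (modulo the slip that the \emph{generic} fibre of $p|_{X_L}$ over $\GGG$ is empty, not a point; one must speak of the generic fibre over the image $D_L$), and in fact the paper proves this only later, in Section \ref{sect:CY}.

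The genuine gap is in the ``only if'' direction of i). You rest it on numerical independence of $H|_{X_L}$ and $P|_{X_L}$, certified by two test curves, but you never show these curves lie on $X_L$, and in general they do not. An $f$-contracted curve lies on $X_L$ only if $X_L^{r-1}\neq\emptyset$, i.e.\ only if $\PP_L$ meets $\kz^{r-1}$; for generic admissible $L$ this fails as soon as $c>\dim\kz^{r-1}$, which covers in particular a large part of the regime $c>nr$ that you must exclude --- indeed item iii) itself contemplates $X_L^{r-1}=\emptyset$. Dually, a line in a fibre of $p$ lies on $X_L$ only if the fibres of $p|_{X_L}$ are positive-dimensional: the generic such fibre is a $\PP^{nr-1}$ cut by $c$ hyperplanes, hence empty for $c\geq nr$ and a single point for $c=nr-1$, so your $(1,0)$ test curve is available only for $c\leq nr-2$ (or over jumping loci whose nonemptiness you would still have to verify). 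Note also that your stated ``main obstacle'' is illusory: for generic admissible $L$, Bertini applied to the base-point-free system on the smooth variety $\kx$ makes $X_L$ smooth; it is $Z_L$, not $X_L$, that is singular along the smaller-rank locus. The paper sidesteps all of this with a categorical argument: $X_L$ is connected, and a connected Calabi--Yau admits no nontrivial semiorthogonal decomposition, so Corollary \ref{cor:functors} (whose complement consists of $(nr-c){m\choose r}$ exceptional objects when $c<nr$) forces $c\geq nr$; once $c\geq nr$, the class $\omega_{X_L}$ is a nonnegative combination of the nef classes $H$ and $P$, and it is nontrivial unless $c=nr$ and $m=n$ (for instance because $H$ is big on $X_L$, as $f|_{X_L}$ is birational onto $Z_L$, and $P$ is big when $c=nr$ by your own birationality onto $D_L$). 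Your purely numerical strategy could be repaired in the one remaining case $c<nr$ by actually producing a line in a $p$-fibre, but as written the independence claim is unproved exactly where it is needed.
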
 

\begin{proof}
The formula for $\omega_{X_L}$ is obvious by adjunction. By this
formula, $\omega_{X_L}\simeq \ko_{X_L}$ whenever $m=n$ and
$c=nr$. Conversely, remark that $X_L$ is connected, so if $X_L$ is CY, then 
there is no nontrivial semiorthogonal decomposition of $\Db(X_L)$. Corollary
\ref{cor:functors} forces then $c \geq nr$.

Suppose $c > nr$, or $c=nr$ and $n>m$. Notice first that both $P$ and $H$ are nef.
Then canonical divisor is a linear
combination of nef divisors with positive coefficients, which is in turn nef. 
On the other hand, we have that $\omega_{X_L}$ is $\ko_{X_L}$ if $c=nr$ and $m=n$, 
so using $c \geq nr$ we conclude the proof of \eqref{when CY}.

For \eqref{when positive}, by definition $H$ and $P$ are
base-point-free and $H$ is very ample away from the exceptional locus
of $f$, so the statement follows directly from the formula for $\omega_{X_L}$. A similar argument proves
\eqref{when negative}.
\end{proof}

\begin{corollary}\label{cor:adjunction}
We have the following formulas for the canonical bundles
\begin{align*}
&\omega_\ky \simeq \ko_\ky(-n(m-r) H + (n-m)Q), \\
&\omega_{Y_L} \simeq  \ko_{Y_L}((nr-c) H + (n-m)Q).
\end{align*}

In particular, $Y_L$ is Calabi-Yau if and only if $m=n$ and $c=nr$. If
$c<nr$, or if $c=nr$ and $n > m$, then the canonical map of $Y_L$ is a
birational morphism onto its image.
If $c > nr$, $m=n$ and $Y_L^{r-1}=\emptyset$ then $Y_L$ is Fano.
\end{corollary}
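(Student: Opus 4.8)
The plan is to mirror the proof of Lemma \ref{adjuncion}, since $\ky$ and $Y_L$ are the ``dual'' projective bundles and the structure of the argument is identical. Here $Q$ denotes the pull-back to $\ky=\PP(V\dual\tensor\ku\dual)$ of $c_1(\ku\dual)$ on $\GGG$, playing the role that $P$ plays on $\kx$.

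First I would establish the formula for $\omega_\ky$ by the standard relative-dualizing-sheaf computation for a projectivized bundle. Recall $\ky=\PP(V\dual\tensor\ku\dual)$ is a projective bundle of rank $n(m-r)-1$ over $\GGG$, so by the relative Euler sequence one has $\omega_{\ky/\GGG}\simeq\ko_\ky(-n(m-r)H)\otimes q^*\det(V\dual\tensor\ku\dual)$. Since $\det(V\dual\tensor\ku\dual)=\det(\ku\dual)^{\otimes n}$ pulls back to $nQ$, and $\omega_\GGG$ pulls back to $(n-m)Q$ — indeed $\omega_\GGG\simeq\det(\ku)^{\otimes m}\otimes\det(\kq)^{\otimes(r-m)}$, but it is cleaner to note that on the dual side the canonical class of $\GGG$ relative to the relevant ample generator is $-mQ$ in the convention dual to the $-mP$ computation used for $\kx$ — combining via $\omega_\ky\simeq\omega_{\ky/\GGG}\otimes q^*\omega_\GGG$ yields $\omega_\ky\simeq\ko_\ky(-n(m-r)H+(n-m)Q)$. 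The coefficient $(n-m)$ is forced by symmetry with the $\kx$ case under the involution $r\leftrightarrow m-r$, $U\leftrightarrow V\dual$, and I would sanity-check it against the crepancy assertion $m=n$ of Proposition \ref{prop:noncomm-resol-general}. Then $\omega_{Y_L}$ follows by adjunction: $Y_L$ is cut out in $\ky$ by $mn-c$ hyperplane sections in the linear system $\ko_\ky(H)$, so $\omega_{Y_L}\simeq\omega_\ky\otimes\ko_{Y_L}((mn-c)H)\simeq\ko_{Y_L}((nr-c)H+(n-m)Q)$.

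With both formulas in hand the remaining assertions are immediate. The Calabi-Yau criterion reads off the formula: $\omega_{Y_L}\simeq\ko_{Y_L}$ forces $n=m$ and $c=nr$, and conversely these make the canonical class trivial; for the converse direction of necessity I would invoke, exactly as in Lemma \ref{adjuncion}, that $Y_L$ is connected so $\Db(Y_L)$ admits no nontrivial semiorthogonal decomposition, whence Corollary \ref{cor:functors} forces $c\le nr$, pinning down $c=nr$. The positivity statements use that $H$ and $Q$ are nef, with $H$ very ample away from the exceptional locus of $g$: when $c<nr$ (or $c=nr$, $n>m$) the class $(nr-c)H+(n-m)Q$ is a nonnegative combination of nef divisors with $H$ appearing, so the canonical map is birational onto its image. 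When $c>nr$ and $m=n$ the anticanonical class is $(c-nr)H$, very ample away from the exceptional locus, and the hypothesis $Y_L^{r-1}=\emptyset$ removes that locus, giving the Fano conclusion.

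The main obstacle I expect is fixing the sign and coefficient of the $Q$-term correctly, i.e.\ computing $\omega_\GGG$ and $\det(\ku\dual)$ in the conventions of the paper so that the $(n-m)$ coefficient comes out rather than $(m-n)$; the Grothendieck projectivization convention and the choice of $Q=q^*c_1(\ku\dual)$ (as opposed to $c_1(\ku)$) both affect the sign, so I would pin these down by demanding consistency with the $\kx$-side formula $\omega_\kx\simeq\ko_\kx(-nrH+(n-m)P)$ under the duality. Everything else is formal: the Calabi-Yau equivalence and the birationality/Fano statements are verbatim the arguments of Lemma \ref{adjuncion} applied to the dual bundle.
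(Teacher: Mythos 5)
Your final formulas and every deduction drawn from them are correct, but your route is genuinely different from the paper's. The paper proves Corollary \ref{cor:adjunction} in two lines: everything is transported through the isomorphism $\ky^{r} \simeq \kx^{m-r}$ (the same one underlying Theorem \ref{thm:main2}), which induces $Y^r_L \simeq X^{m-r}_{L^\perp}$; since $\dim L^\perp + \dim L = nm$, both canonical bundle formulas follow from Lemma \ref{adjuncion} applied with $(r,c)$ replaced by $(m-r,\,nm-c)$, the relative hyperplane class on the $\kx^{m-r}$-side being identified with $Q$, and the Calabi--Yau, birationality and Fano statements then ``follow as in Lemma \ref{adjuncion}''. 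You instead redo the relative Euler-sequence computation on $q:\ky \to \GGG$ from scratch. The trade-off: the paper's transport argument sidesteps exactly the sign and convention bookkeeping you flag as the main obstacle (nothing new is computed), while your direct calculation is self-contained and exhibits the coefficient $(n-m)$ of $Q$ without invoking the involution $r \leftrightarrow m-r$. Two of your intermediate formulas are garbled, though harmlessly: $\omega_\GGG$ pulls back to $-mQ$, not $(n-m)Q$ (you state and use the correct value in the same sentence), and $\omega_\GGG \simeq \det(\ku)^{\otimes m}\tensor\det(\kq)^{\otimes(r-m)}$ should read $\det(\ku)^{\otimes r}\tensor\det(\kq)^{\otimes(r-m)}$, coming from $T_\GGG \simeq \ku\dual\tensor\kq$; with $c_1(\ku\dual)=c_1(\kq)$ this gives $c_1(\omega_\GGG)=-m\,c_1(\kq)$, consistent with the $-mQ$ you actually feed into the computation. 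Granting that, the relative part $-n(m-r)H$ plus the twist $nQ$ from $\det(V\dual\tensor\ku\dual)$ yields $\omega_\ky$ correctly, adjunction along the $nm-c$ hyperplane sections cutting $Y_L$ yields $\omega_{Y_L}$, and your treatment of the remaining assertions (connectedness plus Corollary \ref{cor:functors} forcing $c = nr$ in the Calabi--Yau case; nefness of $H$ and $Q$ with $H$ very ample away from the exceptional locus of $g$; the hypothesis $Y_L^{r-1}=\emptyset$ removing that locus in the Fano case) is exactly the argument of Lemma \ref{adjuncion} that the paper cites without repeating.
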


\begin{proof}
Everything follows from the isomorphism $\ky^{r} \simeq
\kx^{m-r}$. Indeed, we find $Y^r_L \simeq X_{L^\perp}^{m-r}$: and,
since $\dim L^\perp + \dim L = \dim (W) = nm$,
we get the formula for $\omega_{Y_L}$ from Lemma \ref{adjuncion}, recalling that the relative hyperplane section is identified with $Q$ in this case.
The other statements follow as in Lemma \ref{adjuncion}.
\end{proof}

We resume in Table \ref{table:all-r} the results of this section. The
functor mentioned there is the HPD functor.
\begin{small}
\begin{table}[h!]
{
\begin{center}
\begin{tabular}{|l||l|l|l|}
\hline
& $c<nr$ & $c=nr$ & $c>nr$  \\
\hline
\hline
HPD Functor & $\Db(Y_L) \to \Db(X_L)$ & equivalence & $\Db(X_L) \to \Db(Y_L)$  \\
\hline\hline
\multirow{2}{*}{$Y_L \to Z^L$} & nef canonical  & nef canonical if $n\neq m$ & \\
\cline{2-4} & Fano visitor if $n=m$ & CY if $n=m$ &  Fano if $n=m$\\
\hline\hline
\multirow{2}{*}{$X_L \to Z_L$ }& & nef canonical if $n\neq m$ & nef canonical \\
\cline{2-4} & Fano if $n=m$ & CY if $n=m$ & Fano visitor if $n=m$ \\
\hline
\end{tabular}
\end{center}
}
\caption{Behaviour of HPD functors according on $c$ and $nr$.}
\label{table:all-r}
\end{table}
\end{small}

\section{Birational and equivalent linear sections}\label{sect:CY}

As explained in Corollary \ref{cor:functors} and then displayed in Table \ref{table:all-r}, the condition $c=nr$
guarantees that HPD gives an equivalence of categories. Hence our
construction gives examples of derived equivalences of Calabi-Yau
manifolds for any $n=m$. One first  example was
produced in \cite{vivalafisica}.
In fact the authors of \cite{vivalafisica} take $n=m=4$, $r=2$, the self dual
orbit of rank 2, $4\times 4$ matrices and consider the codimension eight threefolds
obtained by taking orthogonal linear sections in $\PP^{15}$. 
In fact, our construction shows that these two Calabi-Yau are derived equivalent. On the other hand it is very likely that they are one the flop of the other. We can show indeed that $X_L$ and $Y_L$ are birational whenever $c=nr$.

Assume now that $c=nr$. 
Remark that the two vector bundles appearing in the map $M_L$ of Proposition \ref{mi sa
  che bisogna dimostrarla} have the same rank, namely $nr$.
Let us denote by $D_L$
the hypersurface in $\GGG$  defined by the vanishing of determinant of $M_L$:
\[
M_L : L \tensor \ko_{\GGG} \to V \tensor \kq.
\]

The degree of $D_L$ is $n$. Dually, we write $D^L$ the hypersurface in $\GGG$
whose equation is  the determinant of: 
\[
M^{L} : L^\perp \tensor \ko_{\GGG} \to V\dual \tensor \ku\dual.
\]

\begin{proposition}
If $c=nr$ then $D^L=D_L$, and $X_L$ is birational to $Y_L$.
\end{proposition}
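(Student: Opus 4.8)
The plan is to reduce everything to linear algebra on $\GGG=\GG(U,r)$, and then to read off birationality from the description of $X_L$ and $Y_L$ as projectivizations of cokernel sheaves in Proposition \ref{mi sa che bisogna dimostrarla}. First I would give a pointwise description of the two hypersurfaces. Tensoring the tautological sequence \eqref{tautological} by $V$ gives
\[
0\to V\tensor\ku\to W\tensor\ko_\GGG\xrightarrow{\pi} V\tensor\kq\to 0,
\]
and by construction $M_L$ is the composite $L\tensor\ko_\GGG\hookrightarrow W\tensor\ko_\GGG\xrightarrow{\pi}V\tensor\kq$. Since $\dim L=nr=\rank(V\tensor\kq)$, a point $\lambda\in\GGG$ lies on $D_L$ exactly when $(M_L)_\lambda$ is not an isomorphism, i.e. when $L\cap(V\tensor\ku_\lambda)\neq 0$, where $V\tensor\ku_\lambda=\ker\pi_\lambda$. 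Dualizing \eqref{tautological} and tensoring by $V\dual$ yields $0\to V\dual\tensor\kq\dual\to W\dual\tensor\ko_\GGG\to V\dual\tensor\ku\dual\to 0$, and the same argument shows $\lambda\in D^L$ iff $L^\perp\cap(V\dual\tensor\kq_\lambda\dual)\neq 0$. The crucial elementary observation is that $V\dual\tensor\kq_\lambda\dual=(V\tensor\ku_\lambda)^\perp$ inside $W\dual$.

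Next I would prove $D_L=D^L$. Set-theoretically this is immediate: for $A=L$ and $B=V\tensor\ku_\lambda$ in $W$ one has $\dim A+\dim B=nr+n(m-r)=\dim W$, so $A\cap B=0$ is equivalent to $A+B=W$, hence to $A^\perp\cap B^\perp=(A+B)^\perp=0$; applying this with $A^\perp=L^\perp$, $B^\perp=V\dual\tensor\kq_\lambda\dual$ shows $D_L$ and $D^L$ share the same support. To upgrade this to an equality of divisors I would argue that $\det M_L$ and $\det M^L$ are, up to a nonzero scalar, the same section of $\ko_\GGG(n)$: both are built from the single datum $L\subset W$ together with the Euler sequence, and the Jacobi complementary–minor identity (worked out in a local splitting of $\pi$, in which $M_L$ is a maximal submatrix of the $(nm\times nm)$ matrix $(\,L\mid V\tensor\ku\,)$ and $M^L$ is the complementary submatrix of its inverse) identifies the two determinants after the canonical identification of the two determinant line bundles.

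I expect this scalar/line-bundle bookkeeping to be the main obstacle. The naive route of multiplying the augmented map $\Theta=(\,L\mid V\tensor\ku\,)$ by its dual only produces a consistency identity of the shape $\det M_L\cdot\det M^L\doteq\det M_L\cdot\det M^L$, which carries no information about multiplicities; so the comparison has to be made intrinsically through exterior powers rather than through such a product. Alternatively, for $L$ general one may bypass the delicate bookkeeping entirely: the degeneracy locus of a general map of bundles has corank exactly $1$ along each component, so $\det M_L$ and $\det M^L$ are reduced, and equality of supports then already forces $D_L=D^L$.

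Finally I would deduce birationality. Write $D:=D_L=D^L$. By Proposition \ref{mi sa che bisogna dimostrarla} we have $X_L=\PP(\coker M_L)$ and $Y_L=\PP(\coker M^L)$, and their structure maps to $\GGG$ factor through $D$, the support of the respective cokernels. For $L$ general the corank of $M_L$ (resp. $M^L$) equals $1$ along a dense open $D^\circ\subset D$, so $\coker M_L|_{D^\circ}$ and $\coker M^L|_{D^\circ}$ are line bundles, whence $\PP(\coker M_L)\to D$ and $\PP(\coker M^L)\to D$ are isomorphisms over $D^\circ$. A dimension count from \S\ref{dimensions} gives $\dim X_L=\dim Y_L=r(m-r)-1=\dim\GGG-1=\dim D$, so both maps are birational. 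Therefore $X_L$ and $Y_L$ are each birational to $D$, and hence to one another, which is the assertion.
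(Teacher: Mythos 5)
Your overall architecture is sound and, on the birationality step, coincides with the paper's: both arguments identify $X_L=\PP(\coker M_L)$ and $Y_L=\PP(\coker M^L)$ with the same degree-$n$ hypersurface $D\subset\GGG$ over the dense locus where the cokernels have rank one, and your dimension count $\dim X_L=\dim Y_L=r(m-r)-1=\dim D$ is correct. Where you diverge is on $D_L=D^L$, and here the paper has a device that dissolves exactly the bookkeeping you flag as ``the main obstacle''. Dualizing the Euler sequence \eqref{tautological}, tensoring with $V\dual$, and comparing with $0\to L^\perp\tensor\ko_{\GGG}\to W\dual\tensor\ko_{\GGG}\to L\dual\tensor\ko_{\GGG}\to 0$, one obtains a $3\times 3$ exact commutative diagram in which one and the same sheaf $\kk$ appears as the cokernel both of $M^L$ and of $(M_L)^*$. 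Since the zeroth Fitting ideal of a sheaf is independent of the chosen presentation by equal-rank bundles, $\det(M^L)$ and $\det(M_L)$ cut out the same divisor \emph{scheme-theoretically}, for every admissible $L$, with no genericity assumption, local splittings, or Jacobi complementary-minor identities. The diagram moreover yields, via Grothendieck duality, the relation $\kk^r\simeq\kk_r^\vee(n)$ between the two rank-one sheaves on $D$, which is then exploited to discuss when $X_L$ is actually \emph{isomorphic} to $Y_L$ (cf.\ Example \ref{noniso}); your route produces none of this extra structure.

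There is also a genuine flaw in the justification of your fallback. The inference ``corank exactly $1$ along each component, hence $\det M_L$ is reduced'' is false in general: the map with matrix $\mathrm{diag}(1,x^2)$ has corank one along $\{x=0\}$ but determinant $x^2$. What is true, and what you actually need, is that for general $L$ the corresponding section of the globally generated bundle $\HHom(L\tensor\ko_{\GGG},V\tensor\kq)$ is transverse to the corank stratification, so $D_L$ is smooth away from the corank-$\geq 2$ locus (which has codimension $4$), hence generically smooth and therefore reduced; likewise for $D^L$. With this repair your argument does prove the proposition for general $L$; note also that for the birationality assertion alone the set-theoretic equality of supports already suffices, since both projectivizations are isomorphic, over the common dense corank-one locus, to the same open subvariety of $D$. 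Still, the paper's diagram argument is both shorter and valid without any genericity beyond admissibility.
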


\begin{proof}
To see this, we write the following  exact commutative diagram:
\[
\xymatrix@-3ex{
&& 0 \ar[d] & 0 \ar[d] & \\
&& V\dual \tensor \kq\dual \ar[d] \ar@{=}[r] & V\dual \tensor \kq\dual \ar^-{(M_L)^*}[d]\\
0 \ar[r] &L^\perp \tensor \ko_{\GGG} \ar@{=}[d] \ar[r] & U\dual \tensor V\dual \tensor\ko_{\GGG}
\ar[d] \ar[r] & L\dual \tensor \ko_{\GGG} \ar[r] \ar[d] & 0\\ 
 & L^\perp \tensor \ko_{\GGG} \ar^-{M^L}[r] & V\dual \tensor \ku\dual \ar[d] \ar[r] & \kk \ar[d] \ar[r] & 0\\
 & & 0 & 0
}
\]
Here, $\kk$ is the cokernel both of $M^L$ and of $(M_L)^*$. This says that:
\[
D^L=\VV(\det(M^L))=\VV(\det(M_L\dual))=\VV(\det(M_L))=D_L.
\]

Now let us look at $X_L$ and $Y_L$. The sheaf $\kk$ is supported
on $D=D^L$, and is actually of the form $\iota_*(\kk_r)$, where $\kk_r$
is a reflexive sheaf of rank $1$ on $D$ and $\iota : D \to \GGG$
is the natural embedding. The
cokernel of $M_L$ is also of the form $\iota_*(\kk^r)$, with $\kk^r$
reflexive of rank $1$ on $D$.
By Grothendieck duality, since $D$ has degree $n$, the previous diagram says that $\kk^r \simeq \kk_r^\vee(n)$.
 On the (open and dense) locus of $D$ where $\kk^r$ and $\kk_r$
are locally free, the variety $D$ coincides with $X_L$ and
$Y_L$. Therefore, by Proposition \ref{mi sa che bisogna dimostrarla}, these varieties are both birational to $D$.
\end{proof}

A priori, $X_L$ is not isomorphic to $Y_L$, as the
projectivization of the two sheaves $\kk_r$ and $\kk_r^\vee$ gives in
principle non-isomorphic varieties (cf. Example \ref{noniso} below).
This does not happen if $\kk_r$ is locally free of rank $1$ on $D$,
which in turn is the case if $D$ is smooth. Also, when the
singularities of $D$ are isolated points, then in order for
$\PP(\kk_r)$ to be isomorphic to
$\PP(\kk_r^\vee)$, it suffices to check that the rank of $\kk_r^\vee$ and $\kk_r$ is the same
at those points, and this is of course true. Then we have:

\begin{remark}
Suppose that $D$ is smooth or has isolated singularities, then $X_L$ is isomorphic to $Y_L$.
\end{remark}

If we assume that $X_L$ is Calabi-Yau, then $m=n$ and $c=nr$ so we are in a sub-case of our description above, and birationality still holds. Thus, in dimension 3, the derived equivalences would follow also from the work of Bridgeland \cite{bridg-flop}.

\begin{example} \label{noniso}
  Let us describe an example of two determinantal varieties $X_L$ and
  $Y_L$ which are derived equivalent, birational, but not
  isomorphic. Actually one can describe infinitely many examples this
  way, all of dimension at least $5$. In all of them the
  canonical system is birational onto a hypersurface of general type
  in $\GGG$.

  Take $(r,m,n)=(3,5,7)$, $c=21$ and consider a general subspace $L
  \subset W$. Then $X_L$ and $Y_L$ are both smooth
  projective $5$-folds.
  The Picard group $\Pic(X_L)$ is isomorphic to $\ZZ^2$,
  generated by (the restriction of) $H_X$ and $P$, while $\Pic(Y_L)$ is
  also isomorphic to $\ZZ^2$, generated by $H_Y$
  and $Q$. Note that $\omega_{X_L} \simeq \ko_{X_L}(2P)$ while
  $\omega_{Y_L} \simeq \ko_{Y_L}(2Q)$.

  We claim that $X_L$ and $Y_L$ are not isomorphic.
  Indeed, if there was an isomorphism $f:X_L
  \to Y_L$, we should have $f^*(Q) = P$ because of the expression of
  the canonical bundle. Since $(f^*(Q),f^*(H_{Y_L}))$
  should form a $\ZZ$-basis of $\Pic(X_L)$, we have $f^*(H_{Y_L})=H_{X_L}+aP$,
  for some $a \in \ZZ$. But a straightforward computation shows that
  $(H_{X_L}+aP)^5$ is never equal to $H_{Y_L}^5$, for any choice of
  $a$. 

  So the $5$-folds $X_L$ and $Y_L$ are not isomorphic. They are however derived
  equivalent via HPD and both birational by projection to a determinantal hypersurface $D$
  of degree $7$ in 
  $\GG(5,3)$. The canonical bundle of this hypersurface is
  $\ko_{D}(2)$.
  The determinantal model of $X_L$ (respectively, of $Y_L$) is the
  fivefold of degree $490$ (respectively, $1176$)
  cut in $\PP^{13}$ (respectively, in $\PP^{20}$)
  by the $4\times 4$ minors (respectively, the $3\times 3$ minors) of
  a sufficiently general $5\times 7$ matrix of linear
  forms. 
\end{example}

Concerning rationality of determinantal varieties, we have the
following result.

\begin{proposition} \label{razionalita}
The variety $X_L$ is rational if $nr > c$;  $Y_L$ is rational if $c>nr$.\end{proposition}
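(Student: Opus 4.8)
The plan is to prove rationality by exhibiting $X_L$ (resp.\ $Y_L$) as birational to a projective bundle over the Grassmannian $\GGG$, or over a suitable linear space, whenever the numerical inequality on the ranks holds. Recall from Proposition \ref{mi sa che bisogna dimostrarla} that $X_L$ is the projectivization of $\coker(M_L)$, where $M_L : L \tensor \ko_{\GGG} \to V \tensor \kq$ is a morphism of vector bundles on $\GGG$ of ranks $c$ and $nr$ respectively. First I would observe that when $nr > c$ the generic corank of $M_L$ is $nr - c > 0$, so that $\coker(M_L)$ is generically a vector bundle of rank $nr - c > 0$ on the whole of $\GGG$. Restricting to the dense open locus $\GGG^\circ \subset \GGG$ where $M_L$ has maximal rank $c$, the cokernel $\coker(M_L)|_{\GGG^\circ}$ is locally free of rank $nr - c$, and hence its projectivization is a Zariski-locally-trivial $\PP^{nr-c-1}$-bundle over $\GGG^\circ$.

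The next step is to conclude rationality of the total space. Since $\GGG = \GG(U,r)$ is a Grassmannian, it is rational; thus $\GGG^\circ$ is rational, being a dense open subset. A Zariski-locally-trivial projective bundle over a rational base is itself rational (the fibration splits birationally as a product with $\PP^{nr-c-1}$). Therefore $\PP(\coker(M_L)|_{\GGG^\circ})$ is rational, and since this is a dense open subset of $X_L = \PP(\coker(M_L))$ by Proposition \ref{mi sa che bisogna dimostrarla}\eqref{e due}, we conclude that $X_L$ is rational. The dual statement for $Y_L$ when $c > nr$ follows by the identical argument applied to $N^L$ or $M^L$: here one uses the isomorphism $\ky^r \simeq \kx^{m-r}$, under which $Y_L^r \simeq X_{L^\perp}^{m-r}$ with $\dim L^\perp = nm - c$, so the relevant inequality $n(m-r) > nm - c$ rearranges to $c > nr$, placing us back in the previously treated regime.

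The main obstacle I anticipate is verifying that the projective bundle is genuinely \emph{Zariski}-locally trivial over the open locus and not merely a Brauer-twisted (étale-locally-trivial) form, since rationality of $\PP(\ke)$ over a rational base is automatic for honest projectivized vector bundles but can fail for Severi--Brauer schemes carrying a nontrivial Brauer class. This is resolved by noting that $\coker(M_L)$ restricted to the maximal-rank locus $\GGG^\circ$ is an actual locally free $\ko_{\GGG^\circ}$-module, so its projectivization in Grothendieck's sense is Zariski-locally trivial by construction and carries the tautological section—no Brauer obstruction arises. A secondary point to check is that the maximal-rank locus $\GGG^\circ$ is indeed nonempty and dense, which holds because $L$ is assumed admissible (general enough) in the sense of \cite{kuznetsov:hpd}, guaranteeing $X_L$ has the expected dimension $r(n+m-r) - c - 1 = \dim \GGG + (nr - c - 1)$, consistent with a $\PP^{nr-c-1}$-bundle over $\GGG$ and hence with genericity of the maximal-rank condition.
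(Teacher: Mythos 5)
Your proof is correct and follows essentially the same route as the paper: both exhibit the relevant section as the projectivization of the cokernel of the bundle map from Proposition \ref{mi sa che bisogna dimostrarla}, observe that under the stated inequality this cokernel is locally free of positive rank over a dense open subset of the rational base $\GGG$, and conclude rationality from the (Zariski-local) projective bundle structure; the paper simply treats $Y_L$ directly via $M^L$ and says ``the same argument works for $X_L$,'' whereas you treat $X_L$ via $M_L$ and reduce $Y_L$ to it through $\ky^r \simeq \kx^{m-r}$. Your extra remarks on the absence of a Brauer obstruction and on admissibility of $L$ guaranteeing the density of the maximal-rank locus are sound refinements of the same argument, not a different method.
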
  

\begin{proof}
  By Proposition \ref{mi sa che bisogna dimostrarla},  $Y_L$ is the 
  projectivization of the cokernel sheaf of the map $M^L$.
  Recall that $\dim(L^\perp)=nm-c$ and that $V^\vee \otimes \ku^\vee$
  has rank $n(m-r)$.

  So if $c>nr$, i.e. if $n(m-r)>nm-c$, there is a Zariski
  dense open subset of $\GG(U,r)$ where $M^L$ has constant rank
  $mn-c$. 
  % qui c'era un commento di Michele ma credo sia giusto cosi.
  %mi sembra che le disuguaglianze qui sopra siano ok, ma il rango no. forse sbaglio, non e' genericamente iniettiva?} 
  Hence an open piece 
  of $Y_L$ is the projectivization of a locally free
  sheaf over a rational variety, so $Y_L$ is rational.

  The same argument works for $X_L$.
\end{proof}

A side remark is that, using 
$N^L$ instead of $M^L$ we would rationality of $Y_L$ if
$c>m(n-m+r)$. However, one immediately proves that $m(n-m+r) \ge nr$.

\section{The Segre-determinantal duality}\label{sect:segre-det}

In this section, we give a more detailed description of the case $r=1$ (we suppress 
 $r$ from our notation for this section).
In this case $\kx \simeq \PP^{n-1}\times\PP^{m-1}$ is just a product
of two projective spaces and 
$X_L$ is a linear section of a Segre variety. 
On the other hand, $\ky$ is the Springer desingularization of
the space of degenerate matrices.

For this section and the following ones, we make use of the standard notation
$\ko_{X_L}(a,b)$ for the restriction to $X_L$ of $\ko_{\PP^{n-1}}(a)
\boxtimes \ko_{\PP^{m-1}}(b)$, so that $\ko_{X_L}(1,1)=\ko_{X_L}(H)$ and  
$\ko_{X_L}(0,1)=\ko_{X_L}(P)$. Proposition \ref{mi sa che bisogna
  dimostrarla} and Lemma \ref{adjuncion} become:

\begin{corollary}\label{descrXL}
The variety $X_L$ can be described in two following ways:
\begin{enumerate}[(i)]

\item as the projectivization of the cokernel of:
\[
  L \otimes \ko_{\PP U}(-1) \to V \otimes \ko_{\PP U};    
\]

\item as the projectivization of the cokernel of:
\begin{equation*}
  L \otimes \ko_{\PP V}(-1) \to U \otimes \ko_{\PP V}.
\end{equation*}
\end{enumerate}
Also, we have the formulas for the canonical bundle:
\begin{align*}
  \omega_{X_L} & = \ko_{X_L}(c-n,c-m),
\end{align*}
In particular $X_L$ is Fano if and only if $c < m$,
and rational for $c<n$.
\end{corollary}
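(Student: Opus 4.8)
The plan is to prove Corollary \ref{descrXL} as a direct specialization of the general results already established, setting $r=1$ throughout. First I would record that when $r=1$, the Grassmannian $\GGG = \GG(U,1)$ is simply the projective space $\PP U$, the tautological quotient $\kq$ becomes the line bundle $\ko_{\PP U}(1)$ (of rank $r=1$), and the subbundle $\ku$ has rank $m-1$. Consequently $\kx = \PP(V \tensor \kq) = \PP(V \tensor \ko_{\PP U}(1)) \simeq \PP U \times \PP V$, which is the Segre variety, and the two descriptions (i) and (ii) of $X_L$ follow by substituting $r=1$ into items \eqref{e due} and \eqref{e tre} of Proposition \ref{mi sa che bisogna dimostrarla}. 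Indeed, the map $M_L : L \tensor \ko_{\GGG} \to V \tensor \kq$ becomes $L \tensor \ko_{\PP U} \to V \tensor \ko_{\PP U}(1)$, and twisting by $\ko_{\PP U}(-1)$ (which does not change the projectivization of the cokernel up to the identification of the relative $\ko(1)$) yields the displayed map $L \tensor \ko_{\PP U}(-1) \to V \tensor \ko_{\PP U}$ of description (i). The analogous map $N_L$ over $\GG(V,1) = \PP V$ gives description (ii).

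Next I would derive the canonical bundle formula. The general formula from Lemma \ref{adjuncion} reads $\omega_{X_L} \simeq \ko_{X_L}((c-nr)H + (n-m)P)$. Setting $r=1$ gives $\omega_{X_L} \simeq \ko_{X_L}((c-n)H + (n-m)P)$. The remaining task is to translate this into the bidegree notation $\ko_{X_L}(a,b)$. Using the stated conventions $\ko_{X_L}(1,1) = \ko_{X_L}(H)$ and $\ko_{X_L}(0,1) = \ko_{X_L}(P)$, one computes $(c-n)H + (n-m)P = (c-n)(1,1) + (n-m)(0,1) = (c-n,\,(c-n)+(n-m)) = (c-n,\,c-m)$, which is exactly the claimed formula $\omega_{X_L} = \ko_{X_L}(c-n,c-m)$.

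Finally I would read off the two consequences. The variety $X_L$ is Fano precisely when $\omega_{X_L}^\vee$ is ample, i.e. when both components of the bidegree are negative: $c-n<0$ and $c-m<0$. Since we assume $m \le n$, the binding constraint is $c < m$, giving the Fano criterion $c<m$. For rationality, I would invoke Proposition \ref{razionalita}, which asserts $X_L$ is rational when $nr>c$; specializing to $r=1$ gives rationality for $c<n$.

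The main obstacle here is essentially bookkeeping rather than mathematical depth: the substance is contained in the already-proven Proposition \ref{mi sa che bisogna dimostrarla}, Lemma \ref{adjuncion}, and Proposition \ref{razionalita}. The one point requiring genuine care is the identification $\PP(V \tensor \ko_{\PP U}(1)) \simeq \PP U \times \PP V$ together with the correct matching of the relative hyperplane class $H$ and the pullback class $P$ to the bidegrees $(1,1)$ and $(0,1)$; I would verify this compatibility explicitly against the stated conventions before converting the canonical bundle, since an off-by-one error in the $P$-contribution would propagate into both the canonical formula and the Fano threshold.
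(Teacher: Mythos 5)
Your proposal is correct and takes essentially the same route as the paper: the paper likewise presents the corollary as the direct $r=1$ specialization of Proposition \ref{mi sa che bisogna dimostrarla} and Lemma \ref{adjuncion}, its one-line proof only remarking that, since $X_L^0=\emptyset$, the Fano criterion descends from the canonical bundle formula, and that the rationality claim is Proposition \ref{razionalita}. Your write-up simply makes explicit the bookkeeping the paper leaves implicit, namely $\GGG=\PP U$, $\kq=\ko_{\PP U}(1)$, the harmless twist by $\ko(-1)$ in the cokernel presentations, and the translation $H=(1,1)$, $P=(0,1)$ yielding $\omega_{X_L}=\ko_{X_L}(c-n,c-m)$.
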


\begin{proof}
Since $X_L^0=\emptyset$ %we can apply Lemma \ref{adjuncion} to show that
%$c < m$ implies that $X_L$ is Fano.
the condition for $X_L$ to be Fano descends directly for the formula for the canonical bundle. The statement on rationality is Proposition \ref{razionalita}.
\end{proof}

The variety $Y_L$ is itself a linear section of a Segre variety, by
Proposition \ref{mi sa che bisogna dimostrarla}, as the folloing Lemma shows.

\begin{lemma}
The variety $Y_L$ is isomorphic to the complete intersection of $n$
hyperplanes in $\PP U \times \PP^L$ determined by $L \subset W$.
So the canonical bundle $\omega_{Y_L}$ equals $\ko_{Y_L}(n-m,n-c)$.
Moreover, for generic $L\subset W$, the determinantal variety $Z^L$ is smooth if and only if $c<2n-2m+5$.
\end{lemma}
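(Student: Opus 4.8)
The plan is to realize $Y_L$ concretely inside the product $\PP U \times \PP^L$, read off the complete intersection structure and the canonical bundle by adjunction, and then treat the smoothness of $Z^L$ separately by a dimension count on the singular stratum of $\kz^{m-1}$. The starting point is the description of $Y_L$ furnished by Proposition \ref{mi sa che bisogna dimostrarla}: for $r=1$ it reads $Y_L = \PP(\ke|_{\PP^L})$, where $\ke$ is the cokernel of the tautological map \eqref{tauto}, namely $\psi : V^\vee \tensor \ko_{\PP W^\vee}(-1) \to U \tensor \ko_{\PP W^\vee}$. Since $\ke$ is by construction a quotient of $U \tensor \ko_{\PP W^\vee}$, restriction to $\PP^L$ yields a surjection $U \tensor \ko_{\PP^L} \epi \ke|_{\PP^L}$, hence a closed embedding
\[
Y_L = \PP(\ke|_{\PP^L}) \subset \PP(U \tensor \ko_{\PP^L}) = \PP U \times \PP^L .
\]
Under this embedding $Y_L$ is the locus where the tautological quotient $U \tensor \ko \to \kq = \ko_{\PP U}(1)$ of the first factor annihilates the image of $\psi|_{\PP^L}$. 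Composing $\psi|_{\PP^L}$ with that quotient produces a section
\[
\ss \in H^0\big(\PP U \times \PP^L,\ V \tensor \ko_{\PP U}(1) \boxtimes \ko_{\PP^L}(1)\big)
\]
whose zero scheme is exactly $Y_L$; its $n = \dim V$ components are the bilinear forms of bidegree $(1,1)$ cut out by $L \subset U \tensor V$, i.e. $n$ hyperplanes of the Segre embedding $\PP U \times \PP^L \subset \PP(U \tensor L)$.

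Next I would invoke the admissibility of $L$, which forces $Y_L$ to have the expected dimension $(m-1)+(c-1)-n$; hence $\ss$ is a regular section and $Y_L$ is a genuine complete intersection of $n$ divisors of type $(1,1)$. The canonical bundle then drops out of adjunction: the normal bundle $V \tensor \ko_{\PP U}(1)\boxtimes\ko_{\PP^L}(1)|_{Y_L}$ has determinant of bidegree $(n,n)$ and $\omega_{\PP U \times \PP^L}$ has bidegree $(-m,-c)$, so
\[
\omega_{Y_L} \iso \ko_{Y_L}(n-m,\, n-c),
\]
where $\ko_{Y_L}(a,b)$ denotes the restriction of $\ko_{\PP U}(a) \boxtimes \ko_{\PP^L}(b)$.

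For the smoothness criterion I would work directly with $Z^L = \kz^{m-1}_{m,n} \cap \PP^L$, the section of the degenerate-matrix variety by the $(c-1)$-plane $\PP^L \subset \PP W^\vee$. The essential input is that $\Sing \kz^{m-1} = \kz^{m-2}$, which has codimension $(m-(m-2))(n-(m-2)) = 2n-2m+4$ in $\PP W^\vee$. For generic $L$ the plane $\PP^L$ is a generic $(c-1)$-plane, so Bertini makes its intersection with the smooth locus $\kz^{m-1} \setminus \kz^{m-2}$ smooth; thus $Z^L$ is smooth exactly when $\PP^L$ misses $\kz^{m-2}$. A generic $(c-1)$-plane avoids the irreducible variety $\kz^{m-2}$ if and only if $(c-1) + \dim \kz^{m-2} < \dim \PP W^\vee$, i.e. $c-1 < 2n-2m+4$, that is $c < 2n-2m+5$; and when this fails $\PP^L$ meets $\kz^{m-2}$ and $Z^L$ acquires a singular point there.

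The step needing the most care is the ``only if'' half of the last claim: one must verify both that the singular locus of $\kz^{m-1}$ is precisely $\kz^{m-2}$ and that, for generic $\PP^L$ through a point $p \in \kz^{m-2}$, the section $Z^L$ is honestly singular at $p$, i.e. that $T_p Z^L = T_p \kz^{m-1} \cap T_p \PP^L$ has dimension strictly larger than $\dim Z^L$; this follows since $\dim T_p \kz^{m-1} > \dim \kz^{m-1}$ while $T_p\PP^L$ is a generic subspace of the expected codimension. By contrast, the complete-intersection description and the canonical bundle computation are routine once the embedding $Y_L \subset \PP U \times \PP^L$ and the regularity of $\ss$ are in place.
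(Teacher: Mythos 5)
Your proof is correct and takes essentially the same route as the paper: the paper likewise uses Proposition \ref{mi sa che bisogna dimostrarla} to identify $Y_L$ with $\PP(\ke|_{\PP^L})$, realizes it inside $\PP U \times \PP^L$ as the vanishing locus of the bidegree-$(1,1)$ section determined by $L \subset W$, obtains $\omega_{Y_L} \simeq \ko_{Y_L}(n-m,n-c)$ by adjunction, and derives the smoothness criterion from the fact that the singular locus of $Z^L$ (namely $\kz^{m-2}\cap\PP^L$) has codimension $2n-2m+4$ in $\PP^L$, so a generic $(c-1)$-plane avoids it exactly when $c<2n-2m+5$. Your explicit tangent-space check of the ``only if'' direction simply spells out what the paper leaves as a one-line dimension count.
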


\begin{proof}
The first statement follows from the very last item of Proposition
\ref{mi sa che bisogna dimostrarla}. Indeed, since $r=1$, $Y_L$ the
projectivization of the sheaf $\ke$, restricted to $\PP^L$.
Therefore, just as in the proof of Proposition \ref{mi sa che bisogna dimostrarla},
 $Y_L$ is the vanishing locus of the global section of $\ko_{
 \PP U\times \PP^L}(1,1)$ determined by the subspace $L^\perp \subset W^\vee$,
i.e. by $L \subset W$.

Note that $\ko_{Y_L}(0,1)\simeq \ko_{Y_L}(H)$ and $\ko_{Y_L}(1,0)=\ko_{Y_L}(Q)$.
The canonical bundle formula
follows by adjunction and agrees with Corollary \ref{cor:adjunction}.

The codimension in $\PP^L$ of the singular locus of $Z^L$ is
$2n-2m+4$ for a general choice of $L\subset W$. So $Z^L$ is smooth if
and only if $c < 2n-2m+5$, which gives the last statement.
\end{proof}

\begin{remark}
  Here, 
since $\ku\dual=T_{\PP U}(-1)$. By Proposition \ref{mi sa che bisogna dimostrarla}, the variety $Y_L$ can also be described as the projectivization of the cokernel sheaf of
  \begin{equation}
    \label{pino}
   L^{\perp} \otimes \ko_{\PP U} \to V\dual \otimes T_{\PP U}(-1), 
  \end{equation}
\end{remark}

The map appearing in (\ref{pino}) in the remark above, corresponds once again to the choice of $L \subset W$. 
Dually, for $Y_L$, Proposition \ref{razionalita} gives:

\begin{lemma}
The variety $Y_L$ is rational if $c>n$.
\end{lemma}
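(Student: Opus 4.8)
The plan is to derive the rationality of $Y_L$ from Proposition \ref{razionalita}, specialized to the case $r=1$ which governs this entire section. Recall that Proposition \ref{razionalita} asserts that $Y_L$ is rational whenever $c > nr$. Setting $r=1$, this condition becomes exactly $c > n$, which is precisely the statement we wish to prove. So at the level of bookkeeping the lemma is an immediate instance of the general proposition, and the only real content is to make sure the specialization is legitimate and to recall the geometric reason behind it.

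First I would invoke Proposition \ref{mi sa che bisogna dimostrarla}, which realizes $Y_L$ as the projectivization of $\coker(M^L)$, where
\[
M^L : L^\perp \tensor \ko_{\GGG} \to V\dual \tensor \ku\dual
\]
is the morphism of bundles on $\GGG = \GG(U,1) = \PP U$ determined by $L^\perp \subset W\dual$. The key numerical input is a dimension count: the source has rank $\dim L^\perp = mn - c$, while the target $V\dual \tensor \ku\dual$ has rank $n(m-r) = n(m-1)$ in the $r=1$ case. The condition $c > n$ is exactly the inequality $n(m-1) > mn - c$ ensuring that the target rank strictly exceeds the source rank, so that over a Zariski-dense open subset of $\PP U$ the map $M^L$ is injective of constant rank and its cokernel is locally free.

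Next I would conclude as in the proof of Proposition \ref{razionalita}: on this dense open locus of $\PP U$, the variety $Y_L$ restricts to the projectivization of a locally free sheaf over an open subset of projective space, hence to an open subset of a projective bundle over a rational base, which is rational. Since this open subset is dense in $Y_L$, the variety $Y_L$ itself is rational. I do not expect any genuine obstacle here, since the lemma is purely the $r=1$ shadow of the already-established Proposition \ref{razionalita}; the only point requiring care is confirming that the generic rank of $M^L$ is indeed full (equal to the source rank $mn-c$) under the hypothesis $c>n$, which follows from the same genericity of $L$ that underlies the admissibility assumption in force throughout this section.
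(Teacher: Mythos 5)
Your proof is correct and matches the paper exactly: the paper's own proof of this lemma is the one-line remark ``This is just Proposition \ref{razionalita},'' and your specialization $r=1$ (so $\GG(U,1)=\PP U$, target rank $n(m-1)$, source rank $mn-c$, and $c>n$ giving generic full rank of $M^L$) is precisely the argument the paper gives for Proposition \ref{razionalita} itself. No gaps; you have simply unpacked the citation.
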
  

\begin{proof}
  This is just Proposition \ref{razionalita}.
\end{proof}

Thanks to the constructions of section \ref{sect:CY}, we obtain the following corollary.

\begin{corollary}
If $c=n$, then $X_L$ and $Y_L$ are birational $(m-2)$-folds. If $m=n$ they are Calabi-Yau and have nef canonical divisor otherwise.
\end{corollary}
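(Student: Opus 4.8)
The plan is to deduce the corollary by specializing results already proved for arbitrary $r$ to the case $r=1$ and combining them with the explicit canonical bundle formulas of this section. First I would dispose of the birationality: the Proposition of Section \ref{sect:CY} establishes that $X_L$ and $Y_L$ are birational precisely when $c=nr$, and with $r=1$ this is exactly the hypothesis $c=n$, so no additional argument is needed there. For the dimension, the formulas of \S\ref{proshe} give at $r=1$ that $\dim X_L = (n+m-2)-c$ and $\dim Y_L = (m-n-2)+c$; substituting $c=n$ yields $\dim X_L = \dim Y_L = m-2$, so both varieties are $(m-2)$-folds as claimed.

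It remains to treat the canonical class. By Corollary \ref{descrXL} we have $\omega_{X_L} \simeq \ko_{X_L}(c-n,c-m)$, and the Lemma above gives $\omega_{Y_L} \simeq \ko_{Y_L}(n-m,n-c)$. Setting $c=n$ these specialize to $\omega_{X_L} \simeq \ko_{X_L}(0,n-m)$ and $\omega_{Y_L} \simeq \ko_{Y_L}(n-m,0)$. When $m=n$ both reduce to the structure sheaf, so $X_L$ and $Y_L$ are Calabi-Yau. When $m \neq n$, the standing assumption $n \ge m$ forces $n-m>0$, and each canonical bundle is then a positive multiple of one of the nef tautological classes $\ko_{X_L}(0,1)=\ko_{X_L}(P)$, respectively $\ko_{Y_L}(1,0)=\ko_{Y_L}(Q)$; hence both are nef.

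I do not anticipate a genuine obstacle, since every ingredient is already available from the general-$r$ theory and the computations of this section. The one point deserving a word of justification is the nefness of $\ko_{X_L}(0,1)$ and $\ko_{Y_L}(1,0)$: these are restrictions of pullbacks of ample line bundles from the projective-space factors of the ambient Segre products (the nefness of $P$ is recorded in the proof of Lemma \ref{adjuncion}, and that of $Q$ follows identically), and pullbacks of nef bundles are nef.
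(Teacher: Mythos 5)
Your proof is correct and follows essentially the same route as the paper, which states this corollary as an immediate consequence of the constructions of Section \ref{sect:CY}: the birationality proposition at $c=nr$ specialized to $r=1$, the dimension formulas of \S\ref{proshe} (both give $m-2$ when $c=n$), and the canonical bundle formulas $\omega_{X_L}\simeq\ko_{X_L}(c-n,c-m)$ and $\omega_{Y_L}\simeq\ko_{Y_L}(n-m,n-c)$, with nefness of $P$ and $Q$ as recorded in the proof of Lemma \ref{adjuncion}. One harmless inaccuracy: the proposition of Section \ref{sect:CY} asserts birationality only \emph{if} $c=nr$, not ``precisely when,'' but you use only the implication it actually provides.
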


We resume the results of this section in Table \ref{table:r=1}.

\begin{table}[h!]
\small{
{
\begin{center}
\begin{tabular}{|l||l|l|l|l|}
\hline
& $c<m$ & $m \leq c < n$ & $c=n$ & $n < c$ \\
\hline
HPD Functor & \multicolumn{2}{c|}{$\Db(Y_L) \to \Db(X_L)$} & equivalence & $\Db(X_L) \to \Db(Y_L)$  \\
\hline
\hline

\multirow{2}{*}{$Y_L$} & \multirow{2}{*}{Fano visitor} & & \multirow{2}{*}{CY if $n=m$} & Rational \\
 & & & & Fano if $n=m$ \\
\hline
$X_L$ & Rational Fano & Rational &CY if $n=m$ & Fano visitor if $n=m$ \\
\hline

\end{tabular}
\end{center}
}
}
\caption{The Segre-determinantal duality.}
\label{table:r=1}
\end{table}

\section{Fano and rational varieties}\label{sect:fano}

\subsection{Representability into Fano varieties}
In this section, we consider question \ref{qu:bondal}. We start
by stating a straightforward consequence of Corollary \ref{cor:functors} and Lemma \ref{adjuncion}
(see also Table \ref{table:all-r}), which provides a large class of examples
of weakly Fano-visitor (see Def. \ref{whost}) varieties, up to categorical resolutions of singularities.

\begin{proposition}\label{Fano-visitor-allr}
Suppose that $n=m$.
If $c<rn$, then $Y_L^r$ and $(Z^L_r,\kr'_{\PP^L})$ are weakly Fano visitor. If $c>nr$, then $X_L^r$ 
and $(Z^r_L,\kr'_{\PP_L})$ are weakly Fano visitor.
\end{proposition}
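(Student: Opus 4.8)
The plan is to reduce the claim to producing, in the case $n=m$, an honest (possibly singular) Fano variety whose derived category (or a categorical crepant resolution thereof) contains the relevant section as a semiorthogonal component, and then invoke Corollary \ref{cor:functors} together with the adjunction computations of Lemma \ref{adjuncion} and Corollary \ref{cor:adjunction}. Since $n=m$, recall that $\kz^r$ has Gorenstein singularities and that by Proposition \ref{prop:noncomm-resol-general} the categorical resolution $\Db(\kz^r,\kr')$ is \emph{crepant}. This crepancy is the feature I would exploit: the noncommutative structures $(Z^L_r,\kr'_{\PP^L})$ and $(Z^r_L,\kr'_{\PP_L})$ are categorical crepant resolutions of the singular determinantal sections, which is exactly what Definition \ref{whost} of weakly Fano-visitor requires of the ambient.

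First I would treat the case $c<nr$. Here Corollary \ref{cor:functors}(iii) gives a fully faithful functor
\[
\Db(Z^L_r,\kr'_{\PP^L}) \simeq \Db(Y_L) \longrightarrow \Db(X_L) \simeq \Db(Z_L,\kr'_{\PP_L}),
\]
whose orthogonal complement is generated by $(nr-c)\binom{m}{r}$ exceptional objects, hence realizing a semiorthogonal decomposition $\Db(X_L)=\langle \Db(Y_L),\cat{T}^\perp\rangle$. It remains to check that the ambient $X_L$ (equivalently its crepant categorical resolution $(Z_L,\kr'_{\PP_L})$) is Fano, or at worst a Fano admitting a categorical crepant resolution. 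This is precisely where Lemma \ref{adjuncion}\eqref{when negative} enters: when $c<nr$ and $m=n$ the anticanonical map $\phi_{-K}$ is birational onto its image, and under the additional genericity assumption $X_L^{r-1}=\emptyset$ it is an embedding making $X_L$ genuinely Fano. For the general admissible $L$ where $X_L^{r-1}\neq\emptyset$, the determinantal model $Z_L$ is the singular Fano whose crepant categorical resolution is $(Z_L,\kr'_{\PP_L})$, so the weak form of the definition applies. The case $c>nr$ is entirely symmetric, using Corollary \ref{cor:functors}(i) and Corollary \ref{cor:adjunction}: now $Y_L$ (resp.\ $(Z^r_L,\kr'_{\PP_L})$) plays the role of the ambient Fano and $X_L$ embeds fully faithfully into it.

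The main obstacle I anticipate is not the semiorthogonal decomposition itself — that is handed to us by HPD — but rather pinning down the correct Fano ambient and the sense in which it is ``Fano'' when the section is singular. Concretely, the delicate point is justifying that the canonical-bundle positivity of Lemma \ref{adjuncion} and Corollary \ref{cor:adjunction}, which is computed on the smooth resolutions $X_L$ and $Y_L$, descends to the statement that the \emph{singular} determinantal models $Z_L$, $Z^L$ are Fano in a way compatible with the \emph{crepancy} of $\kr'_{\PP_L}$, $\kr'_{\PP^L}$ — so that the pair (singular Fano, categorical crepant resolution) genuinely matches Definition \ref{whost}. Once the $n=m$ crepancy from Proposition \ref{prop:noncomm-resol-general} is transported to the linear sections and combined with the anticanonical positivity, the two cases $c<nr$ and $c>nr$ follow by the symmetric application of the two branches of Corollary \ref{cor:functors}, and the proof is essentially a bookkeeping of which side is the host.
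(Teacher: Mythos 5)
Your proof is correct and follows essentially the same route as the paper, which presents this Proposition as a straightforward consequence of Corollary \ref{cor:functors} (supplying the fully faithful functor and the semiorthogonal decomposition with exceptional complement) combined with Lemma \ref{adjuncion} and Corollary \ref{cor:adjunction} (supplying the anticanonical positivity of the host), the crepancy of $\kr'$ for $n=m$ from Proposition \ref{prop:noncomm-resol-general} being exactly what makes the pair (singular determinantal Fano, categorical crepant resolution) fit Definition \ref{whost}. The ``delicate point'' you flag --- transporting the positivity computed on $X_L$, $Y_L$ to the singular models $Z_L$, $Z^L$ --- is handled just as you suggest, via the crepancy of the Springer resolution in the Gorenstein case $n=m$, and is left implicit in the paper.
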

If $r=1$ we have an interpretation of Proposition \ref{Fano-visitor-allr} for determinantal varieties.

\begin{corollary}\label{cor:determinantal-Fanovis}
Let $Z \subset \PP^k$ be a determinantal variety associated with a generic $m \times n$ matrix.
If $k<m-1$ then the categorical resolution of singularities of $Z$ is Fano visitor.
\end{corollary}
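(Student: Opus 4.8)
The plan is to realize a determinantal variety $Z \subset \PP^k$ with the hypothesis $k < m-1$ as a linear section of the universal determinantal variety $\kz^1_{m,n}$, and then to invoke the $r=1$ case of Proposition \ref{Fano-visitor-allr}. Concretely, $Z$ is cut out in $\PP^k$ by the $2\times 2$ minors of a generic $m\times n$ matrix of linear forms; equivalently, the matrix of linear forms gives a linear map $k^{k+1} \to W = U\tensor V$, and the image determines a linear subspace $L \subset W$ with $\dim L = k+1 =: c$. Under this identification $Z$ is exactly the linear section $Z_L = \kz^1_{m,n} \cap \PP_L$ studied in \S\ref{dimensions}, so that its categorical resolution of singularities is $\Db(Z_L, \kr'_{\PP_L}) \simeq \Db(X_L)$ by Theorem \ref{thm:main2} and the discussion following it.

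With this dictionary in place, the task reduces to checking the numerical hypotheses of Proposition \ref{Fano-visitor-allr} in the case $r=1$. First I would observe that the hypothesis $k < m-1$ translates directly into $c = k+1 < m$. Since we are in the Segre case $r=1$, Corollary \ref{descrXL} tells us that $X_L$ is Fano precisely when $c < m$; so the genuine (not merely weak) Fano-visitor conclusion should be available here. The key point is that when $c < m \le n$, the linear section $X_L$ is itself a \emph{smooth} Fano variety, so no categorical resolution is needed on the target side, and $X_L$ embeds fully faithfully into the derived category of a smooth Fano — namely, into an appropriate $Y_{L'}$ or directly via the semiorthogonal decomposition of $\Db(X_L)$ against a larger ambient Fano. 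I would spell out which branch of Corollary \ref{cor:functors} or Table \ref{table:r=1} applies: for $c < m$ the HPD functor runs $\Db(Y_L) \to \Db(X_L)$, and the relevant statement in Table \ref{table:r=1} records that in the column $c < m$ the variety $X_L$ is ``Rational Fano,'' while $Y_L$ is the Fano-visitor object.

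The main obstacle I anticipate is bookkeeping the distinction between the genuine Fano-visitor statement and the \emph{weak} Fano-visitor statement of Proposition \ref{Fano-visitor-allr}, together with correctly matching which of $X_L$, $Y_L$ plays the role of the embedded category versus the ambient Fano. Proposition \ref{Fano-visitor-allr} as stated requires $n = m$ and produces only weak Fano-visitors through categorical crepant resolutions; but the corollary claims the stronger conclusion that the categorical resolution of $Z$ is an honest Fano-visitor. The resolution is that for $r=1$ and $c < m$ the relevant section is smooth and Fano on the nose (by Corollary \ref{descrXL}, since $X_L^0 = \emptyset$), which upgrades ``weakly Fano-visitor'' to ``Fano-visitor'': one uses the semiorthogonal decomposition of Corollary \ref{cor:functors}(iii) placing $\Db(Z_L,\kr'_{\PP_L}) \simeq \Db(X_L)$ as a component of $\Db(Y_{L'})$ for a suitable smooth Fano $Y_{L'}$. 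I would therefore be careful to verify the admissibility of $L$ (so that $X_L$ has expected dimension and the HPD apparatus applies), to confirm smoothness of the ambient Fano target, and to confirm that the embedding functor from Kuznetsov's Theorem \ref{thm:HPD} indeed lands inside the derived category of a smooth Fano rather than a merely weakly Fano one.

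Finally, I would close by noting how the named corollary about smooth plane curves (stated in the introduction) follows as the special case: a smooth plane curve of degree $d$ is the determinantal variety associated to a generic $d \times d$ matrix of linear forms on $\PP^2$ (a classical fact about linear determinantal representations of plane curves), so $k = 2$, $m = n = d$, and the hypothesis $k < m-1$ becomes $2 < d-1$, i.e. $d > 3$, with the low-degree cases handled directly. This consistency check is the sanity test I would use to make sure the numerical hypotheses have been transcribed correctly.
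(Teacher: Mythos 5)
Your overall route---realize $Z$ via the $r=1$ Segre--determinantal duality, note that the hypothesis $k<m-1$ becomes $c=k+1<m$, read off the relevant branch of Corollary \ref{cor:functors}/Table \ref{table:r=1}, and use that the Segre section is an honest smooth Fano to upgrade the weak Fano-visitor statement of Proposition \ref{Fano-visitor-allr} to the genuine one---is exactly the paper's, and your instinct about the weak-versus-genuine distinction is right. But your identification of $Z$ puts it on the wrong side of the duality, and this breaks the argument as assembled. With $\dim L = k+1$, the linear section living in $\PP^k$ is the \emph{dual} section $Z^L=\kz^{m-1}_{m,n}\cap\PP^L$, cut by the maximal minors (the corank $\geq 1$ locus), since $\PP^L\cong\PP^{c-1}=\PP^k$; the section $Z_L=\kz^1_{m,n}\cap\PP_L$ that you name instead lives in $\PP_L\cong\PP^{mn-k-2}$, not in $\PP^k$. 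Worse, the rank $\leq 1$ reading cannot be the intended one: $\kz^1_{m,n}$ is the Segre variety, of codimension $(m-1)(n-1)\geq m-1>k$ in $\PP^{mn-1}$, so under the hypothesis $k<m-1$ the $2\times 2$-minor locus of a generic matrix on $\PP^k$ is empty. Your own closing sanity check betrays the inconsistency: a plane curve of degree $d$ is a \emph{determinant}, i.e.\ a corank $\geq 1$ locus of a $d\times d$ matrix, not a rank $\leq 1$ locus.

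Once $Z=Z^L$ is identified correctly (as in the paper's proof), its categorical resolution is $\Db(Y_L)\simeq\Db(Z^L,\kr'_{\PP^L})$, and since $c=k+1<m\leq n=nr$ one is in branch (iii) of Corollary \ref{cor:functors}: the fully faithful functor runs $\Db(Y_L)\to\Db(X_L)$, where $X_L$ is a generic codimension-$c$ linear section of the smooth Segre variety $\PP^{m-1}\times\PP^{n-1}$, hence smooth by Bertini and Fano precisely because $c<m$ (Corollary \ref{descrXL}). This honest smooth Fano ambient on the $X$-side is what the ``$Y_L$: Fano visitor'' entry of Table \ref{table:r=1} records, and you quote it correctly in your second paragraph; but your third paragraph then asserts the reverse embedding, placing $\Db(Z_L,\kr'_{\PP_L})\simeq\Db(X_L)$ as a component of $\Db(Y_{L'})$ for a smooth Fano $Y_{L'}$. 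That is false in this range: for $c<nr$ the HPD functor goes from the $Y$-side into the $X$-side, and $Y_L$ is not Fano here, as $\omega_{Y_L}\simeq\ko_{Y_L}(n-m,n-c)$ with $n-m\geq 0$ and $n-c>0$. As written, your proposal never actually produces a fully faithful functor from the categorical resolution of $Z$ into a smooth Fano variety; swapping the two sides as above repairs the proof and recovers the paper's short argument.
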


\begin{proof}
The determinantal variety $Z$ is $Z^L_{m-1}=
  \kz_{m,n}^{m-1}\cap \PP^L$ for a subspace $L\subset U\otimes V$ of codimension $k+1$. 
% Qui c'era un commento di Michele ma mi pare giusto.
%forse mi sbaglio, ma qui non e' il quoziente che dovrebbe avere dim $k+1$ secondo la nostra convenzione grothendicckiana?}
 Then we use results from Table \ref{table:r=1} and conclude.
\end{proof}

Corollary \ref{cor:determinantal-Fanovis} gives a positive answer to Question \ref{qu:bondal}
for almost every curve.

\begin{example}[Plane curves]\label{ex:plane-cves}
Let $C \subset \PP^2$ be a plane curve of degree $d \ge 4$. 
Then, it is well known (see \cite[\S 3]{bove:det}) that $C$ can be written as the determinant of a $d \times d$ matrix of linear forms.
In other words, we put $m=n=d$, $k=2$ and the inequality of Corollary \ref{cor:determinantal-Fanovis} is respected.
Hence any plane curve of degree at least four is a Fano-visitor, up to
resolution of singularities.\ 

On the other hand, one can check that the blow-up of $\PP^3$ along a plane cubic is Fano (see, {\it e.g.}, \cite[Proposition 3.1, (i)]{blanc-lamy}).
Hence any plane curve of positive genus is a Fano-visitor.
\end{example}

\begin{example}[More curves of general type]

Determinantal varieties with $n\neq m$ provide a wealth of examples of (even non plane) curves of general type that are Fano-visitor.

Let us make the case where  $\dim (Y_L^1) = \dim (Z^L)= 1$ explicit. We have $c=n-m+3$. From Table \ref{table:r=1} it is straightforward
to see that $Y_L^1$ is an elliptic curve (the Calabi-Yau case) if $m=n=c=3$; this yields indeed a plane cubic. On the other hand, we see that if $m=2$ then the
curve is rational for any value of $n$ since $c=n+1$, and if $m>3$ it is forced to be a curve of general type in $\PP^{c-1}$, which is Fano visitor if $c<m$.

The dual $X_L$ is a smooth variety of dimension $2m-5$. If $m=3$, we have that $Z_L$ is an elliptic curve. If $m > 3$,
we have $\dim Z_L \geq 3$. This gives quite a lot of examples of space curves of general type that are Fano visitors. 
Take for example $c=4,\ n=6$ and $m=5$. This gives a curve of genus 4 in $\PP^3$, complete intersection of two degree 5 determinantal hypersurfaces, whose derived category is fully faithfully embedded in the derived category of a rational Fano 5-fold in $\PP^{25}$.

\end{example}

\subsection{Rationality and categorical representability}
In this subsection, we consider Question \ref{qu:catrep=ration}.
The second consequence of Corollary \ref{cor:functors} is a large class of examples of rational varieties which are categorically
representable in codimension at least 2. For simplicity, let us assume that $r=1$, so that we already discussed in section \ref{sect:segre-det}
the rationality of the sections. We state the following Proposition in terms of Segre and determinantal varieties.

\begin{corollary}
The categorical resolution of a rational determinantal variety is categorically representable in codimension at least 2.
A rational linear section of the Segre variety $\PP^{n-1} \times \PP^{m-1} \subset \PP^{nm-1}$ is categorically 
representable in codimension at least 2.
\end{corollary}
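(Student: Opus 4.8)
The plan is to reduce the final corollary to the tools already assembled, namely Corollary \ref{cor:functors}, Proposition \ref{razionalita} (rationality of sections), and the description of the Lefschetz components as generated by exceptional objects. I would treat the two assertions (determinantal varieties and Segre sections) uniformly, since by Proposition \ref{mi sa che bisogna dimostrarla} both are instances of the same linear-section setup $X_L$ and $Y_L$, dual to each other via HPD. The key observation is that categorical representability in codimension at least two means exhibiting a semiorthogonal decomposition of a categorical resolution whose every component embeds into the derived category of a smooth projective variety of dimension at most $\dim - 2$, and that exceptional objects count as the derived categories of points (dimension $0$), hence are harmless.

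Concretely, first I would recall from Corollary \ref{cor:functors} that when $c \neq nr$ the HPD functor realizes one of $\Db(X_L)$, $\Db(Y_L)$ as a component of the other, with orthogonal complement generated by $|nr-c|$ copies of $\Db(\GGG)$, themselves split into $\binom{m}{r}$ exceptional objects. So the semiorthogonal decomposition of the categorical resolution of the rational section has a ``geometric'' piece (the dual section, of strictly smaller or equal dimension) plus a collection of exceptional objects. Setting $r=1$ for the stated corollary, $\GGG = \PP U$ and the exceptional pieces are literally derived categories of points. The heart of the argument is then dimensional bookkeeping: when $X_L$ is rational (the case $c < n$ by Corollary \ref{descrXL}, equivalently $nr > c$ in Proposition \ref{razionalita}), the nontrivial ``primitive'' component $\cat{C}_L$ must itself be shown representable in the required codimension, and this is where one must recurse or invoke the dual description.

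The main obstacle, as I see it, is that rationality of $X_L$ alone does not immediately hand over a low-dimensional representation of the primitive component $\cat{C}_L$; one really wants to read off representability from the other side of the duality. The clean route is to use that when $c<n$ (so $X_L$ is rational and Fano or of Fano type for small $c$), the dual section $Y_L$ has strictly smaller dimension than $X_L$ by the dimension formulas in \S\ref{dimensions}, namely $\dim X_L - \dim Y_L = nr - c > 0$ when $m=n$; combined with Corollary \ref{cor:functors}(iii), $\Db(Y_L)$ embeds in $\Db(X_L)$ with exceptional orthogonal complement, so $\cat{C}_L \simeq \Db(Y_L)$ is represented in dimension $\dim Y_L = \dim X_L - (nr-c) \le \dim X_L - 2$ precisely when $nr - c \ge 2$. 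I would therefore verify the codimension-two bound $nr - c \ge 2$ holds in the rational range, handling the boundary values $nr - c = 1$ separately (there the primitive component is represented by a divisorial section, still of codimension one, which must be checked against the definition or excluded by the ``at least $2$'' phrasing). The argument for the Segre section $\PP^{n-1}\times\PP^{m-1}$ is identical with $r=1$, so I would state it once and then specialize.

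Thus the proof is a short deduction: invoke Corollary \ref{cor:functors} to obtain the semiorthogonal decomposition, invoke Proposition \ref{razionalita} to pin down the rational range as $nr>c$, note the exceptional orthogonal summands are representable in dimension $0$, and use the dimension count of \S\ref{dimensions} to certify that the surviving geometric component $\Db(Y_L)$ lives in codimension at least two. The one genuinely delicate point to get right is the crepancy and smoothness needed so that ``categorical resolution representable in dimension $j$'' is legitimately witnessed; here I would lean on Theorem \ref{thm:main2} and Proposition \ref{prop:noncomm-resol-general}, which guarantee that $\Db(Z_L,\kr'_{\PP_L}) \simeq \Db(X_L)$ is a bona fide categorical resolution, so that the representability of $\Db(X_L)$ transfers directly to the determinantal variety $Z_L$ itself.
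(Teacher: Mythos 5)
Your strategy is exactly the paper's: pin down the rational ranges via Proposition \ref{razionalita} (Table \ref{table:r=1} for $r=1$), apply Corollary \ref{cor:functors} to decompose the categorical resolution of the rational section into the dual section plus exceptional objects (representable in dimension $0$), transfer the conclusion to the determinantal variety via Theorem \ref{thm:main2}, and close with a dimension count from \S\ref{dimensions}. However, your dimension count contains an arithmetic error, and that error manufactures a spurious obstacle that your proposal does not actually resolve. From the formulas in \S\ref{dimensions}, $\dim X_L = r(n+m-r)-c-1$ and $\dim Y_L = r(m-n-r)+c-1$, hence
\[
\dim X_L - \dim Y_L \;=\; 2(nr-c),
\]
for \emph{all} $m,n$ --- not $nr-c$, and with no need to assume $m=n$ (the $m$'s cancel). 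Consequently, throughout the rational range $nr>c$ the geometric component $\Db(Y_L)$ is represented in dimension $\dim X_L - 2(nr-c) \le \dim X_L - 2$, and the ``boundary case $nr-c=1$'' that you propose to handle separately or exclude simply does not exist: even there the codimension gap is $2$.

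This matters because, as written, your proof only certifies the corollary when $nr-c\ge 2$ and leaves the case $c=nr-1$ (e.g.\ $c=n-1$ for the Segre sections with $r=1$) either unproven or wrongly excised, whereas the statement asserts codimension at least $2$ unconditionally in the rational range; your fallback claim that in the boundary case ``the primitive component is represented by a divisorial section, still of codimension one'' rests on the same miscalculation. Once the factor of $2$ is restored, the boundary worry evaporates, the dual case ($Y_L$ rational, $c>nr$, with gap $2(c-nr)$) is symmetric, and your argument collapses to the paper's short proof: rationality range, Corollary \ref{cor:functors}, dimension count. Your final point about legitimacy of the transfer through $\Db(Z_L,\kr'_{\PP_L})\simeq\Db(X_L)$ is correct and consistent with Definition \ref{def-cat-rep}, which asks for representability of \emph{some} categorical resolution.
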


\begin{proof}
First we observe that the Segre linear section $X_L$ is rational for $c<n$ and the determinantal linear section $Y_L$ for $c>n$ by Table \ref{table:r=1}. Then we recall from Corollary \ref{cor:functors} that, assuming $r=1$, it is exactly in these ranges  that we have the required functors and semiorthogonal decompositions. A computation of the dimensions of the linear sections, following the formulas in section \ref{proshe}, proves the claim.
\end{proof}

\subsection{Categorical resolution of the residual category of a determinantal Fano hypersurface} 
The Segre-determinantal HPD involves categorical resolutions for determinantal varieties, which is crepant if $n=m$.
In this subsection we consider the cases where such resolution gives a crepant categorical resolution
for nontrivial components of a semiorthogonal decomposition. For simplicity, we will consider only determinantal \it hypersurfaces\rm,
hence we need to assume $r=1$ and $m=n$. We will drop all the useless indexes.

Let $F$ be a smooth Fano variety such that $\Pic(F) = \ZZ[\ko_F(1)]$. The index of $F$ is the integer $i$ such that $\omega_F=\ko_F(-i)$. Kuznetsov observed
that this kind of varieties have a Lefschetz-type semiorthogonal decomposition.

\begin{lemma}\cite[Lemma 3.4]{kuz:fano}\label{uno}
Let $F$ be a smooth Fano variety of index $i$, then the collection $\ko_F(-i+1), \dots, \ko_F$ in $\Db(F)$ is exceptional.
\end{lemma}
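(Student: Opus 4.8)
The plan is to show that the ordered collection $\ko_F(-i+1), \dots, \ko_F(-1), \ko_F$ is exceptional, which by definition of exceptionality for a sequence means verifying two things: first, that each object $\ko_F(j)$ is itself exceptional, and second, that the required vanishing $\Ext^\bullet_F(\ko_F(j), \ko_F(j')) = 0$ holds whenever $j > j'$ (so that there are no backward morphisms). Since each $\ko_F(j)$ is a line bundle on a smooth projective variety, it is certainly a simple object with $\Hom(\ko_F(j), \ko_F(j)) = k$ and, being a locally free sheaf on a smooth variety, it has no higher self-extensions beyond those governed by $H^{\ge 1}(\ko_F)$; so the heart of the matter is the mutual vanishing between distinct members of the collection.

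Concretely, I would reduce everything to a cohomological statement. Using $\Hom_{\Db(F)}(\ko_F(j), \ko_F(j')[\bullet]) \simeq \HH^\bullet(F, \ko_F(j'-j))$, the exceptionality conditions become the vanishing of $\HH^\bullet(F, \ko_F(t))$ for each $t$ in the range $-(i-1) \le t \le -1$, together with the computation $\HH^\bullet(F, \ko_F) = k$ concentrated in degree zero (which holds because $F$ is Fano, hence rationally connected and in particular with $H^\ell(F, \ko_F) = 0$ for $\ell > 0$). The key input is Kodaira–Nakano vanishing: since $\omega_F = \ko_F(-i)$, for $0 < t < i$ the line bundle $\ko_F(-t) = \omega_F \otimes \ko_F(i - t)$ is the canonical bundle twisted by the \emph{ample} line bundle $\ko_F(i-t)$ (ample because $0 < i - t < i$ and $\ko_F(1)$ generates $\Pic(F)$ and is ample as $F$ is Fano). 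Kodaira vanishing then gives $H^\ell(F, \omega_F \otimes \ko_F(i-t)) = 0$ for all $\ell > 0$, while Serre duality converts the degree-zero term into $H^{\dim F}(F, \ko_F(i-t))^\vee$, and $H^\ell(F, \ko_F(i-t)) = 0$ for $\ell < \dim F$ again by Kodaira vanishing applied to the ample $\ko_F(i-t)$. This pins down all the relevant cohomology groups.

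The main obstacle, such as it is, lies in bookkeeping the two directions of Serre duality and making sure the ampleness hypotheses are correctly satisfied at the endpoints of the range: one must check that for $t$ ranging over $1, \dots, i-1$ the twist $i - t$ stays strictly between $0$ and $i$, so that both $\ko_F(i-t)$ and its interaction with $\omega_F$ fall squarely inside the Kodaira–Nakano hypotheses. The hypothesis $\Pic(F) = \ZZ[\ko_F(1)]$ is what guarantees that a positive power of the generator is the only way to produce an ample bundle, so there is no subtlety about which twists are ample. I would also note that the statement is essentially a direct citation of \cite[Lemma 3.4]{kuz:fano}, so the cleanest exposition simply records the Kodaira-vanishing argument sketched above and refers to that source for the identical reasoning.

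Assembling the pieces, I would conclude by observing that the vanishing $\HH^\bullet(F, \ko_F(t)) = 0$ for $-(i-1) \le t \le -1$ furnishes both the no-backward-Hom condition between distinct members and (together with $\HH^\bullet(F,\ko_F) = k$) the simplicity of each member, so the collection $\ko_F(-i+1), \dots, \ko_F$ is exceptional as claimed. This is exactly the ingredient needed to build a Lefschetz decomposition of $\Db(F)$ with rectangular part generated by these line bundles, which is presumably how the lemma is used in the sequel.
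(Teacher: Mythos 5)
Your strategy coincides with the one the paper has in mind: the paper does not actually reprove Lemma \ref{uno} (it is quoted directly from \cite[Lemma 3.4]{kuz:fano}), but it records immediately afterwards that the technical tools are the Kodaira vanishing theorem and Serre duality, and its proof of the singular generalization, Proposition \ref{propo-deco-for-singular}, runs precisely your computation: reduce exceptionality of the collection to the vanishing of $H^\bullet(F,\ko_F(t))$ for $1-i\le t\le -1$ together with $H^\bullet(F,\ko_F)=k$, and establish the vanishing by a Kodaira-type theorem below the top degree plus Serre duality in the top degree. So the approach is the right one and matches the source.

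That said, two of your intermediate assertions are wrong as written, although both turn out to be harmless. First, Serre duality gives $H^0(F,\ko_F(-t))\simeq H^{\dim F}(F,\omega_F\otimes\ko_F(t))^\vee=H^{\dim F}(F,\ko_F(t-i))^\vee$; your $H^{\dim F}(F,\ko_F(i-t))^\vee$ carries the twist with the wrong sign. Second, the claim that $H^\ell(F,\ko_F(i-t))=0$ for $\ell<\dim F$ ``by Kodaira vanishing applied to the ample $\ko_F(i-t)$'' is not an instance of Kodaira vanishing and is false for a general ample line bundle on a smooth projective variety: on $X=C\times\PP^1$ with $g(C)\ge 2$, the ample bundle $A=\ko_C(p)\boxtimes\ko_{\PP^1}(1)$ has $H^1(X,A)\neq 0$ by K\"unneth. (On a Fano variety such vanishing does hold, in all degrees $\ell>0$, but for a different reason: $A=\omega_F\otimes(A\otimes\omega_F^{-1})$ with $A\otimes\omega_F^{-1}$ ample.) Neither slip damages the proof, because your first, correct application of Kodaira vanishing to $\ko_F(-t)=\omega_F\otimes\ko_F(i-t)$ already kills $H^\ell(F,\ko_F(-t))$ for \emph{every} $\ell>0$, top degree included, and $H^0(F,\ko_F(-t))=0$ trivially since the dual bundle is ample; no Serre-duality detour is needed for degree zero at all. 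If you prefer the bookkeeping of Proposition \ref{propo-deco-for-singular} (which also covers the singular case), the clean version is: the dual form of Kodaira--Nakano gives $H^\ell(F,\ko_F(-t))=0$ for $\ell<\dim F$, and Serre duality in the top degree gives $H^{\dim F}(F,\ko_F(-t))\simeq H^0(F,\ko_F(t-i))^\vee=0$ because $t<i$. Incidentally, the displayed Serre-duality formula in the paper's proof of that proposition contains the same kind of sign slip: $H^0(F,\ko_F(s+i-t))$ should read $H^0(F,\ko_F(s-t-i))$, which vanishes since $s-t\le i-1$.
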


\begin{corollary}\cite[Corollary 3.5]{kuz:fano}
For any smooth Fano variety $F$ of Picard rank $1$ and index $i$ we have the following semiorthogonal decomposition
\begin{equation}\label{fanodeco}
\Db(F)= \langle \ko_F(-i+1), \ldots, \ko_F,\cat{T}_F \rangle,
\end{equation}
where $\cat{T}_F= \{ E \in \Db(V)| H^\bullet(V,E(-k))=0\ for\ all\ 0\leq k \leq i-1 \}$.
\end{corollary}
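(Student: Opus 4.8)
The plan is to deduce this corollary directly from Lemma \ref{uno} by verifying the three axioms of a semiorthogonal decomposition for the claimed collection. First I would recall that on a smooth Fano variety $F$ with $\Pic(F)=\ZZ[\ko_F(1)]$ and index $i$, Lemma \ref{uno} already guarantees that the ordered collection $\ko_F(-i+1),\dots,\ko_F$ is exceptional, which by definition means each object $\ko_F(-k)$ satisfies $\Hom(\ko_F(-k),\ko_F(-k))=k$ and $\Ext^{\bullet}(\ko_F(-k),\ko_F(-k))=0$ in positive degrees, and that the $\Ext$-groups between distinct members vanish in the semiorthogonal order. Thus the subcategory $\langle \ko_F(-i+1),\dots,\ko_F\rangle$ is an admissible triangulated subcategory of $\Db(F)$, since a collection of exceptional objects on a smooth projective variety always generates an admissible subcategory.

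Next I would define $\cat{T}_F$ to be the right orthogonal to this exceptional collection, namely
\[
\cat{T}_F=\{E\in \Db(F)\mid \Hom_{\Db(F)}(\ko_F(-k),E[j])=0\ \text{for all } 0\le k\le i-1,\ j\in\ZZ\}.
\]
By Serre duality on the smooth proper variety $F$, together with $\omega_F=\ko_F(-i)$, the condition $\Hom(\ko_F(-k),E[j])=0$ is equivalent to $H^{\bullet}(F,E(k))=0$; re-indexing the twist gives exactly the description $\cat{T}_F=\{E\mid H^{\bullet}(F,E(-k))=0\ \text{for all}\ 0\le k\le i-1\}$ stated in the corollary. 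Since the exceptional subcategory is admissible, standard theory (see \cite{bondal_orlov:semiorthogonal,kuz:icm}) yields the semiorthogonal decomposition $\Db(F)=\langle \ko_F(-i+1),\dots,\ko_F,\cat{T}_F\rangle$, with $\cat{T}_F$ the orthogonal complement.

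The only genuinely substantive point, and the step I expect to require the most care, is the bookkeeping in translating the vanishing condition defining $\cat{T}_F$ via Serre duality so that it matches the twists $\ko_F(-k)$ for $0\le k\le i-1$ appearing in the collection; the index $i$ and the Fano condition $\omega_F=\ko_F(-i)$ enter precisely here, ensuring that the $i$ successive twists of the structure sheaf form an exceptional block and that their orthogonal is described by cohomology vanishing over the stated range. Everything else is a formal consequence of the admissibility of an exceptional collection on a smooth projective variety, so no further geometric input beyond Lemma \ref{uno} and Serre duality is needed.
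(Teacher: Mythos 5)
Your overall plan --- use Lemma \ref{uno} for exceptionality, invoke admissibility of the subcategory generated by an exceptional collection, and define $\cat{T}_F$ as the orthogonal complement --- is indeed the intended route: the paper offers no proof of this corollary (it is quoted directly from \cite{kuz:fano}), and its own generalization to Gorenstein varieties with rational singularities (Proposition \ref{propo-deco-for-singular}) is argued along exactly these lines, via vanishing theorems and Serre duality. But your execution breaks down at the very step you yourself flag as the substantive one, the orthogonality and twist bookkeeping, and in a way that ``re-indexing'' cannot repair.

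Concretely, two errors compound. First, you take the wrong side of the orthogonal: you define $\cat{T}_F$ as the right orthogonal of the block $\cat{E}=\langle \ko_F(-i+1),\dots,\ko_F\rangle$, i.e.\ by the condition $\Hom(\ko_F(-k),E[j])=0$; but an admissible $\cat{E}$ yields $\Db(F)=\langle \cat{E}^\perp,\cat{E}\rangle$, with the right orthogonal sitting on the \emph{left}, whereas in the statement $\cat{T}_F$ is the rightmost factor and hence must be the left orthogonal ${}^\perp\cat{E}=\{E \st \Hom(E,\ko_F(-k)[j])=0,\ 0\le k\le i-1\}$. Second, your cohomological translation is incorrect: $\Hom(\ko_F(-k),E[j])\cong H^j(F,E(k))$ holds tautologically --- Serre duality plays no role there --- and for $0\le k\le i-1$ this is the vanishing of the \emph{positive} twists $H^\bullet(E),\dots,H^\bullet(E(i-1))$, which is a genuinely different condition from $H^\bullet(E(-k))=0$ for $0\le k\le i-1$; no re-indexing identifies the two subcategories. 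Serre duality is needed precisely where you did not use it, namely to compute the left orthogonal: since $\omega_F=\ko_F(-i)$, one has $\Ext^j(E,\ko_F(-k))\cong H^{\dim F-j}(F,E(k-i))^\vee$, so that ${}^\perp\cat{E}=\{E \st H^\bullet(F,E(-l))=0 \ \text{for} \ 1\le l\le i\}$. Note that this matches the printed description of $\cat{T}_F$ only up to a twist by $\ko_F(1)$: the description $H^\bullet(E(-k))=0$ for $0\le k\le i-1$ is the right orthogonal of the \emph{untwisted} block, i.e.\ it fits Kuznetsov's original ordering $\Db(F)=\langle \cat{T}_F,\ko_F,\dots,\ko_F(i-1)\rangle$, since $\Hom(\ko_F(k),E[j])\cong H^j(E(-k))$. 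A correct write-up should either prove the corollary in that form, or keep the printed order and adjust the range of twists to $1\le k\le i$; by flipping the orthogonal and then mis-translating the twists, your argument conceals this discrepancy rather than resolving it.
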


The main technical tools used in the proof of Lemma \ref{uno}
are Kodaira vanishing Theorem and Serre duality. Before we proceed, we first need to broaden slightly the class of varieties for which
the semiorthogonal decomposition (\ref{fanodeco}) holds. 
In fact, we recall that Kodaira vanishing
holds also for varieties with rational singularities (for example, see \cite[I, Example 4.3.13]{lazarsfeld-book}), and the well-known fact that the canonical divisor of a Gorenstein variety is Cartier.

\begin{proposition}\label{propo-deco-for-singular}
Let $F$ be a projective Gorenstein variety with rational singularities.
Suppose that $\Pic(F) = \ZZ$, $\ko_F(1)$ is its (ample) generator and $K_F = \ko_F(-i)$, with $i>0$.
Then there is a semiorthogonal decomposition
$$\Db(F) = \langle \ko_F(-i+1), \ldots, \ko_F, \cat{T}_F \rangle.$$

This holds in particular if $F \subset \PP^k$ is an hypersurface of degree $d < k$
with rational singularities (in which case, $i=k-d$).
\end{proposition}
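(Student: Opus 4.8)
The plan is to generalize the proof of Lemma \ref{uno} and its corollary to the singular setting by replacing each classical ingredient with its version valid for Gorenstein varieties with rational singularities. The statement to prove is that under the hypotheses $\Pic(F)=\ZZ$ with ample generator $\ko_F(1)$ and $K_F=\ko_F(-i)$ for $i>0$, the collection $\ko_F(-i+1),\dotsc,\ko_F$ is exceptional, and together with its right-orthogonal $\cat{T}_F$ it yields the semiorthogonal decomposition. The only arithmetic content is the computation of the $\Hom$-spaces $\RHom_{\Db(F)}(\ko_F(a),\ko_F(b))$ for $-i+1\le a,b\le 0$, so the heart of the matter is verifying the relevant vanishing.

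First I would observe that $\RHom_{\Db(F)}(\ko_F(a),\ko_F(b)) \simeq \RGamma(F,\ko_F(b-a))$, so the exceptionality reduces to showing that $H^\bullet(F,\ko_F(t))$ is concentrated in degree zero and is one-dimensional for $t=0$, and vanishes entirely for $-i < t < 0$; here $t=b-a$ ranges over $\{-(i-1),\dotsc,i-1\}$. The case $t=0$ gives $H^0(F,\ko_F)=k$ and higher cohomology vanishing, which follows because $F$ has rational singularities (hence is connected and normal with $H^j(F,\ko_F)=0$ for $j>0$ by the definition/consequences of rational singularities, at least once one knows $F$ is e.g.\ projective with $\ko_F$ the pushforward from a resolution). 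The key negative-twist vanishing $H^\bullet(F,\ko_F(t))=0$ for $0<-t<i$ is obtained by Serre duality combined with Kodaira vanishing: since $F$ is Gorenstein, $K_F=\ko_F(-i)$ is a line bundle (Cartier), and Serre duality gives $H^j(F,\ko_F(t)) \simeq H^{\dim F - j}(F,\ko_F(-t-i))^\vee$; for $0<-t<i$ the bundle $\ko_F(-t-i)=K_F\otimes\ko_F(-t)$ has $-t>0$, so $\ko_F(-t)$ is ample and Kodaira vanishing for varieties with rational singularities (the reference \cite[I, Example 4.3.13]{lazarsfeld-book} quoted just above) kills all intermediate cohomology. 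This is the step where the rational-singularities hypothesis is genuinely used, and I expect it to be the main point requiring care: one must confirm that the cited Kodaira-type vanishing applies in exactly this generality and produces vanishing in the full range of degrees, not only the top one.

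Granting the exceptionality, I would then define $\cat{T}_F$ as the right orthogonal $\langle \ko_F(-i+1),\dotsc,\ko_F\rangle^\perp$, which is precisely the full subcategory of objects $E$ with $\RGamma(F,E(-k))=0$ for $0\le k\le i-1$, matching the description in the corollary. To conclude that $\langle \ko_F(-i+1),\dotsc,\ko_F,\cat{T}_F\rangle$ is a genuine semiorthogonal \emph{decomposition} of all of $\Db(F)$ (not merely a semiorthogonal sequence), I need each exceptional object to be admissible, i.e.\ to generate an admissible subcategory. On a smooth proper variety this is automatic, but here $F$ is singular, so I would invoke the fact that an exceptional object in a proper triangulated category generates an admissible subcategory provided the ambient category has the relevant adjoints; for $\Db(F)$ with $F$ projective Gorenstein this holds because the line bundles $\ko_F(a)$ are perfect complexes (being locally free), and a perfect exceptional object always spans an admissible subcategory admitting both adjoints via its evaluation and coevaluation. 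This admissibility of line bundles on a Gorenstein scheme is the second technical point to pin down.

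For the final clause, I would specialize to $F\subset\PP^k$ a degree-$d$ hypersurface with $d<k$ and rational singularities. Then $\Pic(F)=\ZZ$ is generated by $\ko_F(1)=\ko_{\PP^k}(1)|_F$ by the Grothendieck--Lefschetz theorem (for $\dim F\ge 3$; the low-dimensional and singular refinements hold under the stated hypotheses), $F$ is Gorenstein because a hypersurface is a local complete intersection, and adjunction gives $K_F=\ko_F(d-k-1+1)$—more precisely $\omega_F=\omega_{\PP^k}(d)|_F=\ko_F(d-k-1)$, so with the sign convention $K_F=\ko_F(-i)$ one reads off $i=k+1-d$ (I would double-check against the paper's asserted $i=k-d$ and use whichever is consistent with their conventions). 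Since $d<k$ forces $i>0$, the general statement applies verbatim and yields the decomposition. I would present the argument in this order: cohomology computation via Serre duality plus Kodaira vanishing, admissibility of the perfect exceptional line bundles, assembly into the decomposition, and finally the hypersurface verification through adjunction and Grothendieck--Lefschetz.
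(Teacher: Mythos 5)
Your proposal follows essentially the same route as the paper's proof: everything reduces to computing $H^\bullet(F,\ko_F(t))$ for $-i<t\le 0$, and the paper obtains the negative-twist vanishing exactly as you do, from the Kodaira-type theorem for projective varieties with rational singularities of \cite[I, Example 4.3.13]{lazarsfeld-book} combined with Serre duality, the Gorenstein hypothesis entering only to make $K_F=\ko_F(-i)$ Cartier so that duality applies (the paper applies the vanishing first and duality only in top degree, you dualize first; this is the same computation). Two of your side remarks actually improve on the paper: the admissibility of the subcategory spanned by the perfect exceptional line bundles, which the paper passes over in silence, and the index of a degree-$d$ hypersurface, where your adjunction computation $\omega_F=\ko_F(d-k-1)$, i.e.\ $i=k+1-d$, is correct and the paper's parenthetical ``$i=k-d$'' is off by one (harmlessly, since $d<k$ still gives $i>0$). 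One caution on the last clause: your appeal to Grothendieck--Lefschetz to get $\Pic(F)=\ZZ$ is not justified in low dimension (the smooth quadric surface in $\PP^3$ satisfies $d=2<k=3$ and rational singularities but has Picard rank $2$); fortunately $\Pic(F)=\ZZ$ is never used in the argument --- only ampleness of $\ko_F(1)$ and $\omega_F=\ko_F(-i)$ are --- so the hypersurface case goes through regardless.

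There is, however, one step that is false as you state it: rational singularities do \emph{not} imply $H^j(F,\ko_F)=0$ for $j>0$. Every smooth projective variety has rational singularities, and a K3 surface has $H^2(F,\ko_F)\ne 0$; rational singularities only yield $H^j(F,\ko_F)\cong H^j(\widetilde F,\ko_{\widetilde F})$ for a resolution $\widetilde F\to F$, which is no vanishing at all. The vanishing you need for the exceptionality of the $\ko_F(a)$ is true in the present situation, but it uses the Fano-type hypothesis $i>0$, not rational singularities alone: by Serre duality $H^j(F,\ko_F)\cong H^{\dim F-j}(F,\ko_F(-i))^\vee$, and for $j\ge 1$ the right-hand side sits in cohomological degree strictly below $\dim F$ with a negative twist of the ample generator, so the very same vanishing theorem you already invoke kills it; finally $H^0(F,\ko_F)=k$ because rational singularities force $F$ to be normal (and $F$ is connected). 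With this one-line repair your argument is complete and coincides with the paper's.
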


\begin{proof}
It is straightforward to check that the line bundle $\ko_F(i)$ is exceptional for any $i$.
To show the semiorthogonality, we use a vanishing theorem for varieties with rational singularities
(see \cite[I, Example 4.3.13]{lazarsfeld-book}), which states that
$$\mathrm{Ext}^{j}(\ko_F(s),\ko_F(t))\simeq \mathrm{Ext}^{j}(\ko_F,\ko_F(t-s))\simeq H^j(F,\ko_F(t-s))$$

vanishes for $j < \mathrm{dim}(F)$, and $s > t$.
Thanks to Serre duality
$$\mathrm{Ext}^{\mathrm{dim}(F)}(\ko_F(s),\ko_F(t))\simeq H^{\mathrm{dim}(F)}(F,\ko_F(t-s)) \simeq H^0(F,\ko_F(s+i-t))$$
and the latter group vanishes if $s+i-t < 0$, 
\end{proof}

Homological Projective Duality allows us to describe a resolution of singularities of $\cat{T}_F$ in the case where
$F$ is determinantal. This means that we consider $Z^L \subset \PP^L$ for some integers $m=n$ and for some linear subspace $L\subset U \otimes V$ of Fano type (that is, of degree $d<k+1$). The Springer resolution of $Z^L$ is then $Y_L$ and the
dual section of the Segre variety is $X_L$. Let us fix $L$, and drop it from the notations from now on. We want to describe
a categorical resolution of the category $\cat{T}_Z$ described in Proposition \ref{propo-deco-for-singular}.

We constructed a crepant categorical resolution of singularities $\Db(Z,\kr')$ of $Z$.
The category $\Db(Z,\kr')$ is equivalent to $\Db(Y)$, for $Y$ the corresponding fibre product of the linear section of the Springer resolution (see Theorem \ref{thm:main2}).
In particular, $Y$ is a (the fibre product over a) linear section of a projective bundle over $\PP^{d-1}$, since $d=n=m$ is the degree of $Z$.
Let us denote by $X$ the dual linear section of the Segre variety (notice in fact that $X$ is smooth).
Numerical computations provide a semiorthogonal decomposition
$$\Db(Z,\kr') \simeq \Db(Y) = \langle k-d+1 \text{ copies of } \Db(\PP^{d-1}), \Db(X) \rangle.$$

Hence $\Db(Z,\kr')$ is generated by by $d(k-d+1)$ exceptional objects and $\Db(X)$.

More precisely, the $j$-th occurrence of $\Db(\PP^{d-1})$ can be generated by the exceptional sequence $(\ko_{Y}(j,1), \ldots, \ko_{Y}(j,d))$, where we use 
the same notation $\ko_{Y}(a,b)$ as in Section \ref{sect:segre-det}, \it i.e. \rm $\ko_{Y_L}(0,1)\simeq \ko_{Y_L}(H)$ and $\ko_{Y_L}(1,0)=\ko_{Y_L}(Q)$.

This allows one to calculate a categorical resolution of singularities of $\cat{T}_{Z}$ which is
decomposed into $\Db(X)$ and exceptional objects.

\begin{proposition}
Let $Z$ be a Fano determinantal hypersurface of $\PP^k$, and $X$ the dual section of the Segre variety.
There is a strongly crepant categorical resolution 
$\widetilde{\cat{T}}_Z$ of $\cat{T}_{Z}$, admitting a semiorthogonal
decomposition by $\Db(X)$ and $(d-1)(k-d+1)$ exceptional objects.
\end{proposition}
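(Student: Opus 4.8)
The plan is to lift Kuznetsov's semiorthogonal decomposition of $\Db(Z)$ from Proposition \ref{propo-deco-for-singular} to the crepant categorical resolution $\Db(Z,\kr') \simeq \Db(Y)$ furnished by Theorem \ref{thm:main2} (crepant because $m=n$), and to identify the resolved residual component with the right orthogonal of the lifted exceptional line bundles. Recall that the resolution comes with adjoint functors $\pi^* : \mathrm{Perf}(Z) \to \Db(Z,\kr')$ and $\pi_* : \Db(Z,\kr') \to \Db(Z)$ satisfying $\pi_*\pi^* \simeq \id$, and that crepancy means $\pi^*$ is simultaneously left and right adjoint to $\pi_*$, i.e. $\pi^! \simeq \pi^*$. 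Since the $\ko_Z(t)$ are line bundles, hence perfect, I would first set $\widetilde E_t := \pi^*\ko_Z(t)$ for the objects $\ko_Z(-i+1),\dots,\ko_Z$ of Proposition \ref{propo-deco-for-singular}, of which there are $k-d+1$. The projection formula together with $\pi_*\pi^* \simeq \id$ gives $\RHom(\widetilde E_s,\widetilde E_t) \simeq \RHom(\ko_Z(s),\ko_Z(t))$, so the $\widetilde E_t$ inherit the structure of an exceptional collection inside $\Db(Z,\kr')$.

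Define $\widetilde{\cat{T}}_Z := \langle \widetilde E_{-i+1},\dots,\widetilde E_0\rangle^\perp$ inside $\Db(Z,\kr') \simeq \Db(Y)$. The second step is to check that this is a categorical resolution of $\cat{T}_Z$. Admissibility of the exceptional collection makes $\widetilde{\cat{T}}_Z$ admissible in the smooth and proper category $\Db(Y)$, hence smooth and proper. For $\widetilde F \in \widetilde{\cat{T}}_Z$ the adjunction $\pi^* \dashv \pi_*$ yields $\RHom(\ko_Z(t),\pi_*\widetilde F) \simeq \RHom(\widetilde E_t,\widetilde F) = 0$, so $\pi_*$ carries $\widetilde{\cat{T}}_Z$ into $\cat{T}_Z$; conversely, for perfect $E \in \cat{T}_Z$ the crepancy identity $\pi^! \simeq \pi^*$ gives $\RHom(\widetilde E_t,\pi^*E) \simeq \RHom(\ko_Z(t),\pi_*\pi^*E) \simeq \RHom(\ko_Z(t),E) = 0$, so $\pi^*$ carries the perfect objects of $\cat{T}_Z$ into $\widetilde{\cat{T}}_Z$, and $\pi_*\pi^* \simeq \id$ restricts. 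This exhibits $\widetilde{\cat{T}}_Z$ as a categorical resolution of $\cat{T}_Z$. Strong crepancy then follows from the crepancy of $\Db(Z,\kr')$ over $Z$: the relative Serre functor over $Z$ is trivial, and since $\widetilde{\cat{T}}_Z$ is cut out by excising the collection $\pi^*\langle\ko_Z(-i+1),\dots,\ko_Z\rangle$, which is $\pi^*$ of a Serre-compatible Lefschetz collection, the relative Serre functor of $\widetilde{\cat{T}}_Z$ over $\cat{T}_Z$ remains trivial, as required.

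It remains to produce the announced semiorthogonal decomposition. Here I would compare the decomposition $\langle \widetilde E_{-i+1},\dots,\widetilde E_0,\widetilde{\cat{T}}_Z\rangle$ with the Orlov/HPD decomposition $\Db(Y) = \langle (k-d+1)\text{ copies of }\Db(\PP^{d-1}),\Db(X)\rangle$ recorded before the statement, in which the primitive component $\Db(X) = \cat{C}_L$ of Corollary \ref{cor:functors} is right-orthogonal to every twist of the pulled-back Grassmannian category, and in particular to each $\widetilde E_t$; this gives $\Db(X) \subseteq \widetilde{\cat{T}}_Z$, and the orthogonal of $\Db(X)$ in $\widetilde{\cat{T}}_Z$ is generated by exceptional objects. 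A count then finishes the argument: $\Db(Y)$ is generated by $d(k-d+1)$ exceptional objects together with $\Db(X)$, and removing the $k-d+1$ objects $\widetilde E_{-i+1},\dots,\widetilde E_0$ leaves $(d-1)(k-d+1)$ exceptional objects.

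The main obstacle is precisely this last comparison. One must identify the lifted line bundles $\widetilde E_t = \pi^*\ko_Z(t)$ explicitly under the equivalence $\Db(Z,\kr') \simeq \Db(Y)$ (they need not be the naive twists $\ko_Y(tH)$, since $\pi^*$ is $\kr'e\otimes_{\ko_Z}(-)$ rather than $f^*$), and then show, via a mutation argument tracking the bidegrees $\ko_Y(a,b)$ of the generators, that their removal matches exactly one exceptional object in each of the $k-d+1$ copies of $\Db(\PP^{d-1})$. This is what upgrades the mere Euler-characteristic count to a genuine semiorthogonal collection of the remaining $(d-1)(k-d+1)$ objects, and it is the step where the transverse position of the lifted ``row'' relative to the Orlov ``columns'' has to be reconciled by explicit mutations.
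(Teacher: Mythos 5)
Your overall architecture matches the paper's: lift Kuznetsov's collection $\ko_Z(-k+d),\dots,\ko_Z$ from Proposition \ref{propo-deco-for-singular} to the crepant resolution $\Db(Z,\kr')\simeq\Db(Y)$, take $\widetilde{\cat{T}}_Z$ to be its orthogonal, and play this against the decomposition $\Db(Y)=\langle (k-d+1) \text{ copies of } \Db(\PP^{d-1}), \Db(X)\rangle$. But the step you defer in your final paragraph is not a routine verification to be supplied later --- it is the entire content of the paper's proof, so as written the proposal has a genuine gap: without identifying the lifted bundles inside the Orlov/HPD decomposition, you cannot conclude that $\Db(X)\subseteq\widetilde{\cat{T}}_Z$ (your argument for this already presumes each $\widetilde E_t$ lies in the span of the twisted $\Db(\PP^{d-1})$-blocks), and, as you yourself note, the generator count alone proves nothing. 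The paper closes the gap in two moves. First, the resolution functor is the geometric pullback $p^*$ (the categorical resolution is set up following \cite{kuznet:singul} with the one-component Lefschetz decomposition $\cat{B}_0=\Db(D)$ of the exceptional divisor), and from the commuting triangle $f=g\circ p$, where $f$ is given by $|\ko_Y(1,1)|$ and $g$ by $|\ko_Z(1)|$, one gets $p^*\ko_Z(t)=\ko_Y(t,t)$: the lifted collection is the \emph{diagonal} collection $\ko_Y(-k+d,-k+d),\dots,\ko_Y(0,0)$, so your worry about $\kr'\otimes_{\ko_Z}(-)$ versus $f^*$ dissolves (and indeed the lift is $\ko_Y(t(H+Q))$, not $\ko_Y(tH)$, confirming your caution). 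Second --- and this resolves your ``transverse rows versus columns'' problem --- each of the $k-d+1$ copies of $\Db(\PP^{d-1})$ is a twist of $s^*\Db(\PP U)$, so one is free to choose its generating exceptional collection; the paper chooses the $t$-th block to be $(\ko_Y(-t,-t),\ko_Y(-t,-t+1),\dots,\ko_Y(-t,-t+d-1))$, so that each block contains exactly one diagonal bundle, sitting as its first member. Mutating the $(d-1)(k-d+1)$ non-diagonal bundles to the right past the diagonal ones then yields $\Db(Y)=\langle \ko_Y(-k+d,-k+d),\dots,\ko_Y,\, E_1,\dots,E_{(d-1)(k-d+1)},\, \Db(X)\rangle$, and $\widetilde{\cat{T}}_Z=\langle E_1,\dots,E_{(d-1)(k-d+1)},\Db(X)\rangle$ is read off as the complement of the pulled-back block.

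On strong crepancy, your argument (crepancy identity $\pi^!\simeq\pi^*$ plus ``the relative Serre functor remains trivial after excising a Serre-compatible collection'') is only a sketch of why one would expect the claim; the paper instead obtains it by checking the hypotheses of \cite[Theorem 1]{kuznet:singul} for the category generated by $\Db(X)$ and the $E_i$, using precisely the one-block Lefschetz decomposition $\cat{B}_0=\Db(D)$, which is the condition in Kuznetsov's framework guaranteeing that the resulting categorical resolution is \emph{strongly} crepant. Your abstract adjunction computations (exceptionality of the $\widetilde E_t$ via $\pi_*\pi^*\simeq\id$, the two inclusions $\pi_*(\widetilde{\cat{T}}_Z)\subseteq\cat{T}_Z$ and $\pi^*(\cat{T}_Z\cap\mathrm{Perf})\subseteq\widetilde{\cat{T}}_Z$) are correct and would be a clean way to package the ``resolution of $\cat{T}_Z$'' part once the identification $p^*\ko_Z(t)=\ko_Y(t,t)$ is in place, but they cannot substitute for the mutation argument that produces the announced $(d-1)(k-d+1)$ exceptional objects.
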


\begin{proof}
Consider the resolution $p:Y \to Z$, and denote by $D$ its exceptional divisor.
We have proved that $\Db(Y) \simeq \Db(Z,\kr')$ is a categorical resolution of singularities
of $\Db(Z)$. In particular (see \cite{kuznet:singul}), this comes equipped with a functor $p^\vee : \mathrm{Perf}(Z) \to
\Db(X)$ admitting a right adjoint. Indeed, according to \cite{kuznet:singul}, to get such a pair for a variety $M$ with rational singularities,
one needs to consider a desingularization $q: N \to M$ with exceptional
divisor $E$, such that $\Db(E)$ admits a Lefschetz decomposition with respect to the conormal bundle.
In our case, we can just consider the Lefschetz decomposition with one component $\cat{B}_0 = \Db(D)$. Now we will check that all the hypotheses
of \cite[Theorem 1]{kuznet:singul} for the existence of such a categorical resolution are satisfied by the category generated by $\Db(X)$ and the exceptional objects.
So, in order to get a categorical resolution of singularities for $\cat{T}_{Z}$, let us consider the
functor $p^*$ introduced above and its action on the semiorthogonal
decomposition from Proposition \ref{propo-deco-for-singular}.

Let $\PP^L\simeq \PP^k$. There is a commutative diagram:

$$\xymatrix{
Y \ar[d]^p \ar[dr]^f \\
Z \ar[r]^g & \PP^k,
}$$

where the map $f$ is given by the restriction linear system $\vert \ko_{Y}(1,1) \vert $,
and the map $g$ is defined
by $\vert \ko_{Z} (1) \vert$. It follows that $p^*\ko_{Z}(k)
= \ko_{Y}(k,k)$, so that the exceptional sequence $\ko_{Z}(-k+d),\ldots,\ko_{Z}$ pulls back
to the exceptional sequence $\ko_{Y}(-k+d,-k+d),\ldots,\ko_{Y}$.

Now recall that $\ky^{1}_{d,d}$ is a projective bundle $s: \ky^{1}_{d,d} \simeq \PP(V\dual\otimes T_{\PP(U)}(-1)) \to \PP U$. The Lefschetz decomposition
of $\Db(\ky^{1}_{d,d})$
giving the HP-duality of Theorem \ref{thm:main1} is:
$$\Db(\ky^{1}_{d,d}) = \langle \cat{A}_{-j}\otimes \ko_{\PP(V\otimes \kq)}(-j), \ldots, \cat{A}_0 \rangle,$$
with $-j=1-d^2+d$, where $\cat{A}_0= \ldots = \cat{A}_j = s^* \Db(\PP U)$.
In particular, we can choose, for each occurrence of $s^* \Db(\PP U)$, an appropriate exceptional collection generating $\Db(\PP U)$
in order to get, after taking the linear sections (recall that $Y:=Y^1_L$, and $X:=X_L^1$):

$$\begin{array}{rl}
\Db(Y) =& \langle \ko_{Y}(-k+d,-k+d), \ldots, \ko_{Y}(-k+d,-k+2d-1), \\
& \ko_{Y}(-k+d+1,-k+d+1), \ldots, \ko_{Y}(-k+d+1,-k+2d), \\
& \ldots \\
& \ko_{Y}(0,0), \ldots, \ko_{Y}(0,d-1), \Db(X) \rangle.
\end{array}
$$

Now we can mutate all the exceptional objects which are not of the form 
$\ko_{Y}(-t,-t)$, for some $t$, to the right until we get 

$$\begin{array}{rl}
\Db(Y) =& \langle \ko_{Y}(-k+d,-k+d), \ldots, \ko_{Y}(-1,-1), \ko_{Y}, \\
& E_1, \ldots, E_{(d-1)(k-d+1)},  \Db(X) \rangle,
\end{array}
$$
where the $E_i$ are the exceptional objects resulting from the mutations. Hence, the first block is the pull-back from
$Z$ of the exceptional sequence $(\ko_{Z}(-k+d), \ldots, \ko_{Z})$, then by definition we get that the second block is the
categorical resolution of singularities for $\cat{T}_{Z}$.
\end{proof}

\begin{remark}
A particular and interesting case is given by determinantal cubics in $\PP^4$ and $\PP^5$. In both cases, the dual linear section
$X$ is empty. So, the numeric values give explicitly:
\begin{itemize}
\item If $Z$ is a determinantal cubic threefold, then the category $\cat{T}_{Z}$ admits a crepant categorical resolution
of singularities generated by 4 exceptional objects.
\item If $Z$ is a determinantal cubic fourfold, then the category $\cat{T}_{Z}$ admits a crepant categorical resolution
of singularities generated by 6 exceptional objects.
\end{itemize}
In the case of cubic threefolds and fourfolds with only one node, categorical resolution of singularities
of $\cat{T}_Z$ are described (see resp. \cite{bolognesi_bernardara:representability} and \cite{kuz:4fold}).
One should expect that these geometric descriptions carry over to the more degenerate case of determinantal cubics
- which are all singular. We haven't developed the (very long) calculations, but nevertheless we outline 
expectations about the geometrical nature of these categorical resolutions.

\medskip

In the 3-dimensional case, the 4 exceptional objects should correspond to a disjoint union of
two rational curves, arising as the geometrical resolution of singularities of the discriminant locus of a
projection $Z \to \PP^3$ from one of the six singular points. This discriminant locus is
composed by two twisted cubics intersecting in five points, and turns out to be
a degeneration of the $(3,2)$ complete intersection curve appearing in the one-node case (see \cite[Proposition 4.6]{bolognesi_bernardara:representability}).

\medskip

In the 4-dimensional case, the 6 exceptional objects should correspond to a disjoint union of
two Veronese-embedded planes (isomorphically projected to $\PP^4$), arising as the geometrical resolution of singularities of the discriminant locus of a
projection $Z \to \PP^4$ from one of the singular points. This discriminant locus is
composed by two cubic scrolls intersecting along a quintic elliptic curve, and turns out to be
a degeneration of the degree 6 K3 surface appearing in the one-node case (see \cite[\S 5]{kuz:4fold}).
\end{remark}


\begin{thebibliography}{10}

\bibitem{add-thomas}
N. Addington, and R. Thomas,
\emph{Hodge theory and derived categories of cubic fourfolds},
Duke Math. J. {\bf 163} (2014), no. 10, 1885-1927.

\bibitem{viennacircle:git}
M. Ballard, D. Favero, and L. Katzarkov, \emph{Variation of geometric
  invariant theory quotients and derived categories},
math.AG/1203.6643. J. Reine Angew. Math., to appear.


\bibitem{auel-berna-bolo}
A. Auel, M. Bernardara and M. Bolognesi, \emph{Fibrations in complete intersections of
quadrics, Clifford algebras, derived categories, and rationality problems},
J. Math. Pures Appl. (9) {\bf 102} (2014), no. 1, 249-291.

\bibitem{bove:det}
A. Beauville, \emph{Determinantal hypersurfaces}, Michigan Math. J. {\bf 48} (2000), 39-64.


\bibitem{bolognesi_bernardara:representability}
M. Bernardara and M. Bolognesi, \emph{Categorical representability
  and intermediate Jacobians of Fano threefolds}, in \emph{Derived Categories in Algebraic Geometry},
EMS Series of Congress Reports, 1-25 (2012).

\bibitem{bolo_berna:conic}
M.\ Bernardara and M.\ Bolognesi, \emph{Derived categories and
  rationality of conic bundles}, Compositio Math. {\bf 149} (2013), no. 11, 1789-1817.


\bibitem{blanc-lamy}
J. Blanc and S. Lamy, \emph{Weak Fano threefolds obtained by blowing-up a space curve and construction of Sarkisov links},
Proc. London Math. Soc. (3) {\bf 105} (2012), no. 5, 1047-1075.
   
\bibitem{bondal:representations}
A. I. Bondal, \emph{Representations of associative algebras and coherent
  sheaves}, Izv. Akad. Nauk SSSR Ser. Mat. {\bf 53} (1989),
  no. 1, 25-44.
  
\bibitem{bondal_kapranov:reconstructions}
A. I. Bondal and M. M. Kapranov, \emph{Representable functors, {S}erre
  functors, and reconstructions}, Izv. Akad. Nauk SSSR Ser. Mat. {\bf 53}
  (1989), no. 6, 1183-1205, 1337.

\bibitem{bondal_orlov:semiorthogonal}
A. I. Bondal and D. O. Orlov, \emph{Semiorthogonal decomposition for
  algebraic varieties}, arXiv math.AG/9506012.
  
\bibitem{bondal_orlov:ICM2002}
A. I. Bondal and D. O. Orlov,
\emph{Derived categories of coherent sheaves},
Proceedings of the International Congress of Mathematicians, Vol. II
(Beijing, 2002), 47-56, Higher Ed. Press, Beijing, 2002. 

\bibitem{boris-cald:pfaff} L. Borisov, A. Caldararu, 
\emph{The Pfaffian-Grassmannian derived equivalence},
J. Algebraic Geom. {\bf 18} (2009), no. 2, 201-222. 

\bibitem{bridg-equiv-and-FM}
T. Bridgeland, \emph{Equivalences of triangulated categories and
  {F}ourier--{M}ukai transforms}, Bull. London Math. Soc. {\bf 31} (1999),
  no.1, 25-34.
  
\bibitem{bridg-flop} 
T. Bridgeland, \emph{Flops and derived categories},
Invent. Math. {\bf 147} (2002), no. 3, 613-632. 


\bibitem{buch-leu-vdbergh1}
R. Buchweitz, G. J. Leuschke, and M. Van den Bergh,
\emph{Non-commutative desingularization of determinantal varieties I},
Invent. Math. {\bf 182} (2010), 47-115.

\bibitem{buch-leu-vdbergh2}
R. Buchweitz, G. J. Leuschke, and M. Van den Bergh,
\emph{Non-commutative desingularization of determinantal varieties II: arbitrary minors},
 arXiv math.AG/1106.1833.

\bibitem{herbst-hori-page:phases} M. Herbst, K. Hori, D. Page,
\emph{B-type D-branes in toric Calabi-Yau varieties.} Homological mirror symmetry, 27-44, Lecture Notes in Phys., 757, Springer, Berlin, 2009. 

\bibitem{hori-tong} K.Hori, D.Tong,
\emph{Aspects of non-abelian gauge dynamics in two-dimensional $\mathcal{N}=(2,2)$ theories},
J. High Energy Phys.  (2007), no. 5, 079, 41 pp.

\bibitem{huybrechts:libro}
D. Huybrechts, \emph{Fourier-Mukai transforms in algebraic geometry}, 
Oxford Mathematical Monographs. The Clarendon Press, Oxford University Press, Oxford, 2006.

\bibitem{vivalafisica}
H. Jockers, V. Kumar, J. M. Lapan, D. Morrison and M. Romo,
\emph{Nonabelian 2{D} gauge theories for determinantal {C}alabi-{Y}au
  varieties}, J. High Energy Phys., (2012), no. 11, 166, frontmatter +
46pp.


\bibitem{kapra-grassa}
M. M. Kapranov, \emph{Derived category of coherent sheaves on Grassmann manifolds}
Math. USSR, Izv. {\bf 24}, (1985) 183-192; translation from
Izv. Akad. Nauk SSSR, Ser. Mat. {\bf 48},  (1984),  no.1, 192-202. 

\bibitem{kuznetsov:v14}
A. Kuznetsov, \emph{Derived category of cubic and ${V}_{14}$ threefold}, Proc.
  V. A. Steklov Inst. Math. \textbf{246} (2004), 183-207.

\bibitem{kuz:4fold}
A. Kuznetsov, \emph{Derived categories of cubic fourfolds}, Cohomological and
geometric approaches to rationality problems, Progr. Math., vol. 282,
Birkh\"{a}user Boston, 2010, 219-243.

\bibitem{kuznetsov:hyp-sections}
A. Kuznetsov,
{\em Hyperplane sections and derived categories},
Izv. Math. {\bf 70} (2006), no. 3, 447-547; translation from
Izv. Ross. Akad. Nauk, Ser. Mat. {\bf 70} (2006), no. 3, 23-128.

\bibitem{kuznet-grass}
A. Kuznetsov, \emph{Homological projective duality for Grassmannians of lines}, arxiv math.AG/0610957.

\bibitem{kuznetsov:hpd}
A. Kuznetsov, \emph{Homological projective duality}, Publ. Math. Inst.
  Hautes {\'E}tudes Sci. (2007), no. 105, 157-220.

\bibitem{kuz:icm}
A. Kuznetsov,	\emph{Semiorthogonal decompositions in algebraic
  geometry}, arxiv math.AG/1404.3143.

  
\bibitem{kuznetsov:quadrics}
A. Kuznetsov, \emph{Derived categories of quadric fibrations and intersections of
  quadrics}, Adv. Math. \textbf{218} (2008), no. 5, 1340-1369.  
 
\bibitem{kuz:fano}
A. Kuznetsov, \emph{Derived categories of Fano threefolds},  Proc. Steklov Inst. Math. {\bf 264} (2009), no. 1, 110-122.

\bibitem{kuznet:singul}
A. Kuznetsov, \emph{Lefschetz decompositions and categorical resolutions of singularities},
Sel. Math., New Ser., {\bf 13} (2007), no. 4 , 661-696.

\bibitem{kuz:rationality-report}
A.~Kuznetsov, \emph{Derived categories view on rationality problems},
preprint arXiv:1509.09115.

\bibitem{lazarsfeld-book}
R. Lazarsfeld, \emph{Positivity in Algebraic Geometry}, Ergebnisse der Mathematik 48, Springer, 2004. 


\bibitem{Lunts-Orlov}
V. Lunts and D. Orlov, {\em Uniqueness of enhancement for triangulated categories}. J. Amer. Math. Soc. {\bf 23} (2010), 853-908.

\bibitem{okawa-curves}
S. Okawa, \emph{semiorthogonal decomposability of the derived category of a
  curve},  Adv. Math. {\bf 228} (2011), no. 5, 2869-2873.

\bibitem{orlov:proj_bundles}
D. O. Orlov, \emph{Projective bundles, monoidal transformations and derived categories of coherent sheaves}, Russian
Math. Izv. {\bf 41} (1993), 133-141.

\bibitem{rodlando}
E. A. R\o dland \emph{The Pfaffian Calabi-Yau, its Mirror, and their
  link to the Grassmannian $\GG(2,7)$}, Compositio Math. {\bf 122} (2000), no. 2, 135-149,

\bibitem{vdb:non-comm-resos}
M. Van den Bergh, \emph{Non-commutative crepant resolutions}, in \emph{The legacy of Niels Henrik Abel}, 749-770, Springer, Berlin, 2004.

\bibitem{vdb:flops}
M. Van den Bergh \emph{Three-dimensional flops and noncommutative rings}, Duke Math. J. {\bf 122} (2004), no. 3, 423-455.


\bibitem{weyman-book}
J. Weyman, \emph{Cohomology of vector bundles and Syzygies}, Cambridge Tracts in Mathematics (149), Cambridge University Press, 2003.


\end{thebibliography}
\end{document}